\newtheorem{thm}{Theorem}[section]
\newtheorem{lem}[thm]{Lemma}
\newtheorem{prop}[thm]{Proposition}
\newtheorem{corr}[thm]{Corollary}
\newtheorem{fact}[thm]{Fact}
\theoremstyle{definition}
\newtheorem{remark}[thm]{Remark}
\theoremstyle{definition}
\newtheorem{defin}[thm]{Definition}
\theoremstyle{definition}
\newtheorem{secc}[thm]{}
\theoremstyle{definition}
\newtheorem{comm}[thm]{Comment}
\newtheorem{example}[thm]{Example}
\def\ZZ{\mathbb Z}
\begin{document}

\title { {\hspace{12mm}} F-Noetherian Rings and Skew $\newline$  Quantum Ring Extensions}

\vspace{15mm}
\vspace{15mm}
\author{Nazih Nahlus}
\date{}
\maketitle
\vspace{6mm}
\begin{center}\emph{Dedicated to George Bergman and Donald Passman}\end{center}
\vspace{3em}

\begin{abstract}
A ring $R$ shall be called F-noetherian if every finite subset of $R$ is contained in a (left and right) noetherian subring of $R$. 
F-noetherian rings have many interesting linear algebra properties which we refer to as the full strong rank condition, fully stably finite, and more generally the basic condition.  We also study some basic ring-theoretic properties of F-noetherian rings such as localizations of F-noetherian rings. The F-noetherian property is preserved under some  \emph{skew} quantum ring extensions
 including some iterated Ore extensions and some quantum almost-normalizing extensions. For example, let $R= S[ x_1, \dots, x_n ]$ be a finitely generated ring \textit {over a subring $S$} such that  (1) for 
\textbf{$i < j$} ,  
 \[ x_j x_i -q_{ji} x_i x_j  \in S [ x_1, ..., x_{j-1}] + Sx_j , \textit {or} \]
\[ x_j x_i -q_{ji} x_i x_j  \in S x_1+  ... + Sx_n + P_2 \] 
for some units $q_{ji} \in S$ and $P_2$  is a certain set of  quadratic polynomials (related to the quantum group $\mathcal{O}_q(G)$ where $G$ is a connected complex semisimple algebraic group), (2) for all $i$,
$ Sx_i +S= S+ x_i S, \text {and} $
(3) each $x_i$ commutes with a subring $A$ of $S$ such that $S$ is finitely generated 
as a ring over $A$. Then if $S$ is noetherian, or F-noetherian, or a direct limit of noetherian rings, so is $R$.
\end{abstract}
\section{Introduction} \label{intro}

In this paper we introduce the new notion of an F-noetherian ring and related notions.  A ring $R$ shall be called F-noetherian if every finite subset of $R$ is contained in a (left and right) noetherian subring of $R$.  For example, by the Hilbert Basis Theorem, every commutative ring is tightly F-noetherian  in the sense that every finite subset of $R$ generates a noetherian subring of $R$.  Tightly F-noetherian rings are directed F-noetherian in the sense that for every finite subset $A$  of $R$, there exists a noetherian subring $R(A)$ of $R$ containing $A$ such that if  $X \subset Y$ are finite subsets of $R$, then $R(X) \subset R(Y)$. It turns  out that  the directed F-noetherian rings 
are precisely  the direct limits of noetherian rings  by Proposition 8.3.

An F-noetherian ring $R$  has many interesting linear algebra properties.  For example,  
\begin{enumerate}[(i)]
\item if $M^m$ is embedded in $M^n$ and $M$ is a finitely generated non-zero $R$-module, then $m\le n$.   More generally, for (right or left) $R$-modules $M$ and $N$ where $M$ is finitely generated, $M \oplus N$ can not be imbedded in $M$ unless $N=0$, or equivalently, every monomorphism of a finitely generated $R$-module $M$ has an essential image. For convenience, this condition will be referred to as the full SRC (strong rank condition) 
\item every epimorphism $ M \rightarrow M$ of a finitely generated $R$-module $M$ is an isomorphism. That is, every finitely generated (right or left) $R$-module $M$ is Hopfian. For convenience, this condition will be referred to as the fully stably finite condition.
\end{enumerate}

The above two linear algebra properties are special cases of the following more general property.
Every F-noetherian ring $R$ is \textbf{\emph{basic}} in the sense that if we have an epimorphism
$ f : A \rightarrow M$ and a monomorphism $i: A \rightarrow M$ �of (right or left) $R$-modules and $M$ is a finitely generated $R$-module, then $f$ is an isomorphism (and $i(A)$ is essential in $M$). This property was suggested to us by an exercise on commutative and noetherian rings, in [L 3, Exercise 1.10] which is taken from p. 61 of a book entitled "Commutative Noetherian rings and Krull Rings" by Balcerzyk and J\'{o}sefiak. (See section 2).

\vspace{3mm}
In section 3, we study some ring-theoretic properties of F-noetherian rings. For example, we have the following.

\begin{remark}.
\begin {enumerate}[(i)]
\item F-noetherian rings and tightly F-noetherian rings are preserved under direct limits and homomorphic images. 
\item If $R$ and $S$ are tightly F-noetherian (resp. F-noetherian) rings, then so is $R\times S$.  
\item If $R$ is an F-noetherian domain, then $R$ is an Ore domain. 
\item Let $R$ be an F-noetherian ring and let $S$ be a multiplicative subset of $R$.  Suppose there exists a finite set $Q$ in $R$ such that  $sR'=R's$  for every element $s \in S$ and  every subring $R'$ of $R$ containing both $S$ and $Q$.  Then $S$ is a denominator set of $R$ and the localization ring $RS^{-1}$ is F-noetherian.    

\item If $R$ has a non-negative filtration whose associated graded ring gr$(R)$ is F-noetherian,  then the ring $R$ may fail to be F-noetherian. This failure may even occur if gr$(R)$ is commutative. 
\end{enumerate}
For more details about the above facts, see (3.6)--(3.9) and (5.10).
\end{remark}

\vspace{1mm}
In section 4, we study the preservation of the F-noetherian property under quantum iterated Ore extensions and Laurent ring extensions. We shall see that if $S$ is an F-noetherian ring, 
then its differential skew polynomial rings may \emph{fail}  to be F-noetherian. However, we have the following.
\begin{thm} Let $R = S[x_1; f_1, d_1][x_2; f_2, d_2] \dots [x_n; f_n, d_n]$  be an iterated Ore extension over a ring $S$  such that
\begin{enumerate}[(1)]
\item for each $j>i$,  $f_j(x_i) =� q_{ji}(x_i)$ for some unit $q_{ji}�\in S$,
\item each $f_i$ is the identity on $S$, 
\item we have one of the following cases.
\begin{description}
\item[Case 1:] For all i,  $d_i(S) \subset S$, and  \\
all $d_i$ are commuting locally nilpotent derivations on $S$.
\item[Case 2:] $S$ has an ascending chain of subsets $A_m$ with $m \in \mathbb{N}$, such that
$A_0=0, S$ is the union of all $A_m$, and
             $$d_i(A_m) \subset A_{m-1}$$  for all  $ m \in \mathbb{N}$ , and  $i=1,2, \dots n$.
\end{description}
\end{enumerate}
Then,
\begin{enumerate}[(i)]
\item if $S$ is F-noetherian, so is $R$, and
\item if $S$ is tightly F-noetherian, then $R$ is directed F-noetherian.
\end{enumerate}
Note that we have no restrictions on each $d_j (x_i)$  for $j>i$
\end{thm}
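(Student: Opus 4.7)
The plan is to show that for each finite subset $T \subset R$ there exists a noetherian subring of $R$ containing $T$; in part (ii) the construction will additionally be monotone in $T$, yielding directedness.

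First I would extract a finite ``coefficient pool'' $F_0 \subset S$: write each element of $T$ in the left-normal form $\sum s_\alpha\, x_1^{\alpha_1}\cdots x_n^{\alpha_n}$ and collect the coefficients $s_\alpha$; adjoin the units $q_{ji}$ and $q_{ji}^{-1}$; finally adjoin the $S$-coefficients of each $d_j(x_i)$ (for $i<j$), viewed inside the previously constructed sub-extension. Next, enlarge $F_0$ to a finite set $F \supseteq F_0$ that is closed under every $d_i$. In Case~1 the set $\{d_1^{k_1}\cdots d_n^{k_n}(a): a \in F_0,\ k_i \geq 0\}$ is finite because the $d_i$ commute and are locally nilpotent. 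In Case~2, pick $m$ with $F_0 \subset A_m$; since $d_{i_1}\cdots d_{i_m}(A_m)\subset A_0 = 0$, the set of all iterated $d$-images of $F_0$ of length strictly less than $m$ is finite.

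Next I would construct a noetherian subring $S_0 \subset S$ containing $F$ and closed under every $d_i$, then form the iterated Ore extension $R_0 := S_0[x_1; f_1, d_1]\cdots[x_n; f_n, d_n]$. By the universal property of Ore extensions this sits inside $R$ (each $f_i$ restricts to the identity on $S_0$, each $q_{ji}$ lies in $S_0$, and the coefficients of $d_j(x_i)$ lie in $S_0$), contains $T$, and is noetherian by the classical theorem that iterated skew polynomial rings over a noetherian base with the $f_i$ automorphisms remain noetherian. For part (ii), take $S_0 := \langle F \rangle$: tight F-noetherianity makes this subring noetherian, and the Leibniz rule together with the $d_i$-closure of $F$ makes it $d_i$-stable. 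Monotonicity of the whole construction in $T$ then gives directedness.

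The main obstacle is the construction of $S_0$ in part (i): when $S$ is only F-noetherian, the subring $\langle F \rangle$ need not be noetherian, so one cannot simply generate $S_0$ from $F$. The strategy is to embed $F$ in a noetherian subring of $S$ by F-noetherianity and then enlarge this subring to be $d_i$-stable, using the bounded iteration established in the second step to control the growth of the $d_i$-closure and keep it inside a noetherian ambient subring of $S$. This is the crux of (i) and is where the proof works harder than in the tight case.
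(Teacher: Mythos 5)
Your overall architecture is the same as the paper's: write the elements of a finite set $B\subset R$ in PBW normal form, pool the $S$-coefficients together with the $q_{ji}^{\pm 1}$ and the coefficients of the $d_j(x_i)$, close this finite pool under the derivations (finite by commuting local nilpotence in Case 1, and by the chain $A_m$ in Case 2), place it in a noetherian subring $S^*$ of $S$, and form $S^*[x_1;f_1,d_1]\cdots[x_n;f_n,d_n]$, which is noetherian by the skew Hilbert Basis Theorem; directedness in (ii) comes from the canonicity and monotonicity of this assignment with $S^*=\langle F\rangle$, which is $d_i$-stable by the Leibniz rule. Your part (ii) is essentially the paper's argument and is fine.

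The gap is exactly where you locate it, and your proposal does not close it. For (i) you need a noetherian subring $S_0\subseteq S$ containing the finite $d$-closed set $F$ that is moreover stable under every $d_i$; without $d_i(S_0)\subseteq S_0$ the ring $S_0[x_1;f_1,d_1]\cdots$ is not a well-defined subring of $R$, since $x_i s=s x_i+d_i(s)$ leaves the intended coefficient ring. F-noetherianity only yields some noetherian $S^*\supseteq F$, and nothing forces $d_i(S^*)\subseteq S^*$. Your sketched repair --- enlarging this subring to be $d_i$-stable while ``using the bounded iteration \dots to control the growth of the $d_i$-closure'' --- does not work as described: local nilpotence bounds iterated derivatives element by element, not uniformly over the (generally infinite) ring $S^*$; the $d$-closure of $S^*$ need not lie in any noetherian subring you control; and the largest $d$-stable subring of $S^*$ (the elements all of whose iterated derivatives stay in $S^*$) is merely a subring of a noetherian ring, hence need not be noetherian. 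For what it is worth, the paper's own proof of (i) takes exactly your route and simply asserts, after choosing an arbitrary noetherian $S^*\supseteq D(B'')$, that each $d_i$ preserves $S^*$; that assertion is automatic only when $S^*$ is the subring generated by the $d$-closed finite set, which is precisely the tightly F-noetherian situation of part (ii). So your instinct that (i) requires an extra idea is sound, but the idea is still missing from your proposal, and you would need to supply a genuine construction (or restrict to the situation of (ii)) to complete this step.
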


Similarly, in the case of skew-Laurent rings, if $S$ is F-noetherian, 
then $R=S[x, x^{-1}; f]$  may fail to be F-noetherian. However, we have

\begin{prop} Let $R=S[x_1, x_1^{-1}; f_1]\dots[x_n, x_n^{-1}; f_n]$ 
be an iterated skew-Laurent ring extension such that 
\begin{enumerate}[(1)]
\item for each $j>i$,��$f_j(x_i) =� q_{ji}(x_i)$ for some unit $q_{ji}�\in S$,
\item each $f_i$  is the identity on $S$.
\end{enumerate}
 If $S$ is F-noetherian, then so is $R$.
\end{prop}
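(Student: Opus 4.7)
The plan is to mimic the standard reduction of F-noetherianity to the noetherianity of a judiciously chosen subring. Given a finite subset $F \subseteq R$, I would first write each element of $F$ in its canonical form as a finite left-$S$-linear combination of Laurent monomials $x_1^{a_1}\cdots x_n^{a_n}$ with $a_i \in \mathbb{Z}$. These monomials form a free left $S$-basis of $R$, because each $f_i$ fixes $S$ pointwise and rescales each lower $x_k$ by a unit of $S$, so no cancellations among monomials can occur. Let $C \subseteq S$ be the finite collection of coefficients that arise in this canonical expansion.

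Using the F-noetherian hypothesis on $S$, I would then choose a noetherian subring $S_0 \subseteq S$ containing $C \cup \{q_{ji}^{\pm 1} : 1 \le i < j \le n\}$. Define inductively $R_0^{(0)} := S_0$ and $R_0^{(j)} := R_0^{(j-1)}[x_j, x_j^{-1}; g_j]$, where $g_j$ is the restriction of $f_j$ to $R_0^{(j-1)}$. The inclusion $g_j(R_0^{(j-1)}) \subseteq R_0^{(j-1)}$ holds because $g_j$ fixes $S_0$ and sends $x_i^{\pm 1}$ to $q_{ji}^{\pm 1} x_i^{\pm 1} \in R_0^{(j-1)}$ for $i < j$; the analogous argument for $f_j^{-1}$ shows that $g_j$ is actually a ring automorphism of $R_0^{(j-1)}$, so the next skew-Laurent step is legitimately defined. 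By the canonical form observation above, the natural ring homomorphism $R_0^{(n)} \to R$ is injective (the Laurent monomials are $S_0$-free since they are $S$-free), exhibiting $R_0^{(n)}$ as a subring of $R$ that contains $F$.

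It remains to verify that $R_0^{(n)}$ is noetherian, which follows by induction on $n$ from the classical fact that $B[x, x^{-1}; \sigma]$ is left- and right-noetherian whenever $B$ is left- and right-noetherian and $\sigma$ is an automorphism of $B$. The base $R_0^{(0)} = S_0$ is noetherian by choice, and each inductive step adds one skew-Laurent variable over an automorphism. Since $F$ was arbitrary, this proves $R$ is F-noetherian.

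The one genuine obstacle is ensuring that each $g_j$ restricts to an \emph{automorphism} of $R_0^{(j-1)}$, which is precisely the reason for forcing $q_{ji}^{\pm 1} \in S_0$. Everything else is bookkeeping, and the argument is structurally lighter than Theorem 1.2 because there are no derivations whose iterated evaluations $d_j(x_i)$ one has to absorb into $S_0$.
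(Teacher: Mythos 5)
Your proposal is correct and follows essentially the same route as the paper's proof of this proposition: collect the finitely many $S$-coefficients of the given elements together with all $q_{ji}^{\pm 1}$, place them in a noetherian subring $S_0$ of $S$, and observe that the resulting iterated skew-Laurent extension over $S_0$ is a well-defined noetherian subring of $R$ containing the given finite set. Your extra care in checking that each restricted $f_j$ is an automorphism of the previous stage, and that the $S_0$-span of Laurent monomials embeds in $R$, only makes explicit what the paper leaves implicit.
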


In section 5, we study the preservation of the F-noetherian property under many types of skew quantum ring extensions starting with almost centralizing extensions in the sense of Passman in [P]. The first general result in this section is Theorem 1.4 below (stated in  Theorem 5.4) in which the noetherian part (i) in case 3  is a simple generalization of  Proposition I.8.17 in  [B-G, p. 77] which is one of the key steps in proving that the quantum group  $\mathcal{O}_q(G)$  is noetherian (where $G$ is a connected complex semisimple algebraic group and $q$ is a fixed non-root of unity). Specifically, Proposition I.8.17 in [B-G] assumes (versus our generalization below) that each $x_i$ commutes with the elements of $S$, it assumes our $f=0$ and each $(x_j x_i� - q_{ji}� x_i  x_j)�=0$ if $i=1$. Finally, the noetherian part(i) in case 2, is a generalization of [L-R, Cor. 2.4] since we do not assume any PBW $S$-basis of $R$.  

\begin{thm}\label{5.4}
Let $R = S [ x_1, \dots , x_n ] $ be a finitely generated ring over a subring $S$ satisfying the following two  conditions.
\begin{enumerate}[(1)]
\item for all $i$, �$x_i S  + S = S + S x_i$.
\item for all \emph { $ j > i$},� there exist units $q_{ji}� \in S$ such that we have one of the following cases.
\begin{description}
\item[Case 1:] $(x_j x_i� - q_{ji}� x_i  x_j)� \in   S [ x_1, \dots, x_{j-1} ] + Sx_j $
\item[Case 1':] $ (x_j x_i� - q_{ji}� x_i  x_j)� \in   Sx_i + S [ x_{i+1}, \dots , x_n ] $

\item[Case 2:] $(x_jx_i� - q_{ji}� x_i x_j)�  \in  S + Sx_1+ \cdots + Sx_n$

\item[Case 3:]  $x_j x_i� - q_{ji} x_i x_j) = f + g$  where $f  \in S + Sx_1+ \cdots + Sx_n$ and if $i >1 $, $g$ is a finite sum of quadratic monomials  $s x_a x_b $ where $s \in S$ and either  $a$ or $b$ is at most $i-1$; however if $i=1$,  $g$ is a finite sum of quadratic monomials $s x_a x_b$ where $s \in S$ and $a=1$ and $b<j$ or vice versa ($b=1$ and $a<j$).

\end{description}
\end{enumerate}
Then in all cases, we have the following.
\begin{enumerate}[(i)]
\item if $S$ is noetherian, then so is $R$.
\item Now we suppose that each $x_i$ commutes with the elements  of a subring $A$ of $S$ such that $S$ is finitely generated as a ring over $A$.
\\ Then, if $S$ is F-noetherian, so is $R$.
\item Under the additional assumption in (ii),  if $S$ is directed  F-noetherian, then so is $R$.
\end{enumerate} 
\end{thm}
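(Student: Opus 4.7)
The plan is to prove (i) first by a filtered-graded argument, and then to reduce (ii) and (iii) to (i) by selecting, for each finite $F \subseteq R$, a noetherian subring of $R$ of the form $T[x_1,\ldots,x_n]$ with $T$ an appropriate noetherian subring of $S$.

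For (i), I would exhibit $R$ as an iterated extension $S = R_0 \subseteq R_1 \subseteq \cdots \subseteq R_n = R$ with $R_j = R_{j-1}[x_j]$, and then pass to an associated graded ring with cleaner commutation relations. In Cases 1, 1', and 2, the commutator $x_j x_i - q_{ji} x_i x_j$ has a low-order shape relative to a filtration in which $x_j$ dominates $x_1,\ldots,x_{j-1}$, so the correction terms vanish in $\mathrm{gr}(R)$, leaving the quantum-affine-type relations $x_j x_i = q_{ji} x_i x_j$ over $S$. Case 3 is more delicate because of the extra quadratic tail $g$, and follows the pattern of [B-G, Prop.~I.8.17]: a refined weighted filtration forces $g$ into a strictly lower stratum than $x_j x_i$, again yielding clean quantum relations in the graded ring. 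Noetherianness of $R$ then follows from that of $S$ by iterated application of the Hilbert basis theorem for Ore extensions and the standard lift from $\mathrm{gr}(R)$ to $R$.

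For (ii), given a finite $F \subseteq R$, I would write the elements of $F$ as $S$-linear combinations of monomials in the $x_i$ and collect their coefficients into a finite set $F_0 \subseteq S$. I would augment $F_0$ to include the units $q_{ji}^{\pm 1}$, the coefficients appearing in each commutator decomposition, a fixed finite set of ring-generators $s_1,\ldots,s_k$ of $S$ over $A$, the commutation data $s'_l, s''_l$ and $s^*_l, s^{**}_l$ from $x_i s_l = s'_l + s''_l x_i$ and $s_l x_i = s^*_l + x_i s^{**}_l$, and finally the $A$-letters appearing in word-expressions of all these data as elements of $A[s_1,\ldots,s_k] = S$. The crucial observation is that, because $x_i$ commutes pointwise with $A$, the commutation of $x_i$ past any word in $A \cup \{s_l\}$ is determined by iterated use of $x_i a = a x_i$ and $x_i s_l = s'_l + s''_l x_i$; the resulting $u,v$ in $x_i s = u + v x_i$ involve only the $A$-letters already present in the word of $s$ together with the finite set $\{s_l, s'_l, s''_l\}$. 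Any subring containing the augmented $F_0$ as generators is therefore closed under $x_i$-commutation on generators, and closure on the whole subring follows by an induction on word length in those generators.

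The main technical step, and the main obstacle, is to find a noetherian subring $T$ of $S$ that simultaneously contains the augmented $F_0$ and is closed under $x_i$-commutation: F-noetherianness of $S$ readily supplies a noetherian subring containing any given finite set, but the commutation-closure property is a separate constraint that must be reconciled, and this is where the finiteness of the commutation data---guaranteed by the commutation of $x_i$ with $A$ together with the finite generation of $S$ over $A$---becomes essential. Once such a $T$ is in hand, Part (i) applied to $T$ shows that $R_0 = T[x_1,\ldots,x_n]$ is a noetherian subring of $R$ containing $F$, proving (ii). For (iii), the construction $F \mapsto T_F$ is performed compatibly with inclusions of finite subsets, using the directed family of noetherian subrings implicit in the directed F-noetherian structure of $S$, yielding a directed family $\{T_F[x_1,\ldots,x_n]\}$ that witnesses directed F-noetherianness of $R$.
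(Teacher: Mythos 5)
Your part (i) follows the paper's strategy: the same case-by-case choice of weights (the standard filtration in Case 2, weights of the form $N^i$ in Case 1, reversed in Case 1$'$, and $2^n-2^{n-i}$ in Case 3 as in [B-G, Prop.\ I.8.17]) so that the correction terms fall into lower filtration strata and $\mathrm{gr}(R)$ is generated over $S$ by elements $y_i$ with $y_jy_i=q_{ji}y_iy_j$ and $y_iS+S=S+Sy_i$. But your concluding step, ``iterated application of the Hilbert basis theorem for Ore extensions,'' is not available here: no PBW $S$-basis is assumed (the paper stresses that avoiding this is the point of part (i)), and condition (1) is only an equality of sets, so there is no well-defined automorphism or skew derivation with which to present $R_{j-1}[x_j]$ over $R_{j-1}$, or $\mathrm{gr}(R)$ over $S$, as an iterated Ore extension or a quotient of one. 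What is actually needed is the paper's Lemma 5.3 --- a skew version of the almost-normalizing-extension theorem, proved following McConnell--Robson --- applied to $\mathrm{gr}(R)$, after which noetherianness lifts to $R$ by standard filtered-graded theory. So the filtration idea is right, but your (i) is missing its key lemma and replaces it with a theorem that does not apply.

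The more serious gap is in (ii). You assemble exactly the finite data the paper uses (the coefficients of $F$, the units $q_{ji}^{\pm1}$, the commutator coefficients, the generators $z_1,\dots,z_m$ of $S$ over $A$, the commutation constants coming from $x_iz_l\in Sx_i+S$ and $z_lx_i\in x_iS+S$, and the $A$-letters of word expressions of all of these), and your ``crucial observation'' is essentially the paper's Lemma 5.5; but you then declare the decisive step --- producing a noetherian subring $T\subseteq S$ that contains this data and satisfies $x_iT+T=T+Tx_i$ for all $i$ --- to be ``the main obstacle'' and continue with ``once such a $T$ is in hand.'' That step is the entire content of (ii): a noetherian subring supplied by F-noetherianness need not be commutation-closed, while the commutation-closed subring generated by your augmented data need not be noetherian, and you never reconcile the two. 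The paper does so by a definite construction: it takes a noetherian subring $A^*$ containing the collected $A$-coefficients $X(F)(A)$, forms $S^*=A^*[z_1,\dots,z_m]$ and $R^*=S^*[x_1,\dots,x_n]$, proves $x_iS^*+S^*=S^*+S^*x_i$ by Lemma 5.5 (an induction on word length in $A^*\cup\{z_1,\dots,z_m\}$, using that the $x_i$ commute with $A^*$), notes that all commutator coefficients and the $q_{ji}^{\pm1}$ lie in $S^*$, and then invokes part (i) over $S^*$ to get a noetherian subring of $R$ containing $F$. Without that construction your (ii) is unproved, and (iii), which consists of carrying out the same construction compatibly with inclusions $F_1\subseteq F_2$ of finite subsets, falls with it.
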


\vspace {1mm}
Then we mix the cases of Theorem 1.4  in two different ways that are stated in Theorems 5.8 and 5.9. 
For example, Theorem 5.8 says the following.

\begin{thm} Let  $G= k[t_1,  \dots , t_m,  x_1,  \dots , x_n]$  be a finitely generated ring over its subring k  such that
\begin{enumerate}[(i)]
\item For all $j > i$, there exist units $p_{ji} \in k$  such that   
\[t_j t_i  - p_{ji}  t_i t_j  \in   k[t_1, \dots, t_{j-1}] + kt_j\]
\item For all $j > i$, there exist units $q_{ji} \in k$  such that 
\begin{description}
\item[Case 1: ]   $(x_jx_i  - q_{ji}  x_i x_j)  \in   k[x_1, \dots, x_{j-1}] + kx_j $
\item[Case 2: ] $(x_j x_i  - q_{ji}  x_i  x_j)   \in  k + kx_1+ \cdots + kx_n$
\item[Case 3: ]  $(x_j x_i� - q_{ji} x_i x_j) = f + g$  where $f  \in S + Sx_1+ \cdots + Sx_n$ and if $i >1 $, $g$ is a finite sum of quadratic monomials  $s x_a x_b $ where $s \in S$ and either  $a$ or $b$ is at most $i-1$; however if $i=1$,  $g$ is a finite sum of quadratic monomials $s x_a x_b$ where $s \in S$ and $a=1$ and $b<j$ or vice versa ($b=1$ and $a<j$). 

\end{description}
\item For all $j$ and $i$, there exist units $c_{ji} \in k$ such that 
\[t_ix_j  - c_{ji} x_j  t_i  \in  k + kt_1+ \cdots + k t_u+ kx_1+ \cdots + kx_v\]
where in case 1, $(u, v)=(i, j-1)$ or $(i-1, j)$; while  in case 2, $(u, v)=(i, n)$; and in case 3, $(u, v)=(i-1, j)$.
\end{enumerate}  
 
Moreover, 

\begin{enumerate}[(1)]
\item for all $i,j$, $ k t_i +k = k + t_i k$ and $k x_j +k = k + x_j k$, and 
\item each $t_i$ and each $x_i$ commute with the elements of a subring $A$ of $S$
such that $S$ is finitely generated as a ring over $A$.

\end{enumerate}
Then in all cases, if $k$ is noetherian or F-noetherian or directed F-noetherian,  so is $G$.
\end{thm}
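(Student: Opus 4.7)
The plan is to relabel all generators in a single sequence, setting $y_\ell=t_\ell$ for $1\le\ell\le m$ and $y_{m+r}=x_r$ for $1\le r\le n$, so that $G=k[y_1,\dots,y_{m+n}]$, and then invoke Theorem 5.4 with $S=k$ applied to the tuple $(y_1,\dots,y_{m+n})$. If the hypotheses of Theorem 5.4 are verified for this choice, parts (i)--(iii) of that theorem deliver the noetherian, F-noetherian, and directed F-noetherian conclusions for $G$ in one stroke.

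The global hypotheses of Theorem 5.4 transfer immediately. Condition (1), $y_\ell k + k = k + ky_\ell$, is exactly the moreover clause (1). The requirement in parts (ii)--(iii) of Theorem 5.4 that each generator commutes with a subring $A\subset k$ over which $k$ is finitely generated is exactly the moreover clause (2). The substance of the work is then to check, for every pair $j>i$, that the appropriate commutator $y_j y_i - q\,y_i y_j$ lies in one of Cases~1, 2, or~3 of Theorem 5.4 for some unit $q\in k$.

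Three types of pairs arise. For pairs of $t$'s (so $j\le m$), hypothesis (i) gives membership in $k[t_1,\dots,t_{j-1}]+kt_j\subset k[y_1,\dots,y_{j-1}]+ky_j$, which is Case~1. For cross pairs (one $t$ and one $x$), hypothesis (iii), after multiplying through by $c_{ji}^{-1}$, yields a linear expression in $y_1,\dots,y_{m+n}$; since Case~2 of Theorem 5.4 permits any such linear expression, all three sub-cases $(u,v)$ listed in (iii) are absorbed. For pairs of $x$'s, hypothesis (ii) Cases~1 and~2 translate directly into Cases~1 and~2 of Theorem 5.4 under the index shift $x_r\mapsto y_{m+r}$, and (ii) Case~3 translates into Case~3 of Theorem 5.4, with the quadratic monomials $sx_ax_b$ mapping to $sy_{m+a}y_{m+b}$.

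The main technical point I expect to handle carefully is that the ``$a$ or $b\le i-1$'' clause of Theorem 5.4 Case~3 is respected after the index shift, and in particular that the $i=1$ sub-case of (ii) Case~3 remains compatible with the role the smallest index plays in Theorem 5.4 Case~3: concretely, the shifted relation carries $y$-index $m+1$, and one must check that the inductive treatment used for $i=1$ in Theorem 5.4 Case~3 applies equally well to $y_{m+1}$ in its role as the earliest generator of the $x$-block within the enlarged filtration by $y$-indices. Once this compatibility is verified, Theorem 5.4 applies verbatim to $(y_1,\dots,y_{m+n})$ and yields the noetherian, F-noetherian, and directed F-noetherian conclusions of the statement simultaneously.
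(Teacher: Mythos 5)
There is a genuine gap: Theorem 5.4 does not support the mixed-case invocation you make. In Theorem 5.4 the choice among Cases 1, 1', 2, 3 is global — its proof builds one filtration whose degree assignment depends on which case is in force (degrees $N^i$ in Case 1, all degrees $1$ in Case 2, $2^n-2^{n-i}$ in Case 3) — so it only covers the situation where the \emph{same} case governs every pair of generators. After your relabeling $y_\ell=t_\ell$, $y_{m+r}=x_r$, the $t$-$t$ pairs satisfy only a Case 1-type relation (polynomial, not linear, right-hand sides), while in sub-cases 2 and 3 of the present theorem the $x$-$x$ pairs satisfy only a Case 2- or Case 3-type relation (their right-hand sides involve $x_n=y_{m+n}$, hence do not lie in $k[y_1,\dots,y_{j-1}]+ky_j$), and the cross pairs you file under Case 2. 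No single case of Theorem 5.4 covers all pairs simultaneously, and the paper states explicitly that the point of this theorem is to \emph{mix} the cases of Theorem 5.4, which is why it is not a corollary of it. Your statement that ``Case 2 of Theorem 5.4 permits any such linear expression, so all three sub-cases $(u,v)$ listed in (iii) are absorbed'' is exactly where the argument breaks: the restrictions on $(u,v)$ are not decorative, they are what makes the cross relations compatible with the filtration degrees forced on the $t$'s and $x$'s by their own relations.

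What is actually needed (and what the paper does) is to rerun the filtration argument of Theorem 5.4(i) with a combined degree assignment — $\deg(t_i)=N^i$ from the $t$-relations, and $\deg(x_j)=M^j$, or $1$, or $2^n-2^{n-j}$ according to the sub-case — and then verify the inequalities such as $N^i+M^{j-1}<N^i+M^j$ (resp. $N^{i-1}+1<N^i+1$, resp. $N^{i-1}+2^n-2^{n-j}<N^i+2^n-2^{n-j}$) showing that the right-hand sides of the cross relations, constrained precisely by the $(u,v)$ in hypothesis (iii), lie strictly below filtration degree $\deg(t_i)+\deg(x_j)$; then gr$(G)$ is an almost-commuting extension to which Lemma 5.3 applies, and parts (ii)--(iii) follow as in Theorem 5.4(ii)--(iii). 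As a side remark, in sub-case 1 alone your reduction can be repaired without mixing: there the cross relations and the $x$-relations do land in Case 1 of Theorem 5.4 with respect to the $y$-ordering, so Theorem 5.4 Case 1 applies verbatim to $(y_1,\dots,y_{m+n})$; but sub-cases 2 and 3 genuinely require the new mixed filtration.
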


\vspace{2mm}
In section 6, we shall give many examples. For example, we generalize the Hayashi example in (6.5) as follows.
\example  \textit {A generalization of the Hayashi Example}.  Let $R$ be the $k$-algebra generated by the variables $x_i, y_i, z_i$,    $1 \leq i \leq n$ such that any pair of variables almost-commute (in the sense that $ab=q_{ab} ba$ where $q_{ab}$ is a  unit in $k$) except for the pairs $(x_i, z_i)$ where we have the relations  
                       
                                        \[ (z_i x_i + qx_i z_i) y_i  = 1 =   y_i (z_i x_i + qx_i z_i) \]

for some unit $q \in S$.  Then if $k$ is noetherian or F-noetherian, so is $R$.

\vspace {3mm}
In section 7, we generalize many quantum groups as in the following example stated in (7.2).
\begin{example} \it {A generalization of the quantum group $\mathcal{O}_q(M_n(k))$}

\vspace{7mm}
\indent  Let   $ G(\mathcal{O}_q (M_n(k))) =  k[t_1,  \dots , t_m,  x_{11},  \dots , x_{nn}]$ be the k-algebra generated by the variables $\{t_1,  \dots , t_m,  x_{11},  \dots , x_{nn} \}$ such that 
\begin{enumerate}[(i)]
\item $k[x_{11},  \dots , x_{nn}] = \mathcal{O}_q(M_n(k)) $ (with $q$ being a central unit of $k$) is the $k$-algebra with the standard relations of the quantum group of $\mathcal{O}_q(M_n(k)) $ of  $n \times n$ matrices.  \textit{See [B-K, p. 16]}. 

\item For all $j > i$,  there exist units $p_{ji} \in k$  such that   
\begin{description}
\item[Case 1: ]  $ t_j t_i� - p_{ji}� t_i t_j� \in   k[t_1, \dots, t_{j-1}] + kt_j $
\item [Case 2: ]  $ t_j t_i� - p_{ji}� t_i t_j� \in   k +k t_1+  \cdots + k t_m $
\end{description}

\item Let $\{x_1,  \dots , x_{n^{2}}\}$ be the lexicographic ordering of $ \{x_{11},  \dots , x_{nn}\}$. 
For all $j$ and $i$, we also assume that there exist units $c_{ji}$ in $k$ such that 
\[t_ix_j� - c_{ji}�x_j  t_i  \in  k + kt_1+ \cdots + k t_u+ kx_1+ \cdots + kx_v\]
\end{enumerate}
where in case 1,  $(u, v)=(i, j-1)$ or $(i-1, j)$, while in case 2, $(u,v)= (m,n^{2})$  

\vspace {3mm}
If $k$ is noetherian, or F-noetherian, or directed F-noetherian, 
then so is $ G(\mathcal{O}_q(M_n(k))) $. 
\end{example}

\vspace{3mm}
In section 8, we shall give many examples of F-noetherian matricial rings. For example, we show that $M_2(Z[x])$ is noetherian but not tightly F-noetherian. \\
We also give many examples of F-noetherian group rings. For example, 
let $K[G]$  be the group algebra of a locally finite group $G$ over an F-noetherian ring $K$. (Recall that a group $G$ is locally finite group if every finitely generated subgroup is finite).   (Interesting examples of $G$ are the finitary symmetric/alternating groups on an infinite set). Or let $K[G]$  be the group algebra of a polycyclic-by-finite group $G$ (for example $G$ is a finitely generated nilpotent group)  over an F-noetherian ring $K$. Then in both cases  $K[G]$ is F-noetherian. (See 8.2.4 and 8.2.5)
   
\vspace{4mm}
Finally, in section 9, we pose few open problems. For example,
\vspace {2mm}
\\ {\textbf  {Problem} 1.}   Find an example of an F-noetherian ring which is not a 
\\ direct limit of noetherian rings.
Or equivalently, find an example of an F-noetherian ring which is not directed F-noetherian. 

\vspace{3mm}  
 \textit  {Throughout the paper, we shall use the notation 
$$R=S[x_1,x_2, \dots, x_n]$$ for the ring generated by a subring $S$ and the elements $x_1,  \dots , x_n$ of $R$}. \\

\paragraph{Acknowledgements}

It is my great pleasure to acknowledge my deep indebtedness to George Bergman and Donald Passman for their constant encouragement and invaluable help in preparing this paper. I also wish to thank Kenneth Goodearl and Kenneth A. Brown for some interesting comments on the paper, and to thank Farid Kourki for providing me with some references on section 2.

\vspace{3em}
\section{Linear Algebra over Commutative and Noetherian rings} \label{linAlg}

\vspace{2mm}
\textit {Every ring $R$  has an identity element $1$, all subrings of $R$ have $1$,  all R-modules are unital, and all ring morphisms from $R$ to $S$ take $1_R$ into $1_S$. Moreover, recall that a ring is noetherian if it is left and right noetherian .}  \\
  \\
\par First we recall the following definitions in [L 2, Chapter 1].
\begin{defin}\label{src} A ring $R$ satisfies the (right) \textbf{strong rank condition} (SRC) if, whenever we have a monomorphism of right $R$-modules  $f: R^m \rightarrow R^n$, then $m \le n$.  Equivalently, any set of linearly independent elements in $(R^n)_R$ has cardinality  $\leq n$. We shall reserve the term strong rank condition (SRC) for both right SRC and left SRC.
\end{defin} 
\begin{defin} A ring $R$ is \textbf{stably finite} if every epimorphism of (right or left) $R$-modules  
$f: R^n \rightarrow R^n$  is an isomorphism.  Equivalently, any generating set of $n$ elements of $R^n$ is a basis of $R^n$. Or equivalently, the matrix rings $M_n(R)$ are Dedekind-finite, i.e, they satisfy the property:  $ab=1$ implies $ba=1$.
\end{defin}
For the basic properties of such rings, see [L 2, Chapter 1].
\begin{fact}
Commutative rings and noetherian rings have the SRC
\end{fact}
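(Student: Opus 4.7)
The plan is to handle the two halves separately, since they call for quite different techniques.

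For the noetherian case I would invoke uniform (Goldie) dimension. A right noetherian ring $R$ admits no infinite direct sum of nonzero right ideals, so $R$ has finite right uniform dimension $u(R) \geq 1$ as a right module over itself. Uniform dimension is additive on finite direct sums and monotone under monomorphisms, so any embedding $R^m \hookrightarrow R^n$ forces $m \cdot u(R) \leq n \cdot u(R)$, and dividing by $u(R) \geq 1$ gives $m \leq n$. The left SRC follows by symmetry. This part is essentially bookkeeping once finite uniform dimension is available.

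For the commutative case I would reduce first to domains and thence to fields. If $R$ is a domain with fraction field $K$, then $K$ is flat over $R$, so any monomorphism $f : R^m \to R^n$ base-changes to an injection $K^m \to K^n$, whence $m \leq n$ by vector-space linear algebra. To descend a general commutative $R$ to the domain case, I would invoke McCoy's classical theorem: any $n \times m$ matrix $A$ over a commutative ring with $m > n$ has nonzero kernel. The heart of McCoy's argument is a determinantal identity applied to an $n$-row, $(n+1)$-column submatrix of $A$: the alternating sum of those columns weighted by their signed $n \times n$ minors vanishes identically, producing a candidate kernel vector in $R^m$ whose entries are those minors, padded by zeros. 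If the candidate is nonzero, we are done; otherwise every $n \times n$ minor of $A$ is zero, and one iterates with smaller submatrices until the effective rank drops low enough that descending through $R \to R/P$ for a minimal prime $P$ (where the relevant determinantal ideal has nonzero annihilator) reduces us to the already-settled domain case.

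The expected main obstacle is the commutative half, specifically the careful descent through zero-divisors and nilpotents when the top minors vanish; that is exactly the content of McCoy's theorem and is the only nontrivial step. Since Lam's treatment in [L 2, Chapter 1] records both halves in full, one could instead simply cite that reference; I would prefer to give the self-contained outline above so that the reader sees why the two classes of rings fit together under SRC from quite different reasons (Goldie theory on one side, commutative algebra on the other).
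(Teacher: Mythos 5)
Your noetherian half is exactly the paper's own alternate argument: finite uniform dimension plus additivity gives $m\,u(R)\le n\,u(R)$, hence $m\le n$, and left SRC by symmetry. The commutative half, however, takes a genuinely different route. The paper reduces the commutative case to the noetherian case via the Hilbert Basis Theorem: a putative $R$-linear dependence (or the $m$ given independent vectors in $R^n$) involves only finitely many ring elements, so everything lives over the finitely generated, hence noetherian, subring they generate, and independence over $R$ is inherited by that subring; this is short, needs no commutative algebra beyond HBT, and is very much in the spirit of the F-noetherian theme of the paper. You instead go through flat base change to the fraction field for domains and McCoy's theorem for general commutative rings. That is a perfectly legitimate classical path (and Lam's [L 2, Chapter 1] records it), and it has the virtue of giving the sharper statement that any $n\times m$ homogeneous system with $m>n$ has a nontrivial solution. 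One caveat: your sketch of the descent step inside McCoy's proof is not the standard one and, as written, would not go through --- passing to $R/P$ for a minimal prime destroys injectivity of the map and a kernel vector modulo $P$ does not lift to one over $R$. The usual argument avoids primes altogether: take $r$ maximal with the ideal of $r\times r$ minors having zero annihilator, pick $c\ne 0$ annihilating all $(r+1)\times(r+1)$ minors with $c$ not killing some $r\times r$ minor, and use the cofactor identity to produce the nonzero kernel vector with entries $c$ times signed minors. Since you also offer to cite McCoy or Lam outright, this is a repairable imprecision rather than a fatal gap, but if you spell the proof out you should replace the minimal-prime step by the annihilator trick.
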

The Strong Rank Condition (SRC) for commutative rings is a fact that does not seem to be well known as it should be, as remarked in  [L 2, p. 15]). Moreover, every (left and right) noetherian ring has the strong rank condition (SRC). This is proved in a simple way in [L 2, p. 14]. For another proof, one can take the (Goldie) uniform dimension in the embedding $R^m \rightarrow  R^n$, to get  $m \dim(R) \leq n \dim(R)$.
Thus $m \leq n$ (since $R$ is a non-zero noetherian ring). The proof in the commutative case can be easily reduced to the noetherian case by the Hilbert Basis Theorem. 

\begin{secc} We will see that commutative rings and noetherian rings satisfy the following three further properties, for which we shall give specific names for easy reference.
\begin{enumerate}[(i)]
\item \textbf{Full SRC}.  A ring $R$ has the \emph{full SRC condition} if, for (right or left) $R$-modules $M$ and $N$ where $M$ is finitely generated, $M \oplus N$ can not be imbedded in $M$ unless $N=0$. In particular, if $M^m$ is embedded in $M^n$ and $M$ is a finitely generated non-zero $R$-module, then $m \leq n$. Equivalently, a ring $R$ has the full SRC if every monomorphism of a finitely generated $R$-module $M$ has an essential image.
\item \textbf{Fully stably finite}.  A ring $R$ is called \emph{fully stably finite}  if every epimorphism 
$M \rightarrow M$ of a finitely generated $R$-module $M$ is an isomorphism. That is, every finitely generated (right or left) $R$-module $M$ is Hopfian.
\item \textbf{Basic}.  A ring $R$ is called  \emph{basic} if it has the following very interesting property. Suppose we have an epimorphism $f: A \rightarrow M$ and a monomorphism $i: A \rightarrow M$ �of (right or left) $R$-modules.�If $M$ is a finitely generated $R$-module, then $f$ is an isomorphism.  
Or equivalently, by (2.6) below,  $R$ is a basic ring if it has the following (seemingly a bit stronger) property.  Suppose we have an epimorphism $f: A \rightarrow M$ and a monomorphism $i: A \rightarrow M$ �of (right or left) $R$-modules.�If $M$ is a finitely generated $R$-module, then $f$ is an isomorphism and $i(A)$ is \emph{essential} in $M$. 
\end{enumerate}

Note that the basic  property above resembles ``the Schr\"{o}der Bernestein Theorem" for injective $R$-modules [L 2 or 3, Ex. 3.31]. Observe, in the above setting, that if $A$ is finitely generated (instead), then $M$ is finitely generated via $f$. 
\end{secc}
\begin{comm} The above notion of basic rings was motivated to us by a very interesting exercise (stated below) which has many noteworthy special cases as described in [L 3, Ex. 1.10, p. 7] which is taken from p. 61 of the book of Balcerzyk and Josefiak, ``Commutative Noetherian  Rings and Krull Rings," Halsted Press/Polish Sci. Publishers, 1989.

\paragraph{Exercise:}  Suppose we have an epimorphism $f: A \rightarrow M$ and a monomorphism 
$i: A \rightarrow M$  of $R$-modules.  Then $f$ is an isomorphism in the following cases.
\begin{enumerate}
\item $A$ is a noetherian module 
\item $R$ is commutative and $M$ is finitely generated
\end{enumerate}
\end{comm}

\begin{thm}\leavevmode
\begin{enumerate}[(i)]
\item Basic $\Longrightarrow$ full SRC and fully stably finite.
\item A ring $R$ is basic if it has the following (seemingly stronger) property:
Suppose we have an epimorphism $f : A \rightarrow M$ and a �monomorphism 
$i: A \rightarrow M$  of $R$-modules.  If $M$ is a finitely generated $R$-module, then $f$ is an isomorphism and $i(A)$ is essential in $M$.
\item Commutative rings and noetherian rings are basic.
\end{enumerate}
\end{thm}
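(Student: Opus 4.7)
The plan is to dispatch each part by a short application of the basic hypothesis (for (i) and (ii)) or by reducing to the exercise recalled in Comment 2.5 (for (iii)).

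For (i), I would specialize the basic hypothesis to two standard setups. For the full SRC, given a monomorphism $M \oplus N \hookrightarrow M$ with $M$ finitely generated, set $A := M \oplus N$, take $f : A \to M$ to be the projection (an epimorphism), and take $i : A \to M$ to be the given monomorphism. Applying the basic property forces $f$ to be an isomorphism, and hence $N = \ker f = 0$. For fully stably finite, given an epimorphism $g : M \to M$ with $M$ finitely generated, take $A := M$, $f := g$, and $i := \mathrm{id}_M$; the basic hypothesis directly yields that $g$ is an isomorphism.

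For (ii), the content is the non-trivial direction that basic already implies the essentiality of $i(A)$; the converse is immediate since the stronger property literally contains the definition of basic. Assuming $R$ is basic, suppose $f : A \to M$ is an epimorphism and $i : A \to M$ is a monomorphism with $M$ finitely generated; by basic, $f$ is already an isomorphism. To prove $i(A)$ is essential, let $N \subseteq M$ be a submodule with $N \cap i(A) = 0$. Form $A' := A \oplus N$ together with the epimorphism $f' : A' \to M$, $(a,n) \mapsto f(a)$, and the map $i' : A' \to M$, $(a,n) \mapsto i(a) + n$. The hypothesis $N \cap i(A) = 0$ is exactly what makes $i'$ injective. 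Invoking basic on the triple $(f', i', M)$ makes $f'$ an isomorphism, and since every element of the form $(0,n)$ lies in $\ker f'$, one concludes that $N = 0$, so $i(A)$ is essential in $M$.

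For (iii), each case reduces to the exercise stated in Comment 2.5. When $R$ is noetherian, a finitely generated $M$ is a noetherian module, so the submodule $i(A) \subseteq M$, and hence $A \cong i(A)$, is a noetherian module; case (1) of the exercise then forces $f$ to be an isomorphism. When $R$ is commutative, $M$ is finitely generated by hypothesis, so case (2) of the exercise applies directly.

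The main obstacle is essentially conceptual rather than computational: in part (ii), one must hit on the correct construction of $A'$, $f'$, and $i'$ that exhibits $N$ inside the kernel of a basic-style epimorphism. The genuinely hard analytic step is hidden inside the exercise cited in Comment 2.5, namely the Vasconcelos/determinant-trick argument for commutative $R$ with $M$ finitely generated, which I would use as a black box.
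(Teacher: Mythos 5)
Your proof is correct and follows essentially the same route as the paper: parts (i) and (iii) are exactly the paper's specializations of the basic property and the same reduction of the noetherian/commutative cases to the exercise recalled in Comment 2.5. The only (harmless) divergence is in (ii), where you apply the basic hypothesis a second time to the augmented pair $f',i':A\oplus N\to M$, whereas the paper uses the isomorphism $A\cong M$ furnished by $f$ to embed $M\oplus N$ in $M$ and then quotes the full SRC from part (i); both are one-line reductions and equally valid.
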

\begin{proof}\leavevmode
\begin{enumerate}[(i)]
\item Let $R$ be a basic ring. To prove $R$ has the full SRC, let $M$ be a finitely generated $R$-module suppose $M \oplus A$ is embedded in $M$. Then consider the projection 
$f: M \oplus A$ onto $M$. Since $R$ is basic, the projection map $f$ must be an isomorphism. Hence $A=0$, so $R$ has the full SRC.

Now let $f: M \rightarrow M$ be an epimorphism of a finitely generated $R$-modules and consider the identity morphism $M \rightarrow M$. Since $R$ is basic, $f$ must be an isomorphism, and $R$ is fully stably finite. 
\item Assume $R$ to be basic and suppose we have a monomorphism $i: A \rightarrow M$ and an epimorphism $f : A \rightarrow M$ of (right or left) $R$-modules where $M$ is a finitely generated $R$-module. Since $R$ is basic, $f$ is an isomorphism, so $A$ and $M$ are isomorphic. To prove $i(A)$ is essential in $M$, suppose $i(A) \oplus B$ is embedded in $M$. But $i(A), A$, and $M$ are isomorphic. Hence $M \oplus B$ is embedded in $M$, so $B=0$ by (i).  Hence $i(A)$ is essential in $M$.
\item Now suppose we have an epimorphism $f : A \rightarrow M$ and a �monomorphism 
$i: A \rightarrow M$  of $R$-modules where $M$ a finitely generated $R$-module.  If $R$ is commutative, then $f$ is an isomorphism by Exercise (2.5), so $R$ is basic.  If $R$ is noetherian, then the fintely generated $R$-module $M$ is a noetherian, so $i(A)$ and A are noetherian $R$-modules. Hence Exercise (2.5) applies to obtain that $f$ is an isomorphism. 
Hence $R$ is basic. \end{enumerate} \end{proof} 
\par   
\textbf {Note}. After writing the paper, F. Kourki pointed out that rings with the  \textit {full SRC} are characterized (without names)  in [Hag-V, Thm. 3.3]. Moreover, our \textit {fully stably finite} rings are characterized (without names) via their matrix rings in [G, Thm. 7]. Furthermore, our \textit {basic} rings are precisely the left and right II$_1$ rings in [D] where such rings are defined as follows: every epimorphism from a submodule of a finitely generated $R$-module $M$ onto $M$ is an isomorphism. The main result in [D] is that Left II$_1$ rings are closed under direct limits [D, Thm. 2]. Finally, certain P.I. rings are left II$_1$ (see [A-F-S, Thm. 2.2]).
 
\vspace{2em}
\section{F-Noetherian Rings} \label{noeth}

\textit  {Our modules will be viewed as right modules and the arguments follow similarly for left modules}.
\vspace{5mm}

First we introduce the following new definition.
\begin{defin} A ring $R$ is called F-noetherian (for finitely noetherian) if every finite subset of $R$ is contained in a (left and right) noetherian subring of $R$. In other words, a ring $R$ is an F-noetherian ring if $R$ is a union of (not necessarily directed) noetherian subrings $R_i$ with the additional property that every finite subset of $R$ is contained in some $R_i$.
\end{defin}
\begin{defin} A ring $R$ is called \textit {directed} F-noetherian if for every finite subset $A$ of $R$, there exists a noetherian subring $R(A)$ of $R$ containing $A$ such that
if  $X \subset Y$ are finite subsets of $R$, then $R(X) \subset R(Y)$.
\end{defin}

\begin{defin} A ring $R$ is called \textit {tightly} F-noetherian if every finite subset of $R$ generates (as a ring) a noetherian subring of $R$. Note that in this case, $R$ is directed F-noetherian since $R$ is the union of the subrings $R_i$ which are generated by the finite subsets of $R$. 
\end{defin} Hence we have the following trivial implications.
 \[\text{Tightly }F\text{-noetherian} \Longrightarrow \text{ directed }F\text{-noetherian�}\Longrightarrow F\text{-noetherian} \]

\vspace {2mm} 
Moreover, in Proposition (8.3) we will see that 
 \[ \text{Directed F-noetherian rings}  \equiv  \text{Direct limits of noetherian rings}  \]

\begin{thm} F-noetherian rings are basic rings. In particular, F-noetherian rings have the full SRC and the fully stably finite condition.
\end{thm}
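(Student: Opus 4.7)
The plan is to reduce the statement that $R$ is basic to part (1) of the exercise in Comment 2.5, namely the fact that an epimorphism $f:A\to M$ accompanied by a monomorphism $i:A\to M$ is automatically an isomorphism whenever $A$ is a \emph{noetherian} module. The F-noetherian hypothesis on $R$ will let me descend to a noetherian subring $R_0\subseteq R$ over which the relevant submodules become finitely generated, hence noetherian.

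First I would reformulate the problem. Given the data $(f,i,A,M)$, set $A':=i(A)\subseteq M$ and define $g:A'\to M$ by $g(i(a)):=f(a)$; this is well-defined since $i$ is injective, and is an epimorphism because $f$ is. Because $i$ is an isomorphism onto $A'$, showing $g$ is an isomorphism is equivalent to showing $f$ is, so it suffices to prove that $g$ is injective on the submodule $A'$ of $M$.

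Next I would fix generators $m_1,\dots,m_n$ of $M$, lift them to $a_k\in A'$ with $g(a_k)=m_k$, and use that $a_k\in A'\subseteq M$ to write $a_k=\sum_j m_j s_{jk}$ for some $s_{jk}\in R$. To test injectivity, take an arbitrary element $x\in \ker g$ and expand $x=\sum_k m_k t_k$ with $t_k\in R$. By F-noetherianness, choose a (left and right) noetherian subring $R_0\subseteq R$ containing the finite set $\{s_{jk}\}\cup\{t_k\}$, and set
\[
M_0 \;:=\; \textstyle\sum_k m_k R_0, \qquad A'_0 \;:=\; xR_0 + \textstyle\sum_k a_k R_0.
\]
By the choice of $R_0$, both the $a_k$ and $x$ lie in $M_0$, so $A'_0\subseteq M_0$, and the restriction $g|_{A'_0}:A'_0\to M_0$ is an $R_0$-linear epimorphism accompanied by the submodule inclusion $A'_0\hookrightarrow M_0$ (this plays the role of the mono in the exercise). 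Since $R_0$ is noetherian and $A'_0$ is finitely generated over $R_0$, it is a noetherian $R_0$-module. Exercise 2.5(1) applied inside the category of $R_0$-modules then forces $g|_{A'_0}$ to be an isomorphism; in particular it is injective, so $x=0$. This proves $R$ is basic, and the ``In particular'' clause on full SRC and fully stably finite is then immediate from Theorem 2.6(i).

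The main technical point is to ensure that, for each candidate kernel element $x$, the relevant $R_0$-submodules $A'_0\subseteq M_0$ of $M$ remain \emph{finitely generated} over the noetherian subring $R_0$. This is what dictates which finite data must be absorbed into $R_0$: the structure constants $s_{jk}$ realizing the chosen generators of $A'$ inside $M$, and the coefficients $t_k$ expressing the particular element $x$ in terms of $m_1,\dots,m_n$. Since injectivity need only be checked one element at a time, this element-by-element finite absorption is exactly what the F-noetherian hypothesis supplies.
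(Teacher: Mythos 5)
Your proposal is correct and follows essentially the same route as the paper: the paper's proof simply invokes the reduction-to-a-noetherian-subring argument from Lam's Exercise 1.10, observing that it only needs each finite subset of $R$ to lie in a noetherian subring, and your write-up is precisely that reduction carried out explicitly (absorbing the structure constants $s_{jk}$ and the coefficients $t_k$ of a kernel element into a noetherian subring $R_0$, then applying the noetherian-module case of Exercise 2.5 to $g|_{A'_0}:A'_0\to M_0$). The only difference is that you spell out the details the paper leaves to the cited exercise solution, which is a welcome clarification rather than a different method.
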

\begin{proof}  By (2.7) it suffices to prove that an F-noetherian ring $R$ is basic 
where $M$ is a finitely generated $R$-module. We must prove that $f$ is an isomorphism.

In case $R$ is commutative, one can easily reduce the proof about $f$ being an isomorphism to the noetherian case verbatim as done in the solution to Exercise (1.4)(ii) in [L 3, Ex. 1.10, p. 7]. We comment that this reduction resembles the reduction to the noetherian case done by Strooker in the mid 60's in proving that, if $R$ is a commutative ring, every epimorphism of $R^n$ is an isomorphism.
In fact, this last property was also proved later by Vasconcelos.  However the solution to Exercise (1.4)(ii) in [L 3, p. 7] only requires that every finite subset of $R$ is contained in a noetherian subring of $R$, which is exactly what we have since $R$ is F-noetherian. Hence $f$ is an isomorphism. 
\end{proof}
\begin{example}\label{3.4} The first examples of F-noetherian rings are commutative rings which are also tightly F-noetherian via the Hilbert Basis Theorem as shown in (2.1). Division rings and more generally noetherian rings are obviously F-noetherian. 
\end{example}
\begin{remark}\leavevmode
\begin{enumerate}[(i)]
\item F-noetherian rings and tightly F-noetherian rings are preserved under homomorphic images.
\item F-noetherian rings and tightly F-noetherian rings are preserved under direct limits. 
\item A subring (which contains the unity $1$) of a tightly F-noetherian ring $R$ is trivially a tightly F-noetherian ring. 
\item A subring of an F-noetherian ring may fail to be an F-noetherian ring.
\end{enumerate}
\end{remark}
\subparagraph{Example.}  Note that we can not take the field of fractions of a commutative polynomial ring P in infinitely many variables over a field because $P$ is commutative whence (tightly) F-noetherian. So let $R$ be any division hull of the free ring $\mathbb{Z}[ a, b]$ on two generators [L 1, (14.25)].
Then $R$ is noetherian being a division ring.  However, the subring $S : = \mathbb{Z}[ a, b]$  is not noetherian, for example, because the uniform dimension of $S$
as a right $S$-module is infinite as in [L 2, (1.31)]. But $S$ is generated by $\{a, b\}$.  Hence $S$ is not F-noetherian.

\begin{prop}\leavevmode
\begin{enumerate}[(i)]
\item If $R$ and $S$ are F-noetherian rings, then so is $R\times S$.   
\item If $R$ and $S$ are tightly F-noetherian rings, then so is $R\times S$. 
\end{enumerate}
\end{prop}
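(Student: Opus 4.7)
The plan is to handle (i) and (ii) in parallel by projecting any finite subset onto each factor and assembling a noetherian subring of $R\times S$. Write $\pi_R,\pi_S$ for the coordinate projections, let $F\subseteq R\times S$ be a finite subset, and set $F_R:=\pi_R(F)$ and $F_S:=\pi_S(F)$.

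For (i), I would invoke F-noetherianity of each factor to produce noetherian subrings $R_0\subseteq R$ containing $F_R$ and $S_0\subseteq S$ containing $F_S$. Then $R_0\times S_0$ is a subring of $R\times S$ (sharing the identity $(1_R,1_S)$) which contains $F$ and is noetherian as a finite product of noetherian rings. This directly witnesses that $R\times S$ is F-noetherian.

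For (ii), take $R_0$ and $S_0$ now to be the subrings of $R$ and $S$ \emph{generated by} $F_R$ and $F_S$; both are noetherian by the tight F-noetherian hypothesis. Let $T$ be the subring of $R\times S$ generated by $F$. Then $T\subseteq R_0\times S_0$, but critically $T$ need not equal $R_0\times S_0$ (it need not contain the idempotent $(1_R,0)$), so noetherianness of $T$ does not follow from that of $R_0\times S_0$ (subrings of noetherian rings are not in general noetherian). However, $\pi_R(T)$ is a subring of $R$ containing $F_R\cup\{1_R\}$, hence equals $R_0$; symmetrically $\pi_S(T)=S_0$. Thus the task reduces to the only nontrivial step: showing that any subring $T$ of a product $A\times B$ of noetherian rings with $\pi_A(T)=A$ and $\pi_B(T)=B$ is itself noetherian.

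I would prove this subdirect-product lemma by a short module-theoretic argument. Set $I:=T\cap(A\times\{0\})=\ker(\pi_B|_T)$, so $T/I\cong B$. The right $T$-action on $A\times\{0\}$ satisfies $(a,0)\cdot(r,s)=(ar,0)$, so it factors through the surjection $\pi_A:T\twoheadrightarrow A$; since $A$ is noetherian as a right $A$-module, $A\times\{0\}$ is a noetherian right $T$-module, and hence so is its $T$-submodule $I$. Similarly $T/I\cong B$ is a noetherian right $T$-module through $\pi_B$. An extension of two noetherian modules being noetherian, $T$ is right noetherian, and the symmetric argument gives left noetherianity. The main conceptual obstacle is recognizing the need to leverage the subdirect-product structure rather than trying to embed $T$ as some nicer subring of $R_0\times S_0$; once this is spotted, the rest is a two-line extension argument.
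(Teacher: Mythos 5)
Your proposal is correct and takes essentially the same route as the paper: part (i) is identical, and in part (ii) both you and the paper reduce the problem to the lemma that a subdirect product of two noetherian rings is noetherian, proved module-theoretically by using the surjective projections to identify submodules over the subring with one-sided ideals of the factors. The only cosmetic difference is that you conclude via the extension $0\to I\to T\to T/I\to 0$ with $I=\ker(\pi_B|_T)$, whereas the paper views the subdirect product as a submodule of the noetherian module $R_0\times S_0$ over it; the key observation is the same in both arguments.
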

\begin{proof}\leavevmode
(i) let $X$ be a finite subset of $R\times S$.  Then $X$ is contained in $A\times B$ where $A$ and $B$ are finite subsets of $R$ and $S$ respectively. Since $R$ is F-noetherian, $A$ is contained in a noetherian subring $R'$ of $R$. Similarly, $B$ is contained in a noetherian subring $S'$ of $S$. Now $ R'\times S'$ is a noetherian subring containing $X$.  Hence $R\times S$ is F-noetherian.
\\ (ii)  This follows by the evident modification of the proof in part (i) and the following exercise. \end{proof}
\paragraph{Exercise} If $A$ is a subdirect product of two noetherian rings $R$ and $S$, then $A$ is noetherian. \emph{Recall that a subdirect product of rings $R$ and $S$ is a subring of $R\times S$ which projects surjectively onto each factor}.

\begin{proof} First we show that $S$ is a noetherian A-module as follows.  $S$ can be viewed as an $A$-module via the projection $g: R\times S \rightarrow S$  by $a.s= g(a)s$.� Let $W$ be a sub $A$-module of $S$.� For all $s\in S$ and $w\in W$, there exists $a\in A$ (since $A$ is a subdirect product of $R\times S$), such that� \begin{equation}\label{(*)}s.w= g(a).w = a.w.\end{equation}  So $W$ is an $S$-module. But $S$ is noetherian. Hence $W$ is a finitely generated $S$-module.  Consequently, $W$ is a finitely generated $A$-module by equation (\ref{(*)}). Similarly $R$ is a noetherian $A$-module. Hence $R\times S$ is a noetherian $A$-module, so every ideal $W$ of $A$ (which is an $A$-submodule of $R\times S$) is a finitely generated $A$-module.� Hence $A$ is noetherian.
\end{proof}
\begin{remark} Let $R$ be an F-noetherian domain.  Then $R$ is an Ore domain.
\end{remark}
\begin{proof} Let  $aR$ and  $bR$ be non-zero submodules. Then $a$ and $b$ (with $1$) are contained in a noetherian subring $S$. Since $S$ is a noetherian domain, $S$ is an Ore domain [L 2, (10.23)]. Hence $aS$ and $bS$ have non-zero intersection.  Consequently, $aR$ and $bR$ have non-zero intersection, so $R$ is a right Ore domain. Similarly, $R$ is a left Ore domain.
\end{proof}

\begin{prop}\label{multi} Let $R$ be an F-noetherian ring and let $S$ be a multiplicative subset of $R$. 
\begin{enumerate}[(i)]
\item If $S$ is central in $R$, then the localization ring $RS^{-1}$ is F-noetherian.  
\item Suppose there exists a finite set $Q$ in $R$ such that        
\[sR'=R's\]
for every element $s \in S$ and every subring $R'$ of $R$ containing $S \cup Q$.  
Then $S$ is a denominator set for $R$ and the localization ring $RS^{-1}$ is F-noetherian. 
\item Suppose there exists a finite set $Q$ of $R$ such that 
\[sR'=R's\]
for every element $s \in S$ and every \textbf{finitely generated} subring $R'$ of $R$ containing 
$S \cup Q$. If $R$ is also tightly F-noetherian, then $S$ is a denominator set of $R$ and the localization ring $RS^{-1}$ is $F$-noetherian. 
\end{enumerate}
\end{prop}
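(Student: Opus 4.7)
Given a finite subset $F = \{r_1 s_1^{-1}, \dots, r_n s_n^{-1}\}$ of $RS^{-1}$, I would use F-noetherianity to pick a noetherian subring $R_0 \subseteq R$ containing $\{r_i, s_i\}$, then set $S_0 := S \cap R_0$, which is a central multiplicative subset of $R_0$ containing every $s_i$. Classical central localization yields $R_0 S_0^{-1}$ noetherian (via the ideal correspondence between $S_0$-saturated ideals of $R_0$ and ideals of $R_0 S_0^{-1}$). The universal property of Ore localization provides a ring map $\varphi : R_0 S_0^{-1} \to RS^{-1}$, and $\varphi(R_0 S_0^{-1})$ is a noetherian subring of $RS^{-1}$ (homomorphic image of a noetherian ring, cf.\ Remark 3.6(i)) containing $F$.

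\textbf{Plan for (ii) and (iii).} Specializing the hypothesis to $R' = R$ (which trivially contains $S \cup Q$) gives $sR = Rs$ for every $s \in S$, i.e.\ the two-sided Ore condition on $S$ in $R$; the denominator-set assertion should follow from this together with regularity considerations inside noetherian subrings. For the F-noetherian conclusion of (ii), given a finite $F = \{r_i s_i^{-1}\} \subseteq RS^{-1}$, I would pick a noetherian subring $R_0 \subseteq R$ containing $\{r_i, s_i\} \cup Q$ and consider the submonoid $T = \langle s_1, \dots, s_n\rangle \subseteq R_0$. The plan is to enlarge $R_0$ to a noetherian subring $R^* \subseteq R$ in which $t R^* = R^* t$ for each $t \in T$, then form the Ore localization $R^* T^{-1}$ (noetherian, as an Ore localization of a noetherian ring) and map it into $RS^{-1}$, yielding a noetherian subring containing $F$. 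For part (iii), tightness bypasses the enlargement step: the finite set $\{r_i, s_i\} \cup Q$, together with a finite generating family for the submonoid of $S$ we wish to invert, generates a finitely generated subring $R'$ of $R$, which by tight F-noetherianity is already noetherian. The finitely-generated-subring clause of the hypothesis applies directly to give $sR' = R's$, and Ore localization of $R'$ at the appropriate submonoid lands inside $RS^{-1}$.

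\textbf{Main obstacle.} The crux of the proof lies in the enlargement step in (ii): the hypothesis quantifies over subrings $R'$ that contain all of $S$, while F-noetherianity only absorbs finite data into noetherian subrings, so when $S$ is infinite one cannot invoke the hypothesis directly on the noetherian subring $R_0$ chosen above. The natural remedy is to enlarge $R_0$ iteratively by adjoining, at each stage, the finitely many $T$-conjugates $r'$ (witnessing $s_i r = r' s_i$, which exist by the global relation $s_i R = R s_i$) of a finite generating set of the current stage, and then re-invoking F-noetherianity to embed this new finitely generated data in a noetherian subring of $R$. The delicate point is to arrange that this process stabilizes, or at least that the final $R^*$ is both noetherian and $T$-normalizing; verifying this is the main technical step. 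Once $R^*$ is secured, the remainder of the argument parallels part (i) verbatim.
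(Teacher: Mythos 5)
Your part (i) is essentially the paper's own argument and is fine. For (ii) and (iii), however, there is a genuine gap, and you flag it yourself: the entire difficulty is to produce a noetherian subring to which the normalization hypothesis applies, and you leave this as "the main technical step." The remedy you sketch — iteratively adjoining $T$-conjugates of generators and re-embedding the enlarged finite data into noetherian subrings — does not obviously work: there is no reason the process stabilizes, and an increasing union of noetherian subrings need not be noetherian, so the existence of your $T$-normalizing noetherian $R^*$ is exactly what remains unproved. Moreover your plan for (iii) inherits the same defect: you only adjoin generators of the submonoid $T=\langle s_1,\dots,s_n\rangle$ of denominators, but the hypothesis in (ii)/(iii) requires $R'\supseteq S\cup Q$, not $R'\supseteq T\cup Q$, so you are not entitled to $sR'=R's$ for the subring you build. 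Finally, "regularity considerations" is not what settles the denominator-set claim; what is needed is reversibility of $S$, and the point is that this is automatic for Ore sets in noetherian rings.

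The paper's resolution is different and much simpler, with no enlargement process at all. It fixes a finite set $S_0$ of multiplicative generators of $S$ (the proposition is intended for, and applied to, multiplicatively finitely generated $S$; compare Corollary 3.9 and the set $Z=\{z^i\}$ in (6.8)) and adds $S_0\cup Q$ to the finite data, together with the numerators and denominators of the given finite subset of $RS^{-1}$ and, when reversibility is being checked, the element $r$ with $sr=0$. Any subring containing this finite set automatically contains all of $S$, being closed under products, so the hypothesis $sR'=R's$ applies verbatim to the noetherian subring $R'$ furnished by F-noetherianity (in (iii), to the subring generated by the data, which is noetherian by tightness). Then $S$ is a two-sided Ore set of the noetherian ring $R'$, hence automatically a denominator set there by [G-W, Prop.\ 9.9]; this yields reversibility of $S$ in $R$ (since the relation $sr=0$ lives inside $R'$), and the localization $R'(R'\cap S)^{-1}$ is a noetherian subring of $RS^{-1}$ containing the given finite set, exactly as in part (i). So the key idea you are missing is not a stabilization argument but the observation that a finite multiplicative generating set of $S$ can be absorbed into the finite data, making the normalization hypothesis directly applicable to the noetherian subring one already has.
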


\begin{proof} \leavevmode
\begin{itemize}
\item[(i)] Let  $X:=\{x_1=\frac{a_1}{s_1}, \dots,  x_n=\frac{a_n}{s_n}\}$ be a subset of $RS^{-1}$. Then $\{a_1, \dots a_n; s_1, \dots, s_n \}$ is contained in a noetherian subring $R'$ of $R$.   Let  $S'= R' \cap S$. Now $S'$ is a denominator set in $R'$ since $S$ is central in $R$, 
and $R'$ is noetherian. Hence the localization $R'(S')^{-1}$ is noetherian [G-W, Cor. 9.18 ]. But $X$ is contained in $R'(S')^{-1}$.  Thus $X$ is contained in a noetherian subring of $RS^{-1}$.

\item[(iii)] For convenience, we shall prove (iii) before (ii). Let  $S_0$ be any finite set of generators for $S$. Then the proof of part (i) can be applied if we can check that $S'$ is a denominator set of $R'$ and $S$ is a denominator set of $R$. The fact they are both Ore sets follows from our assumption (*) that $sR'=R's$ for every element $s \in S$. But every Ore set in a noetherian ring is a denominator set  [G-W, Prop. 9.9].   
Hence $S'$ is a denominator set of $R'$. Now we prove that $S$ is right reversible in $R$. 
Suppose  $sr=0$ where $s \in S$ and $r \in R$. Then  $S_0 \cup (r \cup Q)$  \textbf{generates} a finitely generated noetherian subring $S(r)$ of $R$ (since $R$ is tightly F-noetherian). Now $S$ is a Ore set of $S(R)$ by (*) and $S(R)$ is noetherian.  As above, $S$ is a right denominator set in $S(R)$, and thus $S$ is right reversible in $S(R)$. 
Since $\{s, r\} \subset S(R)$, we have $sr=0$ (where $s \in S$ and $r \in S(R)$) implies that there exists $s^* \in S(R)$  such that $rs^*=0$.
Similarly, we prove that $S$ is left reversible in $R$.  Hence $S$ is a denominator set of $R$ as well.

\item[(ii)] For this part, the proof  in part (iii) applies except that now $S_0 \cup (r \cup Q)$ is contained in  a noetherian subring $S(r)$ of $R$ (since $R$ is only F-noetherian). But that is fine because our hypothesis now is that every element of $S$ normalizes every subring of $R$. This proves Proposition \ref{multi}.
\end{itemize}
\end{proof}

\begin{corr}\label{3.9} Let $R= K[x_1, \dots, x_n]$ be a finitely generated ring over a ring $K$ where each $x_i$ commutes with the elements of $K$. Let $S$ be finitely generated multiplicatively closed subset of $R$ where S is in the centralizer of $K$.
Suppose $K$ contains a finite subset $Q$ such that for all $s \in S$ 
and for each $x_i$, $sx_i= q_i(s)x_i s$ for some unit $q_i(s)$ in the subring $[ Q]$ generated by $Q$.  

If $R$ is F-noetherian, then $S$ is a denominator set for $R$ and the localization ring $RS^{-1}$ is F-noetherian.
\end{corr}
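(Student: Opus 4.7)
My plan is to invoke Proposition~\ref{multi}(ii) with the finite set $Q' := Q \cup \{x_1, \dots, x_n\} \subseteq R$ playing the role of its ``$Q$''. The task then reduces to verifying that $sR' = R's$ for every $s \in S$ and every subring $R' \subseteq R$ containing $S \cup Q'$.

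First I would set up the commutation infrastructure. The hypotheses $sk = ks$ (since $S$ centralizes $K$) and $sx_i = q_i(s) x_i s$ with $q_i(s) \in [Q]^{\times}$ (so also $q_i(s)^{-1} \in [Q]$) assemble, on formal grounds, into a ring automorphism $\sigma_s$ of $R$ that fixes $K$ pointwise and sends $x_i \mapsto q_i(s) x_i$; it satisfies $sr = \sigma_s(r)\, s$ for every $r \in R$. Iterating on monomials gives $\sigma_s(x^\alpha) = q(s,\alpha)\, x^\alpha$ with $q(s,\alpha) := \prod_j q_{i_j}(s) \in [Q]^\times$, so that for $r = \sum_\alpha k_\alpha x^\alpha \in R$ we have $sr = \bigl(\sum_\alpha k_\alpha q(s,\alpha) x^\alpha\bigr) s = \sigma_s(r)\, s$. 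The inverse $\sigma_s^{-1}$ sending $x_i \mapsto q_i(s)^{-1} x_i$ is defined because $q_i(s)^{-1} \in [Q]$.

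The central step is to verify that $\sigma_s$ (and $\sigma_s^{-1}$) preserves any subring $R' \supseteq S \cup Q'$, for this gives $sR' \subseteq R's$ and, symmetrically, $R's \subseteq sR'$. Since $[Q] \cup \{x_i\} \subseteq R'$ we have $\sigma_s(x_i) = q_i(s) x_i \in R'$, and $\sigma_s$ fixes $Q$; for each generator $s' \in S \subseteq R'$, the image $\sigma_s(s')$ is characterized by $ss' = \sigma_s(s')\, s$ and can be produced through the commutation relations inside $R'$. The multiplicativity of $\sigma_s$ together with the skew-derivation identity $(\sigma_s - \mathrm{id})(ab) = \sigma_s(a)(\sigma_s - \mathrm{id})(b) + (\sigma_s - \mathrm{id})(a)\, b$ then propagates the closure from the generating set $S \cup Q \cup \{x_i\}$ to all of $R'$. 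With the normalization condition verified, Proposition~\ref{multi}(ii) yields at once that $S$ is a denominator set of $R$ and that $RS^{-1}$ is F-noetherian.

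The main obstacle is justifying $\sigma_s(R') \subseteq R'$ rigorously, since the expansion $r = \sum_\alpha k_\alpha x^\alpha$ of a general $r \in R'$ involves coefficients $k_\alpha \in K$ that need not lie in $R'$. The saving grace is that every twist factor $q(s,\alpha)$ is a central unit in $[Q] \subseteq R'$, so $\sigma_s(r)$ is obtained from $r$ by ``scaling each monomial by an element already in $R'$'', and the $[Q]$-linearity together with the skew-derivation identity allow one to build this twist entirely out of ring operations on the generating set $S \cup Q \cup \{x_i\}$ without ever isolating the individual coefficients $k_\alpha$.
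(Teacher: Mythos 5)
Your overall route is the same as the paper's: reduce to Proposition~\ref{multi}(ii) by exhibiting a finite set (your $Q\cup\{x_1,\dots,x_n\}$) such that $sR'=R's$ for every subring $R'$ of $R$ containing it together with $S$. The genuine gap is precisely the step you yourself flag as the ``main obstacle'': you never prove $\sigma_s(R')\subseteq R'$, and the proposed fix --- propagating closure ``from the generating set $S\cup Q\cup\{x_i\}$ to all of $R'$'' via multiplicativity and the skew-derivation identity --- cannot work, because $R'$ is an \emph{arbitrary} subring containing that set, not the subring generated by it. An element of $R'$ can be a sum of monomials whose $K$-coefficients do not lie in $R'$ and whose twist factors differ, and then the twisted element need not lie in $R'$. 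Concretely, in the quantum plane $x_2x_1=qx_1x_2$ over $K=\mathbb{Q}(q,k,c)$ take $s=x_1$, $S=\{x_1^m\}$, $Q=\{q,q^{-1}\}$, and let $R'$ be the subring generated by $q^{\pm1},x_1,x_2$ and $u=kx_2+cx_1^2$; then $su=(q^{-1}kx_2+cx_1^2)s$, and a degree count in $k,c$ shows $q^{-1}kx_2+cx_1^2\notin R'$, so $sR'\neq R's$ although $R'\supseteq S\cup Q\cup\{x_1,x_2\}$. Hence the normalization hypothesis of Proposition~\ref{multi}(ii), in the form you set out to verify, fails for some admissible $R'$, and your argument cannot be completed as written. (A lesser issue: $\sigma_s$ is introduced ``on formal grounds,'' but $R$ need not be free over $K$ on the $x_i$, so a map defined by its effect on $K$ and the $x_i$ is not automatically well defined; this can be sidestepped by working with chosen expressions, whereas the closure claim cannot.)

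For comparison, the paper's own proof of Corollary~\ref{3.9} verifies the condition of Proposition~\ref{multi}(ii) only on elements of $R'$ that are single terms $kx_{i_1}\cdots x_{i_m}$ with $k\in K$: such a term picks up a single unit $q\in[Q]\subseteq R'$, so $sr'=qr's$ with $qr'\in R'$, and neither the coefficient $k$ nor the $x_i$ need lie in $R'$. Your additional machinery (enlarging $Q$ by the $x_i$, the global automorphism $\sigma_s$, the skew-derivation identity) does not extend that computation to sums of terms carrying distinct twist factors inside an arbitrary $R'$, and that is exactly where the difficulty sits. To make an argument of this shape rigorous you would have to restrict attention to subrings $R'$ generated by $S$, $Q$ and twist-homogeneous elements (elements $r$ with $sr=q_rrs$, $q_r\in[Q]$), for which termwise twisting does stay in $R'$, and then check that the reduction inside Proposition~\ref{multi} can always be arranged to use only such subrings; asserting $sR'=R's$ for every subring containing a fixed finite set is too strong.
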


\begin{proof}
This follows very easily from part (ii) proved above as follows.  Let $R'$ be any subring of $R$ containing both $S$ and $Q$.  Let $s \in S$ and let $r' \in R'$. For illustration, say  $sr'= s(kx_1x_3x_2x_1) \in sR'$ for some $k \in K$.  Recall our hypotheses that each $s x_i= q_i(s) x_i s$ for some $q_i(s)$ in the subring  $[ Q]$ generated by $Q$,  
and that the centralizer of $K$ contains both $S$ and each $x_i$. 
Hence $ s r'= s(kx_1x_3x_2x_1) = q (kx_1x_3x_2x_1) s = qr' s$ for some  $q \in [ Q]$. 
Thus $sr' \in R's$ since $R'$ contains $Q$.  This illustrates that $sR'\subset R's$.  
Similarly $R's \subset sR'$ since each $q_i(s)$ in $s x_i= q_i(s)x_i s$ is a unit in $[ Q]$. Consequently,  $sR'= R's$, so we can apply part (ii) proved above to obtain our result.
\end{proof}

\vspace{2em}
\section{Quantum Iterated Ore Extensions and Skew Laurent Extensions} \label{quant}
\begin{secc} Recall that a \emph{skew polynomial ring} (in one variable) $R=S[x; f, d]$ is the ring of polynomials in $x$ with left coefficients in $S$, i.e, $R$ is the free (left) $S$-module with basis 
$\{x^n \mid n=0,1,2, \dots \}$ such that
\begin{enumerate}[(i)]
\item $xs=f(s)x +d(s)$ for all $s \in S$,
\item $f$ is an automorphism of $S$ (sending 1 to 1), and
\item $d$ is a (left) $f$-derivation of $S$.  That is, $d: S \rightarrow S$ is an additive map such that  $d(xy)=f(x)d(y)+d(x)y$  for  $x,y \in S$.
\end{enumerate}
Such skew polynomial rings $S[x; f, d]$ are also called \emph{Ore extensions} in which $f$ is an automorphism of $S$ (since $f$ is only assumed to be an endomorphism of $S$ for general Ore extensions).  If $f$ is the identity, $S[x;f, d]$ is written simply as $S[x;d]$. 

Recall that if $S$ is a noetherian ring, then so is the skew polynomial ring $S[x; f, d]$. 
This theorem is the \textbf{Hilbert Basis Theorem} for skew polynomial rings. For more details, see  
[G-W, p. 13]  (or [M-R, Thm. 1.2.9], [B-G, pp. 8-9], [K, p. 19]).
\end{secc}
\begin{prop}\leavevmode
\begin{enumerate}[(i)]
\item Let $R=S[x; d]$ be a differential skew polynomial ring. 
If $S$ is F-noetherian, then $R$ may \emph{fail} to be F-noetherian.
\item Let $R=S[x; d]$ be a differential skew polynomial ring.  
Suppose $d$ is locally nilpotent (i.e, $d$ acts nilpotently on every element $s$ of $S$), 
If $S$ is F-noetherian, then $R$ is F-noetherian.  Moreover, if $S$ is tightly F-noetherian, then $R$ is directed F-noetherian.�
\end{enumerate}
\end{prop}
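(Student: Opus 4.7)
For part~(i), I would exhibit a concrete counterexample. Take $k$ a prime field, $S = k[t_1, t_2, \ldots]$ the commutative polynomial ring in countably many variables (F-noetherian, indeed tightly so, because commutative), and equip $S$ with the $k$-linear derivation $d$ determined by $d(t_i) = t_{i+1}$ and extended by the Leibniz rule. In $R = S[x;d]$ the identity $xt_i - t_ix = t_{i+1}$ shows that any subring $T$ of $R$ containing $\{x, t_1\}$ must contain every $t_n$ via iterated commutators; together with $k \subset T$ (since $k$ is prime and $1 \in T$) this forces $T \supset S[x;d] = R$. It remains to verify that $R$ is not noetherian: the right ideal $\sum_{i \geq 1} t_i R$ cannot be finitely generated, because a putative expression of $t_N$ (for $N$ larger than all $i_j$) as a right $R$-linear combination of $t_{i_1}, \ldots, t_{i_m}$ would, on reading off $x$-constant terms, force the relation $t_N \in (t_{i_1}, \ldots, t_{i_m}) \subset S$, which is false in a polynomial ring. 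Hence no noetherian subring of $R$ contains $\{x, t_1\}$, so $R$ fails to be F-noetherian.

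For part~(ii), let $X \subset R$ be a finite subset. Writing each element of $X$ as a polynomial in $x$ with coefficients in $S$, let $Y \subset S$ denote the finite set of all coefficients that appear (together with $1$). Local nilpotency of $d$ makes the $d$-orbit
\[
Y^{*} := \bigcup_{n \geq 0} d^{n}(Y)
\]
a finite, $d$-stable subset of $S$, so by the Leibniz rule the subring $\langle Y^{*} \rangle \subset S$ generated by $Y^{*}$ is also $d$-stable, and $\langle Y^{*} \rangle[x;d]$ is a well-defined subring of $R$ that contains $X$.

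When $S$ is tightly F-noetherian, $\langle Y^{*} \rangle$ is noetherian because $Y^{*}$ is finite, and the Hilbert Basis Theorem for skew polynomial rings recorded in (4.1) gives that $\langle Y^{*} \rangle[x;d]$ is noetherian. For the directed conclusion, set $R(X) := \langle Y_{X}^{*} \rangle[x;d]$, where $Y_{X}$ is the coefficient set of $X$; the chain $X \subset X' \Rightarrow Y_{X} \subset Y_{X'} \Rightarrow Y_{X}^{*} \subset Y_{X'}^{*} \Rightarrow R(X) \subset R(X')$ yields monotonicity, so $R$ is directed F-noetherian.

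The main obstacle is the general F-noetherian case, in which $\langle Y^{*} \rangle$ is only finitely generated and may fail to be noetherian. Here F-noetherianness of $S$ applied to the finite set $Y^{*}$ produces a noetherian subring $S_{0} \supset Y^{*}$, but $S_{0}$ need not be $d$-stable, so $S_{0}[x;d]$ is not even defined. The crucial step is to upgrade $S_{0}$ to a noetherian $d$-stable subring $S_{1} \subset S$ containing $Y^{*}$, after which $S_{1}[x;d]$ is a noetherian subring of $R$ containing $X$. My planned approach is to close $S_{0}$ under $d$ by passing to the subring generated by $\bigcup_{n \geq 0} d^{n}(S_{0})$ and to control the resulting closure by combining local nilpotency of $d$ with a second application of F-noetherianness, applied to the $d$-orbit of a finite generating set of $S_{0}$ in the case that $S_{0}$ can be chosen finitely generated as a ring; making this upgrade rigorous in full generality is the delicate point I expect to require the most care.
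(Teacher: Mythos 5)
Your part (i) is correct and is in substance the paper's own counterexample (a commutative polynomial ring in infinitely many variables with $d(t_i)=t_{i+1}$); in fact your verification that $R$ is not noetherian, via the right ideal $\sum_i t_iR$ and comparison of $x$-constant terms in the free left $S$-module $\bigoplus_n Sx^n$, is more detailed than the paper's one-line justification. (One small slip: for $k=\mathbb{Q}$ a subring containing $1$ need not contain $k$, so take $k=\mathbb{F}_p$ or $\mathbb{Z}$; alternatively run your right-ideal argument inside any subring $T$ containing $x$ and all $t_i$, which avoids needing $T=R$.) Likewise, your proof of the second assertion of (ii) — tightly F-noetherian implies directed F-noetherian — is correct and is the paper's method (Theorem 4.3): collect the $S$-coefficients, close them under $d$ (finite by local nilpotency), observe that the generated subring $\langle Y^*\rangle$ is $d$-stable by Leibniz, and invoke the skew Hilbert Basis Theorem of (4.1), with monotonicity of $X\mapsto\langle Y_X^*\rangle[x;d]$ giving directedness.

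The genuine gap is the first assertion of (ii): you do not prove that $S$ F-noetherian implies $R$ F-noetherian, and you say so yourself. The step you defer — replacing a noetherian subring $S_0\supseteq Y^*$ by a noetherian \emph{$d$-stable} subring of $S$ containing $Y^*$ — is exactly what is needed, and your sketched repair does not close it: the subring generated by $\bigcup_{n\ge 0}d^n(S_0)$ is $d$-stable but there is no reason for it to be noetherian, and F-noetherianness only places a finite set inside \emph{some} noetherian subring, which need not be finitely generated, so your "second application of F-noetherianness to the $d$-orbit of a finite generating set of $S_0$" has nothing to feed on (and even when $S_0$ is finitely generated, the $d$-orbit closure of its generators only generates a $d$-stable subring, which is noetherian only under the \emph{tight} hypothesis). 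For comparison, the paper's Theorem 4.3 handles this case by choosing the noetherian subring $S^*$ to contain the finite $d$-stable set $D(B'')$ and then asserting that $d$ preserves $S^*$, so that $S^*[x;d]$ can be formed; no further argument for that $d$-stability is supplied there. So you have put your finger precisely on the thinnest step of the published argument, but flagging it is not the same as supplying it: as written, your proposal establishes part (i) and only the "tightly $\Rightarrow$ directed" half of part (ii).
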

\begin{proof}\leavevmode
\begin{enumerate}[(i)]
\item We shall give an example. Let $S= \ZZ[x_0, x_1, \dots , x_n, \dots]$ �be the commutative ring in infinitely many variables. Let $R=S[t; d]$ be the differential Ore extension where $d$ is the 
$S$-linear derivation of $S$ such that� $d(x_i)=x_{i+1}$.  Then $R$ is not noetherian because 
$R=S[t; d]$ is a free $S$-module and $S$ is not noetherian.  

Now $\{x_0 , t\}$ and $1$ generate $R$ as a ring. Hence $R$ is not F-noetherian although $S$ is F-noetherian since $S$ is commutative.
\item This is a special case of Theorem \ref{4.3} as shown next.
\end{enumerate}
\end{proof}
\begin{thm}\label{4.3}
Let $R := S[x_1; f_1, d_1][x_2; f_2, d_2] \dots  [x_n; f_n, d_n]$ be an iterated Ore extension over a ring $S$ such that
\begin{enumerate}[(1)]
\item for each $j>i$,��$(f_j)(x_i) =� q_{ji}(x_i)$ for some unit $q_{ji}�\in S$,
\item each $f_i$ is the identity on $S$, 
\item each $d_i(S) \subset S$, and each $d_i$ is locally nilpotent on $S$.
\end{enumerate}
Then
\begin{enumerate}[(i)]
\item If $S$ is F-noetherian, then so is $R$.
\item If $S$ is tightly F-noetherian, then $R$ is directed F-noetherian.
\end{enumerate}

We assume no restrictions on each $d_j (x_i)$  for $j>i$.
\end{thm}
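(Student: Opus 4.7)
The strategy is, for each finite $X \subset R$, to realize a noetherian subring of $R$ containing $X$ as an iterated Ore extension
\[
R_0 \;:=\; S_0[x_1;\mathrm{id},\,d_1|]\cdots[x_n;f_n|,\,d_n|]
\]
over a suitable noetherian subring $S_0 \subset S$. Provided $S_0$ is noetherian, $d_i$-stable for every $i$, and contains both the units $q_{ji}^{\pm1}$ ($j>i$) and the left-$S$ coefficients of each $d_j(x_i) \in S[x_1,\dots,x_{j-1}]$ ($j>i$), the restrictions $f_j|$ and $d_j|$ descend to $S_0[x_1,\dots,x_{j-1}]$, so the iterated Hilbert Basis Theorem of (4.1) makes $R_0$ noetherian, and visibly $X \subset R_0$.

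\emph{Choosing the relevant finite data.} Expand each element of $X$ as a left-$S$ polynomial in the PBW monomials $x_1^{a_1}\cdots x_n^{a_n}$ and let $F_0 \subset S$ collect (a) all these coefficients, (b) each $q_{ji}^{\pm1}$, and (c) the left-$S$ coefficients in the expansion of each $d_j(x_i)$ for $j>i$; items (b) and (c) involve only finitely many fixed elements of $S$, independent of $X$. Using that the $d_i$ are (commuting, as in Theorem~1.2, Case~1) locally nilpotent derivations on $S$, the set
\[
F^{\ast} \;:=\; \{\,d_1^{k_1}\cdots d_n^{k_n}(s)\ :\ s \in F_0,\ k_i \ge 0\,\}
\]
is finite and is closed under every $d_i$. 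Because $f_i|_S = \mathrm{id}$, the Leibniz rule forces any subring of $S$ generated as a ring by $F^{\ast}$ to be $d_i$-stable for all $i$.

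\emph{Producing $S_0$ and the directed version.} In part (ii), tight F-noetherianness of $S$ makes the subring of $S$ generated by $F^{\ast}$ noetherian, and the chain of constructions $X \mapsto F_0 \mapsto F^{\ast} \mapsto S_0 \mapsto R_0$ is monotone in $X$: if $X \subset Y$ then $F_0(X) \subset F_0(Y)$, hence $F^{\ast}(X) \subset F^{\ast}(Y)$, $S_0(X) \subset S_0(Y)$, and $R_0(X) \subset R_0(Y)$, witnessing directed F-noetherianness of $R$. In part (i) we only know that some noetherian subring of $S$ contains $F^{\ast}$; this must be refined to a noetherian, $d_i$-stable $S_0$. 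The plan is to adjoin iteratively the $d_i$-images of each new generator (a finite set by local nilpotence) and to absorb each enlargement into a noetherian subring of $S$ via F-noetherianness.

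\emph{Main obstacle.} The nontrivial point is this last refinement: subrings of noetherian rings are not generally noetherian, so one cannot simply take the $d_i$-closure inside an initial noetherian subring and hope it is still noetherian. The interplay of local nilpotence (which bounds each new orbit to a finite set) and F-noetherianness of $S$ (which hosts each such enlargement in some noetherian subring) is what allows this absorption to terminate in a noetherian, $d_i$-stable $S_0 \subset S$ containing $F^{\ast}$. Once $S_0$ is secured, iterated Hilbert basis for Ore extensions finishes both (i) and (ii).
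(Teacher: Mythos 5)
Your choice of the finite data and your treatment of part (ii) coincide with the paper's own argument: the paper likewise collects the left-$S$ PBW coefficients of the given finite set, the units $q_{ji}^{\pm 1}$, and the $S$-coefficients of the $d_j(x_i)$ for $j>i$, closes this finite set under the commuting locally nilpotent derivations to get a finite $d$-closed set (its $D(B'')$, your $F^{\ast}$), and in the tightly F-noetherian case takes the subring generated by it, which is noetherian and $d_i$-stable by the Leibniz rule, with monotonicity in $X$ giving directedness. So for (ii) your plan is essentially the paper's proof.

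For part (i), however, there is a genuine gap rather than a proof. Everything hinges on producing a noetherian subring $S_0\subseteq S$ which both contains $F^{\ast}$ and satisfies $d_i(S_0)\subseteq S_0$ for all $i$, using only that $S$ is F-noetherian; you correctly identify this as the main obstacle, but the iterative ``absorption'' you sketch does not close it. Each absorption step replaces the current ring by a noetherian subring of $S$ containing it, and that step destroys $d_i$-stability: the new noetherian subring has in general infinitely many elements whose $d_i$-images fall outside it, so ``adjoin the $d_i$-images of each new generator'' is no longer a finite operation, and after re-closing under the $d_i$ you are back to a possibly non-noetherian $d_i$-stable ring and must absorb again; nothing in your proposal shows this alternation terminates, and local nilpotence alone does not make it terminate. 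For comparison, the paper does not attempt any such iteration: having arranged that $D(B'')$ is a finite set closed under every $d_i$, it takes an arbitrary noetherian subring $S^{\ast}\supseteq D(B'')$ and simply asserts that each $d_i$ preserves $S^{\ast}$ --- an assertion that is automatic when $S^{\ast}$ is the subring generated by $D(B'')$ (which is exactly your case (ii)), but which for a general noetherian $S^{\ast}$ is precisely the point you flagged. So you have located the delicate step correctly, but locating it is not resolving it: to complete part (i) you must either exhibit a noetherian $d_i$-stable subring of $S$ containing $F^{\ast}$, or show that $d_i$-stability of $S^{\ast}$ can be dispensed with when verifying that the subring of $R$ generated by $S^{\ast}$ and $x_1,\dots,x_n$ is noetherian; as written, neither is done.
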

\begin{proof}\leavevmode
\begin{enumerate}[(i)]
\item First we note that since $R$ is an iterated Ore extension of $S$, every element of $R$ is an $S$-sum of the standard PBW basis in all $x_i$, namely the monomials 
$(x_1)^{e_1},  x_2^{e_2}  \dots (x_n)^{e_n}$.

Given any finite subset of $R$, we collect carefully all ``relevant" constants relative to $B$ in $S$.
Such finitely many constants are contained in a noetherian subring $S^*$ of $S$. Then we consider the noetherian subring  $R^*= S^*[x_1; f_1, d_1][x_2; f_2, d_2] \dots  [x_n; f_n, d_n]$ which will contain the given finite subset of $R$.

For clarity, first \textbf{\emph{we assume that each $d_i$ is zero on $S$}}. Thus each $x_i$ commutes the elements of $S$ (because each $f_i$ is the identity on $S$). Now let $B$ be any finite subset of $R$. Then each $b \in B$ can be written as an $S$-sum of the above standard PBW basis of $R$. Let $B'$ be set of all $S$-coefficients of all $b \in B$ appearing in S. �To $B'$, we add all $q_{ji}$ and their inverses together with 

(*)  the coefficients appearing in $S$ of all $\{d_j(x_i) \vert$ for $j>i\}$ in terms of the standard PBW basis of $R$.

Let $B''$ be the resulting finite subset of $S$. Since $S$ is F-noetherian, $B''$ is contained in a noetherian subring $S^*$ of $S$.  It is easy to check that $R^*=S^*[x_1][x_2; f_2, d_2] \dots  [x_n; f_n, d_n]$ is a \emph{well-defined} iterated Ore extension. Finally, this subring of $R$ is noetherian and evidently containing the given finite set $B$ of $R$.
\begin{description}
\item[Case 1:] In general: recall that $f_i$ is the identity on $S$, each $d_i(S) \subset S$, and $d_i$ are commuting locally nilpotent on $S$. Now the proof in the general case is very similar to the proof in the above special case (with each $d_i=0$) except that we need to enlarge the finite set $B''$  to  
$D(B'')$ which is the \emph{union} of  $B''$ with the following set
$((d_1)^{e_1}(d_2)^{e_2} \dots (d_n)^{e_n})(x) $ 
with $x \in B''$ and each $e_i \in \mathbb{N}$. 
Note that $D(B'') \subset S$ since each $d_i(S) \subset S$.
Moreover, $D(B'')$ is a finite set because the $d_i$ are commuting locally nilpotent derivations on $S$. Then, as in the special case above, we take any noetherian subring $S^*$ of $S$ containing $D(B'')$. 
Now each $d_i$ preserves $S^*$ and each $f_i$ fixes the elements of $S^*$.
So we can form the \emph{well-defined} iterated Ore extension $R^*=S^*[x_1][x_2; f_2, d_2] \dots  [x_n; f_n, d_n]$ which is a noetherian subring of $R$ containing the given finite set $B$ of $R$.
\item[Case 2:] The proof is very similar to the proof in case 1 but by enlarging $B''$ to
$D(B'')$ which is the \emph{union} of $B''$ with following set:
\[\{d_{i_1}, d_{i_2} \dots d_{i_k} (x) \vert x \in B'', \text{ and each } i_k \in \{1, 2,  \dots , n\}\}.\]
Note our hypotheses in case 2 makes sure that $D(B'')$ is a finite subset of $S$.
\end{description}
\item By the proof of (i), for each finite set $B$, we have constructed the corresponding finite set $B''$ in $S$ such that if  $X \subset Y$ are finite subsets of $R$, we have $X'' \subset Y''$, whence 
$D(X'') \subset D(Y'')$.  Since $S$ is tightly F-noetherian, $D(X'')$ and $D(Y'')$ generate noetherian subrings $D(X'')^*$ and $D(Y'')^*$ in $R$. Moreover, we have $D(X'')^* \subset D(Y'')^*$. Hence $R$ is directed F-noetherian. This proves Theorem \ref{4.3}.
\end{enumerate}
\end{proof}
\begin{corr} Let  $R= S[x_1; f_1, d_1][x_2; f_2, d_2] \dots [x_n; f_n, d_n]$ be an iterated Ore extension over a ring $S$ such that
\begin{enumerate}[(1)]
\item for all $j>i$,�$(f_j)(x_i) =� q_{ji} (x_i)$ for some unit $q_{ji}$ in $S$, and
\item each $x_i$ commutes with the elements of S, 
\end{enumerate}
Then
\begin{enumerate}[(i)]
\item if $S$ is F-noetherian, then so is $R$, and 
\item if $S$ is tightly F-noetherian, then $R$ is directed F-noetherian. 
\end{enumerate}
\end{corr}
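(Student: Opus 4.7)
The plan is to derive this corollary directly from Theorem~\ref{4.3} by showing that commutativity of each $x_i$ with $S$ forces the hypotheses of that theorem. The key observation is that the defining relation of each Ore extension layer reads $x_i s = f_i(s) x_i + d_i(s)$ for all $s \in S$, while hypothesis (2) asserts $x_i s = s x_i$. Since $\{1, x_i\}$ is part of the free left $S[x_1;f_1,d_1]\cdots[x_{i-1};f_{i-1},d_{i-1}]$-basis of the $i$-th layer, comparing coefficients yields $f_i(s) = s$ and $d_i(s) = 0$ for every $s \in S$. Thus each $f_i$ is the identity on $S$, and each $d_i$ vanishes on $S$.

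Now I would verify the three conditions of Theorem~\ref{4.3}. Condition (1) of that theorem is exactly our condition (1). Condition (2) is precisely what we extracted from commutativity: each $f_i$ fixes $S$ pointwise. For condition (3) (Case~1), we need $d_i(S) \subset S$, which holds trivially since $d_i(S) = 0 \subset S$; we need the $d_i$ to be commuting locally nilpotent derivations on $S$, which is automatic because $d_i|_S = 0$ makes the $d_i$ trivially commute on $S$ and act as the zero map (which is locally nilpotent). Note that we impose no restriction on the values $d_j(x_i)$ for $j > i$, matching the final remark of Theorem~\ref{4.3}.

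With all hypotheses of Theorem~\ref{4.3} verified, conclusions (i) and (ii) of that theorem give exactly (i) and (ii) of the corollary: if $S$ is F-noetherian then so is $R$, and if $S$ is tightly F-noetherian then $R$ is directed F-noetherian. Since the deduction is a direct specialization, there is no genuine obstacle here; the only subtle point worth emphasizing is the initial computation showing that commutativity of $x_i$ with all of $S$ forces both $f_i|_S = \mathrm{id}_S$ and $d_i|_S = 0$, which uses the fact that $R$ is a free left module on the PBW monomials in the Ore-extension construction.
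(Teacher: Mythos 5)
Your proposal is correct and follows essentially the route the paper intends: the corollary is a direct specialization of Theorem~4.3, whose proof even begins with exactly this situation (each $d_i$ vanishing on $S$, so each $x_i$ commutes with $S$). Your extra verification that commutativity of $x_i$ with $S$ forces $f_i|_S=\mathrm{id}_S$ and $d_i|_S=0$, by comparing coefficients in the free left module structure of the Ore extension, is a correct and worthwhile justification of a step the paper leaves implicit.
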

\begin{remark} For comparison, we record now Corollary 5.7 of the next section.

Let  $R= S[x_1; f_1, d_1][x_2; f_2, d_2] . . .  [x_n; f_n, d_n]$ be an iterated Ore extension over a ring $S$ such that
\begin{enumerate}[(1)]
\item for all $j>i$,�$(f_j)(x_i) =� q_{ji} (x_i)$ for some unit $q_{ji} \in S$, and 
\item for all $i$, �$ x_i S  + S = S + S x_i$, and $S = A[z_1, \dots, z_m]$ (a finitely generated ring over a subring $A$) such that each $x_i$ commutes with the elements of $A$,  
\end{enumerate}
then, if $S$ is F-noetherian, then so is $R$.
\end{remark}
\begin{remark}\label{4.6} \leavevmode
\begin{enumerate}[(i)]
\item If $S$ is F-noetherian, then $R=S[x, x^{-1}; f]$ may fail to be F-noetherian.
\item Let $R=S[x, x^{-1}; f]$  be a skew-Laurent ring extension such that 
for all $s \in S$, $f(s)= q_s s$ for some unit $q_s \in S$ fixed under the automorphism $f$.
If $S$ is tightly F-noetherian, then $R=S[x, x^{-1}; f]$ is directed F-noetherian.
\end{enumerate}
\end{remark}
\begin{proof}\leavevmode
\begin{enumerate}[(i)]
\item Let $S=\mathbb{Z}[a_0, a_1, a_{-1},  a_2, a_{-2},\dots, a_n, a_{-n},\dots]$.
 Let $R=S[x, x^{-1}; f]$  (where $f$ is an automorphism) be the skew-Laurent ring such that $f(a_i)=a_{i+1}$. That is, $x. (a_i) =f(a_i) x$.  Moreover, $x^m (a_i)= f^{m}(a_i) x^i$ for all $m, i \in \mathbb{Z}$.  Since $x a_i x^{-1}= a_{i+1}$ and  $x^{-1} a_i x= a_{i-1}$,  it follows that $a_0,  x , x^{-1}$  generate $R$.  But $R$ is not noetherian since $R$ is a free $S$-module and $S$ is not noetherian. Hence $R=S[x, x^{-1}; f]$  is not F-noetherian while $S$ is F-noetherian being commutative.  
\item For each element $b$ of $R$, let $C(b)$ be the set of all $S$-coefficients appearing in writing $b$ as a polynomial in $x$ and $x^{-1}$.  Now, for every $s \in C(b)$, $f(s)=q_s s$ for some unit $q_s \in S$ fixed under $f$. Let $Q(b) = C(b) \cup \{q_s \vert s \in C(b)\}$ which is a finite subset of $S$.

Now let $B$ be a finite subset of $R$.  Let $Q(B)$ be the union of all $Q(b)$ defined above as $b$ varies over $B$.  Let $S^*$ be the subring generated by $Q(B)$. 
Since for all $s \in S(B)$, $q_s$ is fixed under $f$, and $f^{-1}(s)=(q_s)^{-1}s$, it follows that $f(S^*) \subset S^*$  and $f^{-1} (S^*) \subset S^*$  Hence $f$ restricts to an automorphism of $S^*$.
But $S^*$ is noetherian since it is generated as a ring by a finite subset of $R$ which is tightly F-noetherian.  Hence the skew-Laurent subring $R^*=S^*[x, x^{-1}; f]$ is noetherian  [B-W, Thm. 1.17].
Finally, it is easily seen that $R^*=S^*[x, x^{-1}; f]$ contains the given finite subset $B$ of $R$.
This proves Remark \ref{4.6}.
\end{enumerate}
\end{proof}
\begin{prop}\label{4.7} Let $R=S[x_1, (x_1)^{-1}; f_1] [x_2, (x_2)^{-1}; f_2] \dots [x_n, (x_n)^{-1}; f_n]$ 
be an iterated skew-Laurent ring extension such that 
\begin{enumerate}[(i)]
\item for each $ j>i$, $(f_j)(x_i) =� q_{ji} x_i $ for some central unit $q_{ji}$ �in $S$, and
\item  each $f_i$ is the identity on $S$.
\end{enumerate}
If $S$ is F-noetherian, then so is $R$.
\end{prop}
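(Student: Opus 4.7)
The plan is to adapt the construction used in Theorem \ref{4.3}(i) and Remark \ref{4.6}(ii) to the iterated skew-Laurent setting: given any finite subset $B$ of $R$, I will manufacture a noetherian subring of $R$ containing $B$. Since each $f_i$ is the identity on $S$, every $x_i$ commutes with $S$, so $R$ is free as a left $S$-module on the PBW-type monomials $x_1^{e_1} x_2^{e_2} \cdots x_n^{e_n}$ with $(e_1,\dots,e_n) \in \mathbb{Z}^n$. Writing each $b \in B$ in this standard form, I collect the finite set $C(B) \subseteq S$ of all $S$-coefficients that appear.

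Next, I enlarge $C(B)$ to
\[
T(B) \; := \; C(B) \;\cup\; \{\, q_{ji},\; q_{ji}^{-1} \,:\, 1 \le i < j \le n \,\},
\]
still a finite subset of $S$. Since $S$ is F-noetherian, $T(B)$ is contained in some noetherian subring $S^* \subseteq S$. I then form the candidate
\[
R^* \; := \; S^*[x_1, x_1^{-1}; f_1][x_2, x_2^{-1}; f_2] \cdots [x_n, x_n^{-1}; f_n],
\]
which by construction contains $B$, and I claim $R^*$ is a well-defined noetherian subring of $R$.

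The only real verification is that the iterated skew-Laurent construction makes sense over $S^*$, i.e. that at stage $j$ the automorphism $f_j$ restricts to a bona fide automorphism of the previously built ring
\[
R^*_{j-1} \; := \; S^*[x_1, x_1^{-1}; f_1] \cdots [x_{j-1}, x_{j-1}^{-1}; f_{j-1}].
\]
This is where I expect the main (but mild) obstacle. On $S \supseteq S^*$ we have $f_j = \mathrm{id}$, so $f_j$ preserves $S^*$ pointwise. On the other generators, $f_j(x_i) = q_{ji} x_i$ and $f_j(x_i^{-1}) = q_{ji}^{-1} x_i^{-1}$ for $i < j$, and both $q_{ji}$ and $q_{ji}^{-1}$ lie in $S^* \subseteq R^*_{j-1}$ by our enlargement to $T(B)$. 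Centrality of each $q_{ji}$ in $S$ (and hence in $R^*_{j-1}$, since the $x_k$ commute with $S$) is what guarantees that the assignment on generators respects the defining relations of $R^*_{j-1}$ and therefore extends to an automorphism; similarly $f_j^{-1}$ is given by $x_i \mapsto q_{ji}^{-1} x_i$, so $f_j$ is actually invertible on $R^*_{j-1}$.

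Once this is in place, the iterated Hilbert basis theorem for skew-Laurent rings (e.g.\ [B-W, Thm.~1.17]) applied $n$ times, starting from the noetherian base $S^*$, shows that $R^*$ is noetherian. Since $B \subseteq R^*$ and $B$ was an arbitrary finite subset of $R$, this establishes that $R$ is F-noetherian, proving Proposition~\ref{4.7}.
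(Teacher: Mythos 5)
Your proposal is correct and follows essentially the same route as the paper: collect the $S$-coefficients of the elements of $B$ in the Laurent PBW form, adjoin all $q_{ji}$ and their inverses, place the resulting finite set in a noetherian subring $S^*$ (possible since $S$ is F-noetherian), and observe that $S^*[x_1,x_1^{-1};f_1]\cdots[x_n,x_n^{-1};f_n]$ is a well-defined iterated skew-Laurent extension that is noetherian by the skew-Laurent Hilbert basis theorem and contains $B$. Your explicit check that each $f_j$ restricts to an automorphism of the partially built subring is a welcome elaboration of what the paper leaves as "well-defined," but it is the same argument.
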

\begin{proof} The proof is quite similar to the proof of Theorem \ref{4.3} (with no $d_i$s) as follows.
Briefly, for each finite subset $B$ of $R$, we collect all ``relevant" coefficients in $S$ from writing each element of $B$ as a polynomial in $x$ and $x^{-1}$.  To such finite coefficients in $S$, we add all  $q_{ji}$ and their inverses. The resulting finite subset of $S$ is contained in a noetherian subring $S'$ of $S$.   Now $R'=S'[x_1, (x_1)^{-1}; f_1] [x_2, (x_2)^{-1}; f_2] \dots [x_n, (x_n)^{-1}; f_n]$ is a well-defined iterated skew-Laurent ring extension. Moreover, $R'$ is noetherian by [B-W, Thm. 1.17] and $R'$ contains the given finite set $B$.  Hence $R$ is F-noetherian. This proves Proposition \ref{4.7}.
\end{proof}

\vspace{2em}
\section{Skew Quantum Ring Extensions} \label{skew}
\begin{defin} Following Passman in  [P, p. 180], a ring extension $R$ of a ring $S$ is called an almost centralizing extension of $S$ if $R=S[x_1, x_2, \dots, x_n]$ is generated as a ring over $S$ by finitely many elements ${x_1, x_2, \dots, x_n}$ such that
\begin{enumerate}[(i)]
\item each $x_i$ commutes with each element of $S$, and
\item for all $ i, j$, $[x_j,  x_i]  \in S +Sx_1+ \cdots+ Sx_n$.
\end{enumerate}
However, in [M-R, 8.6.6], the definition of an almost centralizing extension is more general in the sense that $R$ is generated as a ring over $S$ by finitely many elements $\{x_1, x_2, \dots, x_n\}$ where 
\begin{enumerate}[(i)]
\item $s x_i-x_i s  \in  S$ for each $s \in S$, and each $i$, and 
\item for all $i, j$, $[x_j,  x_i] \in S+Sx_1+ \cdots+ Sx_n$.
\end{enumerate}
But we shall stick to the first definition by Passman.
\end{defin}
\begin{thm}\label{5.2} Let $R=S[x_1, x_2, \dots, x_n]$  be an almost centralizing extension of a ring $S$ in the sense that $R$ is generated as a ring over $S$ by finitely many elements $\{x_1, x_2, \dots, x_n\}$ such that 
\begin{enumerate}[(i)]
\item each $x_i$ commutes with the elements of $S$.
\item for all  $i, j$,  $[x_j,  x_i]  \in S +Sx_1+ \cdots+ Sx_n$.
\end{enumerate}
If $S$ is $F$-noetherian, then so is $R$. 
Thus, all iterated almost centralizing extensions of an F-noetherian ring are F-noetherian.
\\ More generally, condition (ii) can be generalized to the following condition.
\\ (ii)'  For all $j > i$, there exist units $q_{ji}  \in S$ such that
\[(x_j x_i  - q_{ji} x_i  x_j)   \in S +Sx_1+ \cdots+ Sx_n \]
\end{thm}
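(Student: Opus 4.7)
The plan is to mimic the strategy used in the proof of Theorem \ref{4.3}: given any finite subset $B \subset R$, construct a noetherian subring $R^* \subset R$ containing $B$ by passing to a sufficiently rich noetherian subring $S^*$ of $S$. Since each $x_i$ commutes with all of $S$, condition (ii)' lets us rewrite any product $x_j x_i$ with $j>i$ as $q_{ji} x_i x_j$ plus an element of $S + Sx_1 + \cdots + Sx_n$. By induction on degree, every monomial in the $x_i$'s, and therefore every element of $R$, can be expressed as an $S$-linear combination of the standard monomials $x_1^{e_1} x_2^{e_2} \cdots x_n^{e_n}$.

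Now I would gather all "relevant" constants from $S$. First, fix an expression of each element of $B$ as an $S$-combination of standard monomials and let $B' \subset S$ be the finite set of coefficients that appear. Next, adjoin to $B'$ the units $q_{ji}$ together with their inverses $q_{ji}^{-1}$, and the finitely many $S$-coefficients $s_0^{(ji)}, s_1^{(ji)}, \dots, s_n^{(ji)}$ appearing in some fixed expression
\[ x_j x_i - q_{ji} x_i x_j \;=\; s_0^{(ji)} + s_1^{(ji)} x_1 + \cdots + s_n^{(ji)} x_n \]
for each pair $j > i$. Call the resulting finite set $B'' \subset S$. Because $S$ is F-noetherian, $B''$ is contained in some noetherian subring $S^*$ of $S$. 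Form the subring $R^* := S^*[x_1, \dots, x_n] \subset R$. Since $S^* \subset S$, each $x_i$ still commutes with $S^*$, and by the choice of $B''$ every relation $x_j x_i - q_{ji} x_i x_j$ lies in $S^* + S^* x_1 + \cdots + S^* x_n$ with $q_{ji}$ a unit of $S^*$. Hence $R^*$ is itself a generalized almost centralizing extension of $S^*$ in the sense of (ii)'. By construction $B \subset R^*$, since each element of $B$ is an $S^*$-combination of standard monomials.

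It remains to know that $R^*$ is noetherian. For the classical case (ii) (with all $q_{ji}=1$) this is Passman's theorem \cite[P]{} (see also \cite[M-R, 1.6.9]{}): one filters $R^*$ by total degree in the $x_i$'s and observes that the associated graded ring is a homomorphic image of the commutative polynomial ring $S^*[X_1,\dots,X_n]$ (which is noetherian), so the filtered ring $R^*$ is noetherian as well. The main obstacle is the (routine) extension of this argument to the skew case (ii)': one still filters by total degree, and now the associated graded ring is a homomorphic image of the \emph{quantum affine space} $S^*\langle X_1,\dots,X_n\rangle / (X_j X_i - q_{ji} X_i X_j)$, which is a finite iterated skew polynomial extension of $S^*$ (with each $X_i$ central over $S^*$ and the automorphisms given by $X_i \mapsto q_{ji} X_i$), hence noetherian by the Hilbert basis theorem for skew polynomial rings. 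Thus $R^*$ is a noetherian subring of $R$ containing $B$, proving that $R$ is F-noetherian. The final sentence on iterated almost centralizing extensions follows by induction, applying the theorem one layer at a time.
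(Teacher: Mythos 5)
Your overall strategy---fix expressions of the elements of a finite set $B$ as $S$-combinations of ordered monomials, adjoin the structure constants $q_{ji}$, $q_{ji}^{-1}$ and the coefficients of the $x_jx_i-q_{ji}x_ix_j$, place this finite set in a noetherian subring $S^*$ of $S$, and then show $R^*=S^*[x_1,\dots,x_n]$ is noetherian via the degree filtration---is exactly the paper's route: Theorem 5.2 is deduced there from Theorem 5.4(ii), Case 2, whose proof is this same reduction followed by a filtration argument (your situation is even simpler, since each $x_i$ commutes with all of $S$, so the paper's Lemma 5.5 is not needed). The gap is in the final step, the noetherianity of $R^*$. You present $\mathrm{gr}(R^*)$ as an image of the quantum affine space $S^*\langle X_1,\dots,X_n\rangle/(X_jX_i-q_{ji}X_iX_j,\; X_is-sX_i)$ and assert that this is an iterated skew polynomial extension of $S^*$ with automorphisms fixing $S^*$ and sending $X_i\mapsto q_{ji}X_i$. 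Such an Ore presentation exists only when the $q_{ji}$ are \emph{central} in $S^*$: for the would-be endomorphism $\sigma_j$ of $T_{j-1}=S^*[X_1;\dots]\cdots[X_{j-1};\dots]$, applying $\sigma_j$ to the relation $X_is=sX_i$ yields $q_{ji}sX_i$ on one side and $sq_{ji}X_i$ on the other, and since $T_{j-1}$ is a free left $S^*$-module on the ordered monomials this forces $q_{ji}s=sq_{ji}$ for all $s\in S^*$. The theorem, however, only assumes the $q_{ji}$ are units of $S$ (no centrality), and $S$ may be noncommutative, so the skew Hilbert Basis Theorem cannot be invoked as you state it.

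This is precisely the point where the paper uses a different tool: its Lemma 5.3, a $q$-analogue of McConnell--Robson's theorem on almost normalizing extensions, which needs only that $S$ is noetherian, $x_iS+S=S+Sx_i$, and $x_jx_i-q_{ji}x_ix_j\in S+Sx_1+\cdots+Sx_n$ for units $q_{ji}$---no Ore-extension structure and no PBW basis---and which is then applied to the associated graded ring (equivalently, one may apply it to $R^*$ directly, since your construction makes $R^*$ satisfy its hypotheses over $S^*$). So your argument is complete in the case where the $q_{ji}$ are central (for instance $S=k$ commutative, which covers most of the paper's examples), and your reduction to $S^*$ is correct and matches the paper; but to obtain the theorem in its stated generality you must replace the skew-Hilbert-basis step by an argument of the Lemma 5.3 (McConnell--Robson) type.
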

Theorem \ref{5.2} is a special case of Theorem \ref{5.4} case 2 part (ii) as shown below.

\begin{lem}\label{5.3} Let $R=S [ x_1, x_2, \dots, x_n ] $ be a finitely generated ring over a subring $S$ such that 
\begin{enumerate}[(i)]
\item for all $i$, $S + S x_i = x_i S+ S$, and 
\item for all $j>i$, $(x_j x_i� - q_{ji}� x_i . x_j)� \in  S + S x_1+ \cdots + S x_n$ for some units $q_{ji}$ in $S$.
\end{enumerate}
Then, if $S$ is noetherian, then so is $R$.
\end{lem}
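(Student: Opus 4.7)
My plan is to set up a degree filtration on $R$, compute the associated graded ring $\mathrm{gr}(R)$, show that it is noetherian, and then invoke the standard filtered-graded lifting theorem (e.g.\ [M-R, Thm.~1.6.9]): any ring with a positive exhaustive multiplicative filtration whose associated graded ring is noetherian is itself noetherian.

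Let $F_m$ be the left $S$-submodule of $R$ spanned by all monomials of length at most $m$ in $\{x_1,\dots,x_n\}$, so $F_0 = S$, $F_1 = S + Sx_1 + \cdots + Sx_n$, and $\bigcup_m F_m = R$. The multiplicativity $F_m F_\ell \subseteq F_{m+\ell}$ comes directly from condition (i): for any $s \in S$ we may write $x_i s = s' x_i + t$ with $s',t \in S$, so for any monomial $y$ of length $m$,
\[
x_i \cdot (s y) \;=\; s'(x_i y) + t y \;\in\; F_{m+1}+F_m \;=\; F_{m+1},
\]
and induction on monomial length extends this to arbitrary products of elements of the $F_k$'s.

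Passing to $\mathrm{gr}(R) = \bigoplus_{m \ge 0} F_m/F_{m-1}$ (with $F_{-1}=0$), its degree-zero piece is $S$, and it is generated over $S$ by the images $\bar{x}_1,\dots,\bar{x}_n$ in degree one. Condition (i) now reads $\bar{x}_i S = S \bar{x}_i$ inside $F_1/F_0$, and condition (ii) yields the \emph{exact} relation $\bar{x}_j \bar{x}_i = q_{ji}\, \bar{x}_i \bar{x}_j$ in $F_2/F_1$ for $j>i$, because the lower-order terms appearing in (ii) live in $F_1$ and so vanish in $F_2/F_1$. Build $\mathrm{gr}(R)$ as a tower $S = T_0 \subseteq T_1 \subseteq \cdots \subseteq T_n = \mathrm{gr}(R)$ where $T_k$ is the subring generated by $T_{k-1}$ and $\bar{x}_k$. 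Each $T_k$ is a homomorphic image of an Ore extension $T_{k-1}[y_k;\sigma_k]$, where $\sigma_k$ is an automorphism of $T_{k-1}$ defined on $S$ by the normalizing map from (i) and on each preceding $\bar{x}_i$ ($i<k$) by $\sigma_k(\bar{x}_i) = q_{ki}\, \bar{x}_i$. By the Hilbert basis theorem for iterated Ore extensions, each $T_k$ is noetherian, hence so is its quotient $T_k \twoheadrightarrow $ (image in $\mathrm{gr}(R)$), and in particular $\mathrm{gr}(R)$ is noetherian.

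The main technical obstacle is the preceding sentence: condition (i) only guarantees the \emph{existence} of a witness $s'$ with $x_k s - s' x_k \in S$, not its uniqueness, so the ``normalizing endomorphism'' $\sigma_k$ is not canonically well-defined on $S$. The standard remedy (familiar from almost-normalizing extensions in [P] and [M-R, \S1.6]) is to form the universal Ore extension $T_{k-1}[y_k;\sigma_k]$ using any set-theoretic choice of $\sigma_k$, then pass to the quotient by the true relations holding in $\mathrm{gr}(R)$ under $y_k \mapsto \bar{x}_k$; different choices of $\sigma_k$ yield rings all mapping onto the same subring of $\mathrm{gr}(R)$, so the quotient is unambiguous and noetherian. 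With noetherianness of $\mathrm{gr}(R)$ in hand, the filtered-graded lifting theorem immediately gives that $R$ is noetherian.
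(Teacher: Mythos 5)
Your overall route is the same one the paper takes (the paper simply adapts the proof of the almost-normalizing-extension theorem [M-R, Thm.\ 1.6.14]): put the standard filtration $F_m=(S+Sx_1+\cdots+Sx_n)^m$ on $R$, note multiplicativity via condition (i), observe that in $\mathrm{gr}(R)$ the images $\bar x_i$ genuinely normalize $S$ and satisfy $\bar x_j\bar x_i=q_{ji}\bar x_i\bar x_j$, and invoke the filtered--graded lifting theorem. The gap is exactly at the step you flagged yourself: noetherianity of $\mathrm{gr}(R)$. Your remedy does not work. A skew polynomial ring $T_{k-1}[y_k;\sigma_k]$ exists as a ring only when $\sigma_k$ is a ring endomorphism (and the skew Hilbert Basis Theorem moreover needs an automorphism); an arbitrary ``set-theoretic choice'' of witnesses $s\mapsto s'$ from $x_ks\in Sx_k+S$ is in general neither additive, nor multiplicative, nor surjective, and it also need not be compatible with the prescribed values $\sigma_k(\bar x_i)=q_{ki}\bar x_i$, so there is no ``universal Ore extension'' to quotient. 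The final sentence of the remedy is a non sequitur in any case: producing some ring that surjects onto $T_k$ proves nothing unless that ring is itself noetherian, which is precisely what is missing --- quotienting ``by the true relations holding in $\mathrm{gr}(R)$'' merely reproduces $T_k$.

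What is needed is a statement that never mentions $\sigma$ at all: if $B$ is right noetherian and $T=B[y]$ with $yB=By$, then $T$ is right noetherian. This is proved by a Hilbert-basis-type argument on the exhaustive chain $T_m=B+By+\cdots+By^m$ (exhaustive because $yB=By$ lets one move coefficients to the left): for a right ideal $I$ of $T$, the sets $L_m=\{b\in B:\ by^m\in I+T_{m-1}\}$ are right ideals of $B$ (using $y^mB=By^m$ to rewrite $bcy^m=(by^m)c'$) and form an ascending chain, so they stabilize and finitely many elements of $I$ generate it. Iterating this over $\bar x_1,\dots,\bar x_n$ --- each $\bar x_k$ normalizes $T_{k-1}=S[\bar x_1,\dots,\bar x_{k-1}]$ because of the exact relations $\bar x_k S=S\bar x_k$ and $\bar x_k\bar x_i=q_{ki}\bar x_i\bar x_k$ in $\mathrm{gr}(R)$ --- gives that $\mathrm{gr}(R)$ is noetherian, and your lifting step then finishes the proof. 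Alternatively, do what the paper does: the proof of [M-R, Thm.\ 1.6.14] goes through verbatim once the units $q_{ji}$ are inserted, and that proof already contains the $\sigma$-free treatment of the normalizing generators.
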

\begin{proof} The proof is almost verbatim as in the interesting proof of Thm. 6.14  by McConnel and Robson in [M-R, p. 29] since our added units $q_{ji}$ of $S$ do not hurt their proof.
\end{proof}

In Theorem \ref{5.4} below, we note that the noetherian part (5.4)(i) of case 3 is a slight generalization of Proposition I.8.17 in  [B-G, p. 77] which is one of the key steps in proving that the quantum group  
$\mathcal{O}_q(G)$ is noetherian. One difference in our generalization is that we do not assume that each $x_i$ commutes with the elements of $S$. We also note that the noetherian part 5.4(i) of case 2 below 
is a generalization of [L-R, Cor. 2.4] since we do not assume any PBW $S$-basis for $R$.

\begin{thm}\label{5.4}
Let $R = S [ x_1, x_2, \dots , x_n ] $ be a finitely generated ring over a subring $S$ satisfying the following two conditions.
\begin{enumerate}[(1)]
\item for all $i$, $x_i S  + S = S + S x_i$.
\item for all $j > i$,� there exist units $q_{ji} \in S$ such that we have one of the following cases.
\begin{description}
\item[Case 1:] $(x_jx_i� - q_{ji}� x_i x_j)� \in   S[x_1, \dots, x_{j-1}] + S x_j $
\item[Case 1':] $ (x_jx_i� - q_{ji}� x_i x_j)� \in   Sx_i + S[x_{i+1}, \dots , x_n]$ 
\item[Case 2:] $(x_j x_i� - q_{ji}� x_i  x_j)�  \in  S + Sx_1+ \cdots + Sx_n$

\item[Case 3:]  $(x_j x_i� - q_{ji} x_i x_j) = f + g$  where $f  \in S + Sx_1+ \cdots + Sx_n$ and if $i >1 $, $g$ is a finite sum of quadratic monomials  $s x_a x_b $ where $s \in S$ and either  $a$ or $b$ is at most $i-1$; however if $i=1$,  $g$ is a finite sum of quadratic monomials $s x_a x_b$ where $s \in S$ and $a=1$ and $b<j$ or vice versa ($b=1$ and $a<j$). 
\end{description}
\end{enumerate}

Then in all cases, we have the following.
\begin{enumerate}[(i)]
\item If $S$ is noetherian, then so is $R$.
\item Suppose $S = A[z_1, \dots, z_m]$ is a finitely generated ring over a subring $A$ such that each $x_i$ commutes with the elements of $A$. Then, if $S$ is F-noetherian, so is $R$.
\item Under the additional assumption in (ii), if $S$ is directed F-noetherian, then so is $R$.
\end{enumerate}
\end{thm}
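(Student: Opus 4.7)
The plan is to establish part (i) case by case, then use F-noetherianness of $S$ to reduce (ii) to the noetherian case (i), and finally observe monotonicity of the construction for (iii).

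\textbf{Part (i).} Case 2 is exactly Lemma~\ref{5.3}, nothing further is needed. Cases 1 and 1$'$ reduce to Case 2: one option is an iterated single-variable argument, where I set $R_j := S[x_1,\dots,x_j]$ and show, using condition (1) and the case-1 form of the commutator, that $x_j R_{j-1} + R_{j-1} = R_{j-1} + R_{j-1}x_j$; another option is to filter $R$ by total degree in the $x_i$'s (with a lexicographic refinement matching the index order) so that the associated graded ring has only the homogeneous relation $\bar x_j\bar x_i = q_{ji}\bar x_i\bar x_j$ over $S$, to which Lemma~\ref{5.3} applies, and then lift by the standard filtered-to-graded fact. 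Case 3 follows the filtration device of [B-G, Prop.~I.8.17]: assign weights to the generators so that the quadratic tail $g$ sits in strictly lower filtration degree than the product $x_j x_i$, whence $\operatorname{gr}R$ satisfies the hypothesis of Case 2, Lemma~\ref{5.3} gives $\operatorname{gr}R$ noetherian, and noetherianness lifts to $R$.

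\textbf{Part (ii).} Let $B\subset R$ be a finite subset. I will build a noetherian subring $R^*\subset R$ containing $B$ and then invoke (i). Collect a finite set $D\subset S$ consisting of: \emph{(a)} all $S$-coefficients that appear when each $b\in B$ is put into a chosen ordered-monomial normal form; \emph{(b)} the $S$-coefficients of the right-hand sides of each relation $x_jx_i - q_{ji}x_ix_j$ in the applicable case, together with the units $q_{ji}$ and $q_{ji}^{-1}$; \emph{(c)} for each $i$ and each generator $z_k$ of $S$ over $A$, the elements $u_{ik},v_{ik},u'_{ik},v'_{ik}\in S$ provided by condition (1) via $x_iz_k = u_{ik}+v_{ik}x_i$ and $z_kx_i = u'_{ik}+v'_{ik}x_i$; \emph{(d)} the $A$-coefficients that appear when every element listed so far is expanded as a polynomial in $z_1,\dots,z_m$ over $A$; and \emph{(e)} $z_1,\dots,z_m$ themselves. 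Since $D$ is finite and $S$ is F-noetherian, $D$ is contained in some noetherian subring $S^*\subset S$. Put $R^* := S^*[x_1,\dots,x_n]$, the subring of $R$ generated by $S^*$ and the $x_i$; by construction $B\subset R^*$.

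To apply (i) to $R^*$ over $S^*$, I check the two hypotheses. Condition (2) is immediate from inclusion (b). For condition (1), the point is that $x_i$ commutes with $A\cap S^*$ (since $x_i$ commutes with $A$), that $x_i z_k = u_{ik}+v_{ik}x_i$ with both $u_{ik},v_{ik}\in S^*$, and that the relation extends multiplicatively and additively: if $x_is = s_1+s_2x_i$ and $x_it = t_1+t_2 x_i$ with $s_1,s_2,t_1,t_2\in S^*$, then
\[
x_i(st) = s_1 t + s_2 t_1 + s_2 t_2\, x_i \in S^* + S^*x_i.
\]
The symmetric statement handles $S^*x_i\subset x_iS^*+S^*$. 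Hence $R^*$ satisfies the hypotheses of (i) over $S^*$ and is noetherian; since $B\subset R^*$, $R$ is F-noetherian.

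\textbf{Part (iii).} The construction in (ii) is monotone in $B$: if $X\subset Y$ then $D(X)\subset D(Y)$, and since $S$ is directed F-noetherian we may choose $S^*(X)\subset S^*(Y)$, so that $R^*(X)\subset R^*(Y)$, witnessing that $R$ is directed F-noetherian.

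\textbf{Main obstacle.} The delicate step is the verification of condition (1) on $S^*$ in part (ii). It requires the coefficient-collection at step (d) so that the multiplicative extension of the relation stays inside $S^*$, and this is what genuinely uses the structural assumption $S=A[z_1,\dots,z_m]$ with $x_i$ centralizing $A$. A secondary bookkeeping issue is the weight choice in the filtration for Case 3 of part (i), which must simultaneously push the quadratic tail $g$ strictly below $x_jx_i$ while keeping the linear part $f$ comparable to the leading term.
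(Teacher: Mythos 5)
Your part (i) and part (iii) track the paper closely and are essentially fine: your observation that Case 2 is literally Lemma 5.3 is correct, your Case 3 sketch is the same weighted-filtration device the paper adapts from [B-G, Prop.\ I.8.17], and for Cases 1 and 1$'$ your first option (viewing $R$ as an iterated one-generator extension $R_j=R_{j-1}[x_j]$ and checking $x_jR_{j-1}+R_{j-1}=R_{j-1}+R_{j-1}x_j$) is a legitimate alternative to the paper's weighted filtration $d(x_i)=N^i$. One small caveat: $R$ is not assumed free over $S$, so there is no ``ordered-monomial normal form''; the paper simply fixes, once and for all, an arbitrary expression of each element of $R$ as a sum of words in $S\cup\{x_1,\dots,x_n\}$, and that is all that is needed.

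The genuine gap is in part (ii), at exactly the step you flag as delicate. You take $S^*$ to be an \emph{arbitrary} noetherian subring of $S$ containing the finite set $D$, and then claim $x_iS^*+S^*=S^*+S^*x_i$ because $x_i$ commutes with $A\cap S^*$, because $x_iz_k=u_{ik}+v_{ik}x_i$ with $u_{ik},v_{ik}\in S^*$, and because the normalization relation propagates through sums and products. But that propagation argument only yields $x_is\in S^*+S^*x_i$ for $s$ lying in the subring of $S^*$ generated by $(A\cap S^*)\cup\{z_1,\dots,z_m\}$ (i.e.\ for elements whose normalizing constants are already known to lie in $S^*$). A noetherian subring $S^*\supseteq D$ handed to you by F-noetherianness can contain many other elements $s\in S$, and for those there is no reason why $x_is$ should fall in $S^*+S^*x_i$; so condition (1) over $S^*$ is not verified, and the appeal to part (i) for $R^*=S^*[x_1,\dots,x_n]$ is unjustified. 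Nor can you repair this by shrinking $S^*$ to the subring generated by $D$, since subrings of noetherian rings need not be noetherian --- that is precisely the tension.

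The paper's proof is organized to avoid exactly this: the F-noetherian hypothesis is applied to the finite set of $A$-coefficients (your step (d)) to produce a noetherian subring $A^*$ whose elements commute with the $x_i$, and the new base is then \emph{defined} as $S^*:=A^*[z_1,\dots,z_m]$, so that every element of $S^*$ is, by construction, a sum of words in $A^*\cup\{z_1,\dots,z_m\}$; Lemma 5.5's induction on word length then establishes $x_iS^*+S^*=S^*+S^*x_i$ on \emph{all} of $S^*$. This is where the structural hypothesis $S=A[z_1,\dots,z_m]$ with each $x_i$ centralizing $A$ genuinely enters. Your construction collects the right constants but attaches them to the wrong subring, so the crucial normalization condition for the new base is left unproved.
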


\begin{proof}\leavevmode
\begin{enumerate}[(i)]
\item The proof in case 3 is a simple  modification of the interesting filtration method in [B-G, p. 77].
For convenience, we shall start with case 2.
\begin{description}
\item[Case 2:] Here we have  $x_j x_i� - q_{ji} x_i  x_j�  \in  S+ S x_1+ \cdots+ Sx_n $.
So we can take the standard filtration $A_d  := (S + S. x_1+\cdots+ S x_n)^d $ for $d \geq 1$, 
while $A_0 :=S$.  Let $y_i=x_i + A_0 \in A_1$.  Then,
since $S+ S x_1+ \cdots+ Sx_n \in A_1 $, it follows that gr$(R)= S [ y_1,y_2, \dots, y_n ] $ such that $y_j y_i� = q_{ji}� y_i  y_j$   for all $j>i$. Moreover,  $y_i S  = S�y_i$ for all $i$ since for all $i$, $x_i S  + S = S + S x_i$. 
Hence, since $S$ is noetherian, we can apply Lemma \ref{5.3} to obtain that $R$ is noetherian.

\item[Case 1:]  Here we have $x_j. x_i� - q_{ji}� x_i . x_j� = f_{ji} + s x_j \in  S [ x_1,\dots, x_{j-1} ] + Sx_j$ for some $s \in S$. So we choose the degree of every element $x_i$ by the formula  $d(x_i)= d_i= N^i$ where
$N$ is the maximum usual degree among all $f_{ji}$, and we choose $d(s)=0$ for all $s \in S$. 
Next we define a non-negative filtration on $R$ with $A_0 =S$ and for $d \geq 1$, $A_d$ is the set of all finite sums of products
$t_1 t_2 \dots t_r$  where each $t_i \in S \cup \{x_1,x_2, \dots, x_n\}$  such that  
\[d(t_1) + d(t_2) +\dots+d(i_r)  \leq  d. \]

Then $N. d_{j-1} \leq  d_i + d_j$ because it says $N. N^{j-1} \leq  N^i + N^j$.
Hence each $f_{ji}  \in   A _{d_i + d_j - 1}$.  Moreover,  $d_j \leq d_i + d_j$ . 
Consequently,  $f_{ji}+ sx_j  \in   A _{d_i + d_j - 1}$ for all $s \in S$. 
Again, as above, we end up with gr$(R)= S [ y_1, \dots, y_n ]$ such that $y_j y_i� = q_{ji}� y_i  y_j$   for all $j>i$, and 
$y_i S +S  = S�y_i +S$ for all $i$. Since $S$ is also noetherian, we can apply Lemma \ref{5.3} to obtain that $R$ is noetherian.

\item[Case 1':] Here we have $ x_j. x_i� - q_{ji}� x_i . x_j� =   s x_i + f_{ji}$ for some $s\in S$ and some 
$f_{ji} \in S [ x_{i+1}, \dots , x_n ]. $ 
In fact, case 1' is equivalent to case 1 by reversing the order of elements. That is,  using the permutation that sends $i$ to $n-i+1$ to get the ordering $\{x_n, x_{n-1}, \dots, x_1\}$. Or we can modify the proof for case 1 by choosing $d_i=  N^{n-i}$ where $N$ is the maximum of all usual degrees of $f_{ji}$.

\item[Case 3:] Our proof of for case 3 would be a simple modification of the proof of Prop. I.8.17 in [B-G, p. 77] where it is assumed versus our generalization that each $x_i$ commutes with the elements of $k$, and it is assumed that our $f=0$ and that 
$y_j y_i� - q_{ji} y_i  y_j=0 $ for $i=1$. Note that the authors in [B-G] used $i>j$ versus our notation $j>i$. Their trick was to assign a degree $d_i$ for each $x_i$ that leads to filtration such that 
 gr$(R)$, in their case, is generated as an $S$-algebra by the elements $y_1,y_2, \dots, y_n$ satisfying  
$y_j y_i� = y_i  y_j$ for all $j>i$. In fact their chosen degrees, which we shall keep, is $d(x_i)= d_i = 2^n-2^{n-i}$, and each $d(s)=0$ for all $s \in S$. In particular, their non-negative filtration starts with $A_0 =S$. 
However, we shall slightly modify the definition of $A_d$, for $d \geq 1$, to be all finite sums of words  $t_1 t_2 \dots t_r$  where each $t_i \in S \cup \{x_1,x_2, \dots, x_n\}$  such that
\[d(t_1) + d(t_2) +\dots+d(i_r)  \leq  d\]

This gives a non-negative filtration of $R$. The choice $d(x_i)  = 2^n-2^{n-i}$ makes our all the quadratic elements $s x_a x_b$ 
(appearing in $g$ in case $i>1$)(so $a$ or $b$ is at most $i-1$) have the property that $ d_a + d_b  \leq  d_i + d_j$ as shown in [B-G, p. 77]. Now we check our additional terms. In case $i=1$, the quadratic terms in $g$ have the property $d_1 + d_{j-1} \leq  d_1 + d_j$.  
Hence, for all $j>i$, $f_{ji}$ or $f \in  A _{d_i + d_j  - 1}$. 
Moreover, $S + Sx_1 + \cdots +Sx_n  \subset  A _{d_i + d_j  - 1}$ for all $j>i$   because One can easily show that 
$d(x_n) = d_n = 2^n - 1 < d(x_1) + d(x_2) = (2^n -2^{n-1}) + (2^n -2^{n-2})- 1$ because upon simplification, we need $3 ( 2^{2n-3}) -2^n +1 >0$ 
for all $ n \geq 2$ which is true since $\frac{3}{8} (x^2) - x +1 > 0$ for all $x \geq 2$.  
Hence gr$(R) = [ y_1,y_2, \dots, y_n ] $ such that  $y_j y_i� = q_{ji}�y_i  y_j$ for all $j>i$ and 
$ y_i S +S = S+ S� y_i $. Since $S$ is noetherian, we can apply Lemma \ref{5.3} to obtain that $R$ is noetherian.
\end{description}

\item Since each $x_i$ are not assumed to commute with the elements of $S$, then every element of the finitely generated ring $R= S[x_1, x_2, \dots , x_n]$ can be written as a finite sum of words $t_1, t_2, \dots, t_k$ where each $t_i \in S \cup \{x_1, x_2, \dots, x_n\}$. 
Such representation is not necessarily unique.  But we use the \emph{axiom of choice} 
to make a fixed choice of representations for all elements of $R$.  
Let $F=\{f_1,�\dots , f_m\}$ be any finite subset of $R$.   For all $i$, let $C(f_i)$ be the elements of $S$ appearing in the representation of $f_i$.  Similarly, for all $j>i$, let $C_{ji}$  be the elements of $S$ appearing in the representation of $f_{ji}$ depending on our case. Recall
\[x_j x_i - q_{ji} x_i  x_j  = f_{ji}\]
Let  $F'$  be the union of the following finite sets: $C(f_i)$ with all $C_{ji}$ with all $q_{ji}$. 
We need to add more constants from $S$ to $F''$ arising from the given condition: for all $i$, $x_i S  + S = S + S x_i$. Thus each $x_i  z_j  \in  Sx_i +S$ and each $ z_j x_i  \in  Sx_i +S$.  Hence  

\[x_i z_j  = b_{ij} x_i  + c_{ij}  \text{ and }    z_j x_i  = x_i d_{ij} + e_{ij}\]          

where all $b_{ij}, c_{ij}, d_{ij}, e_{ij}   \in S$.  

Now we enlarge the finite set $F'$ by adding all $b_{ij}, c_{ij}, d_{ij}, e_{ij}$, and let $X(F)$ be the resulting set. Note that $X(F)$ is a \emph{finite} subset of $S$ because we have finitely many $z_i$ in

$S =A [z_1, z_2,  \dots , z_m]$.  For (iii) in particular, we shall need the axiom of choice again to write all elements of $A[z_1, \dots, z_m]$ as words like $a_1 z_2 z_3 a_2 z_1 a_3$. With this in mind, let $X(F(A))$ be the set of all coefficients in $A$ appearing in the representations of all $x \in X(F)$ as words in  $A[z_1,\dots, z_m]$.  Since $S$ is F-noetherian, $X(F)(A)$ is \textit {contained} in a noetherian subring 
$A^*$ of  $S$.   Finally, let \[S^*= A^*[z_1,\dots, z_m]   \text {and let }    R^*= S^*[x_1, \dots  ,x_n]\]

To complete the proof, we shall need Lemma 5.5 proven below which shows that condition (1) is still satisfied. That is, for all $i$,  
$x_i S^*� + S^* = S^* + S^*�x_i$.
We also have condition (2) is satisfied.  Namely, for all $j > i$,  there exist units $q_{ji} \in S^*$ such that  all ``coefficients" of $S$ appearing in all the equations $x_j x_i  - q_{ji} x_i  x_j  = f_{ji}$ 
are in $S^*$ (where $f_{ji}$ depends on our four cases).  
Hence $R^*= S^* [x_1, x_2, \dots , x_n]$ is noetherian by part (i) and thereby proving (ii).�  
\item Now we are assuming that $S$ is directed F-noetherian.  
Our proof in (ii) with the axiom of choice (used twice), shows that for  finite subsets 
$F_1 \subset F_2$ in $R=S[x_1, x_2, \dots , x_n]$, we have constructed the corresponding finite sets in $S$ and then in $A$ (via where $S= A[z_1, \dots, z_m]$) such $X(F_1)(A)  \subset  X(F_2)(A) $.
These two finite sets are contained respectively in noetherian subrings $S_1^* \subset S_2^*$ since 
$S$ is directed F-noetherian. Thus  $S_1^*[x_1, x_2, \dots , x_n]  \subset  S_1^*[x_1, x_2, \dots , x_n]  $.  Hence $R$ is directed 
F-noetherian. This proves Theorem \ref{5.4} modulo the proof of Lemma 5.5 shown next.
\end{enumerate}
\end{proof}

\begin{lem}\label{lem5.5} 
Let  $R= S [ x_1, x_2, \dots, x_n ] $ be a finitely generated ring over its subring $S$ and suppose 
$S=  A [ z_1, \dots, z_m ] $ where each $x_i$ commutes with the elements of $A$, and for all $i$, 
$$�x_i S  + S = S + S x_i $$. 
In particular, we have  $x_i. z_j  = b_{ij}. x_i  + c_{ij}   \text { and }  z_j. x_i  = x_i. d_{ij}+ e_{ij}$ for some  $b_{ij}, c_{ij}, d_{ij}, e_{ij}  \in S$.  Let  $A^*$ be a subring of $A$ and let $S^*=  A^*  [ z_1, \dots , z_m ] $.  
If $S^*$ contains all $b_{ij}, c_{ij}, d_{ij}, e_{ij}$ defined above, then for all $i$, we have
\[x_i S^*� + S^* = S^* + S^* �x_i\]
\end{lem}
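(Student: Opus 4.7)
The plan is to prove both inclusions $x_i S^* \subseteq S^* + S^* x_i$ and $S^* x_i \subseteq x_i S^* + S^*$ by a word-length induction, exploiting the two facts that $x_i$ already commutes with every element of $A \supseteq A^*$ and that the coefficients $b_{ij}, c_{ij}, d_{ij}, e_{ij}$ produced by moving $x_i$ past a $z_j$ lie in $S^*$ by hypothesis.

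First I would reduce to words. Every element $s^* \in S^* = A^*[z_1,\dots,z_m]$ is a finite sum of monomial words $w = t_1 t_2 \cdots t_k$ with each $t_r \in A^* \cup \{z_1,\dots,z_m\}$, so by additivity it suffices to prove $x_i w \in S^* + S^* x_i$ for each such word (and similarly $w x_i \in x_i S^* + S^*$). I would induct on $k$. The base case $k=0$ reads $x_i = 0 + 1 \cdot x_i \in S^* + S^* x_i$, and dually $x_i = x_i \cdot 1 + 0 \in x_i S^* + S^*$.

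For the inductive step, assume $x_i(t_2 \cdots t_k) = u + v x_i$ with $u, v \in S^*$. If $t_1 = a \in A^*$, then since $x_i a = a x_i$ (because $A \supseteq A^*$ centralizes $x_i$), I get
\[
x_i t_1 t_2 \cdots t_k = a(u + v x_i) = au + (av) x_i \in S^* + S^* x_i,
\]
using that $S^*$ is closed under left multiplication by $a \in A^* \subseteq S^*$. If instead $t_1 = z_j$, then the given identity $x_i z_j = b_{ij} x_i + c_{ij}$ with $b_{ij}, c_{ij} \in S^*$ gives
\[
x_i z_j t_2 \cdots t_k = b_{ij}(u + v x_i) + c_{ij}(t_2 \cdots t_k) = \bigl(b_{ij} u + c_{ij} t_2 \cdots t_k\bigr) + (b_{ij} v) x_i,
\]
and this lies in $S^* + S^* x_i$ because $t_2 \cdots t_k \in S^*$ and $S^*$ is closed under multiplication. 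The inclusion $S^* x_i \subseteq x_i S^* + S^*$ is proved by the completely symmetric induction using $z_j x_i = x_i d_{ij} + e_{ij}$ with $d_{ij}, e_{ij} \in S^*$.

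There is no real obstacle here; the only point that requires care is verifying that each time a factor $x_i$ is commuted past a $z_j$, the newly produced left-coefficients $b_{ij}, c_{ij}$ (or $d_{ij}, e_{ij}$) remain in $S^*$, so that the inductive hypothesis can be reapplied. This is exactly the content of the hypothesis ``$S^*$ contains all $b_{ij}, c_{ij}, d_{ij}, e_{ij}$,'' together with $S^*$ being a subring. Combining the two inclusions yields $x_i S^* + S^* = S^* + S^* x_i$, as required.
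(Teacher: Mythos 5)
Your proof is correct and follows essentially the same route as the paper: reduce to words in $A^* \cup \{z_1,\dots,z_m\}$, induct on word length, and push $x_i$ through one letter at a time using its commutation with $A^*$ and the relations $x_i z_j = b_{ij} x_i + c_{ij}$, $z_j x_i = x_i d_{ij} + e_{ij}$ with coefficients in $S^*$. Your single left-peeling step is a slightly cleaner organization than the paper's four word-forms, but the argument is the same.
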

\begin{proof}
The elements of $S^*= A^*  [z_1, \dots, z_m]$  (which is a finitely generated ring over $A^*$) can be written as a finite sum of words  $y_1, y_2, \dots, y_k$  (say of length $k$) where each $y_i  \in  A^* \cup  \{ z_1, z_2, \dots , z_m\}$.  To check  each $x_i S^*� + S^* = S^* + S^* x_i$, we shall first prove the left-hand side, namely that each $x_i S^*� \subset S^* +S^*�x_i$  by induction on the length of words  in $S^*$ (while the right-hand side follows similarly).  Recall that 
\[x_i z_j  = b_{ij}. x_i  + c_{ij}   \text { and }  z_j. x_i  = x_i. d_{ij}+ e_{ij}\]
where  $b_{ij}, c_{ij}, d_{ij}, e_{ij}  \in S^*$.

By induction,  suppose the words in $S^*$ of length $k$ satisfy the desired (left) property that 
each $x_i S^*� \subset S* + S*�x_i$ . So let us take a word $w$ of length $k+1$.
Since $S^*=  A^* [z_1, \dots, z_m]$,  the word $w$ will have one of the following 4 forms. 
\[ a^*s^*, s^*a^*, z_js^*, s^*z_j  \]
where $ a^* \in A^* $ and $\text {length}(s^*)= k $

Recall that each $x_i$ commutes with elements of $A$, $A^* \subset A$, 
and, \textit {by induction}, each  $x_i s^*� \subset S^* + S^*�x_i$.

For form 1, each $x_i (a^*s^*)  = a^*x_i s^* \in a^*(S^*x_i +S^*)  \subset  S^*x_i +S^*$.
For form 2,  each $x_i (s^*a^*) =  (x_i s^*)a^* \in (S^*x_i +S^*)a^* = S^*a^*x_i + S^*a^* 
\subset S^*x_i +S^*$.  For form 3, each $x_i (s^*z_j)  \in   (S^*x_i +S^*)z_j = S^*(b_{ij}. x_i  + c_{ij}) + S^*z_j
 \subset S^*x_i +S^* $ because $S^*$ contains each $b_{ij}$ and $c_{ij}$.  For form 4, each $x_i (z_j s^*) = x_i. z_j  = (b_{ij} x_i  + c_{ij}) s^* \in b{ij} (S^*x_i +S^*) + c_{ij}) s^* \subset   S^*x_i +S^*$ because $S^*$ contains each $b_{ij}$ and $c_{ij}$.
 
Similarly, we can show that each  $ s^* x_i� \subset S^* + x_i S^*$  by using the equations
$z_j x_i  = x_i d_{ij}+ e_{ij}$  and $S^*$ contains each $d_{ij}$ and  $e_{ij}$ 
This proves Lemma \ref{lem5.5}.    
\end{proof}
\par  We remind the reader that the proof of Lemma 5.5 completes the proof of Theorem 5.4.

\begin{corr} \label{cases}
Let $R = S[x_1, x_2, \dots, x_n]$ be a finitely generated ring over a subring $S$ 
where each $x_i$ commutes with the elements of $S$, and 
for all $j > i$,�there exist units $q_{ji}� \in S$ such that we have one of the following cases.
\begin{description}
\item[Case 1:]  $(x_j x_i� - q_{ji}�x_i x_j)� \in   S[x_1, \dots, x_{j-1}] + S.x_j $

\item[Case 1':]  $ (x_jx_i� - q_{ji}� x_i x_j)� \in   Sx_i + S[x_{i+1}, \dots , x_n]$ 

\item[Case 2: ]   $(x_j x_i� - q_{ji}�x_i  x_j) � \in  S + Sx_1+ \cdots + Sx_n$

\item[Case 3:] 
$(x_j x_i� - q_{ji} x_i x_j) = f + g$  where $f  \in S + Sx_1+ \cdots + Sx_n$ and if $i >1 $, $g$ is a finite sum of quadratic monomials  $s x_a x_b $ where $s \in S$ and either  $a$ or $b$ is at most $i-1$; however if $i=1$,  $g$ is a finite sum of quadratic monomials $s x_a x_b$ where $s \in S$ and $a=1$ and $b<j$ or vice versa ($b=1$ and $a<j$). 
\end{description}

Then in all such cases, we have the following.
\begin{enumerate}[(i)]
\item If $S$ is noetherian, then so is $R$.
\item If $S$ is F-noetherian, then so is $R$.  
\item If $S$ is directed F-noetherian, then so is $R$.
\end{enumerate}
\end{corr}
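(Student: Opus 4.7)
The plan is to deduce Corollary \ref{cases} directly from Theorem \ref{5.4}, since the former is essentially the special case of the latter in which every $x_i$ commutes with all of $S$ rather than merely normalizing $S$ modulo $S$. First, observe that the hypothesis ``each $x_i$ commutes with the elements of $S$'' gives $x_i S = S x_i$, which immediately implies the weaker normalizing condition $x_i S + S = S + S x_i$ required as hypothesis (1) of Theorem \ref{5.4}. The four cases of relations among $x_j x_i - q_{ji} x_i x_j$ listed here are verbatim those of hypothesis (2) of Theorem \ref{5.4}. Thus part (i) of the corollary follows immediately from Theorem \ref{5.4}(i), with no further work.

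For parts (ii) and (iii) I would apply Theorem \ref{5.4}(ii) and (iii), which require in addition that $S$ be expressible as $S = A[z_1, \dots, z_m]$ for some subring $A$ such that every $x_i$ commutes with $A$. The natural choice here is $A := S$ itself with no additional generators (that is, $m = 0$), so that $S = A$ trivially is finitely generated as a ring over $A$. The commutation requirement ``each $x_i$ commutes with the elements of $A$'' is then automatic from the hypothesis of the corollary. Invoking Theorem \ref{5.4}(ii) and (iii) then yields the F-noetherian and directed F-noetherian conclusions respectively.

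The only delicate point is whether the degenerate case $A = S$, $m = 0$ is actually permitted in the setup of Theorem \ref{5.4}; this is purely a matter of convention. Inspecting the proof of Theorem \ref{5.4}(ii), the role of the $z_j$'s was only to describe words in $S = A[z_1, \dots, z_m]$ via an auxiliary application of the axiom of choice. When $m = 0$, every element of $S$ is already an element of $A$ represented by itself, so Lemma \ref{lem5.5} is vacuous (take $S^* = A^*$), and the construction of the noetherian subring $R^* = S^*[x_1, \dots, x_n]$ containing any given finite subset of $R$ proceeds without any modification. Hence no further argument is needed beyond a direct appeal to Theorem \ref{5.4}, and I do not anticipate a genuine obstacle.
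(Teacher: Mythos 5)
Your proposal is correct and matches the paper's intent: the corollary is stated as an immediate specialization of Theorem \ref{5.4}, since commutation of each $x_i$ with $S$ gives condition (1) trivially and one takes $A = S$ (finitely generated over itself) so that the extra hypothesis in parts (ii) and (iii) is automatic. Your check that the degenerate choice $A = S$ causes no problem in the proof of Theorem \ref{5.4}(ii) and makes Lemma \ref{lem5.5} vacuous is exactly the right observation, and nothing further is needed.
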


\begin{corr} Let $R= S[x_1; f_1, d_1][x_2; f_2, d_2] \dots  [x_n; f_n, d_n]$
 be an iterated Ore extension over a ring $S$ such that
\begin{enumerate}[(i)]
\item for all $j>i$, $f_j(x_i) =�q_{ji} (x_i)$ for some unit $q_{ji}\in S$, and
\item for all $i$, $x_i S  + S = S + S x_i$ and $S = A[z_1, \dots, z_m]$ (is a finitely generated ring over a subring $A$) such that each $x_i$ commutes with the elements of $A$.
\end{enumerate}
Then, if $S$ is F-noetherian, then so is $R$. 
\end{corr}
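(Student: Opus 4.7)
The plan is to derive this corollary directly from Theorem~\ref{5.4} by checking that the iterated Ore extension $R$ falls into Case~1 of that theorem and then applying part~(ii). The only thing to verify is that condition~(2) of Theorem~\ref{5.4}, namely
\[ x_j x_i - q_{ji}\, x_i x_j \in S[x_1,\dots,x_{j-1}] + S x_j \qquad (j>i), \]
holds in our iterated Ore extension.

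First I would unpack the definition of an iterated Ore extension. Writing $R_{j-1} = S[x_1;f_1,d_1]\cdots[x_{j-1};f_{j-1},d_{j-1}]$, the relation defining $R_j = R_{j-1}[x_j;f_j,d_j]$ gives, for any $r\in R_{j-1}$,
\[ x_j r = f_j(r)\, x_j + d_j(r). \]
Applying this to $r = x_i$ with $i<j$ and using hypothesis~(i) that $f_j(x_i) = q_{ji}\, x_i$, I get
\[ x_j x_i - q_{ji}\, x_i x_j = d_j(x_i) \in R_{j-1} = S[x_1,\dots,x_{j-1}]. \]
In particular, $x_j x_i - q_{ji}\, x_i x_j$ lies in $S[x_1,\dots,x_{j-1}] + S x_j$, so Case~1 of Theorem~\ref{5.4} is satisfied (with no restriction on what $d_j(x_i)$ actually is).

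Condition~(1) of Theorem~\ref{5.4}, namely $x_i S + S = S + S x_i$, is exactly the first part of our hypothesis~(ii). The additional assumption needed for Theorem~\ref{5.4}(ii) is that $S = A[z_1,\dots,z_m]$ is finitely generated as a ring over a subring $A$ such that each $x_i$ commutes with the elements of $A$; this is precisely the second part of hypothesis~(ii). Thus all the hypotheses of Theorem~\ref{5.4}(ii) in Case~1 are met, and the conclusion ``if $S$ is F-noetherian, then so is $R$'' follows immediately.

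I do not expect any real obstacle: the entire argument is bookkeeping to recognize the iterated Ore extension as an instance of the finitely generated extension considered in Theorem~\ref{5.4}, the key observation being that the commutator relation in an iterated Ore extension automatically lies in $S[x_1,\dots,x_{j-1}]$ by construction, which is contained in the set allowed by Case~1.
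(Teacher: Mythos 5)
Your proposal is correct and matches the paper's own argument: the paper likewise observes that $x_j x_i - q_{ji}\,x_i x_j = d_j(x_i)$ lies in $S[x_1,\dots,x_{j-1}]$, so Case~1 of Theorem~\ref{5.4} applies, with hypothesis~(ii) supplying condition~(1) and the subring $A$ needed for part~(ii) of that theorem. Your write-up simply spells out the bookkeeping that the paper leaves terse.
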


\begin{proof}\leavevmode  Here we have  $(x_jx_i� - q_{ji}� x_i x_j) � \in   k[x_1, \dots, x_{j-1}]  $.
So we can apply Theorem 5.4 case 1 to obtain our result..
\end{proof}

\begin{thm}\label{5.8} Let $G= k[t_1,  \dots , t_m] [ x_1,  \dots , x_n ]= k[t_1,  \dots , t_m,  x_1,  \dots , x_n] $ be a finitely generated ring over a subring $k$ such that 
\begin{enumerate}[(1)]
\item For all $j > i$,�there exist units $p_{ji} \in k$  such that   
\[t_j t_i� - p_{ji}� t_i t_j� \in   k[t_1, \dots, t_{j-1}] + k t_j\]
\item For all $j > i$,�there exist units $q_{ji} \in k$  such that 
\begin{description}
\item[Case 1: ]   $(x_jx_i� - q_{ji}� x_i x_j)� \in   k[x_1, \dots, x_{j-1}] + kx_j $
\item[Case 2: ] $(x_j x_i� - q_{ji}� x_i  x_j)�  \in  k + kx_1+ \cdots + kx_n$
\item[Case 3: ]  $(x_j x_i� - q_{ji} x_i x_j) = f + g$  where $f  \in S + Sx_1+ \cdots + Sx_n$ and if $i >1 $, $g$ is a finite sum of quadratic monomials  $s x_a x_b $ where $s \in S$ and either  $a$ or $b$ is at most $i-1$; however if $i=1$,  $g$ is a finite sum of quadratic monomials $s x_a x_b$ where $s \in S$ and $a=1$ and $b<j$ or vice versa ($b=1$ and $a<j$). 

\end{description}

\item For all $j$ and $i$, there exist units $c_{ji}�\in k$ such that 
\[t_ix_j� - c_{ji}�x_j  t_i  \in  k + kt_1+ \cdots + k t_u+ kx_1+ \cdots + kx_v\]
where in case 1, $(u, v)=(i, j-1)$ or $(i-1, j)$;  in case 2, $(u, v)=(i, n)$; and in case 3, $(u, v)=(i-1, j)$.

\item for all $i$, $ k t_i +k = k + t_i k$,
\item for all $j$, $k x_j +k = k + x_j k$, and 
\item  $S = A[z_1, \dots, z_m]$ is a finitely generated ring over a subring $A$ such that each $x_i$ and each $t_j$ commutes with the elements of $A$.
\end{enumerate}

Then in all cases, we have the following.
\begin{enumerate}[(i)]
\item If $k$ is noetherian, then so is $G$.
\item If $k$ is F-noetherian, then so is $G$.  
\item If $k$ is directed F-noetherian, then so is $G$.  
\end{enumerate}
\end{thm}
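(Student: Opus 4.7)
Introduce the combined list $y_1,\dots,y_{m+n}$ with $y_l := t_l$ for $l\le m$ and $y_l := x_{l-m}$ for $m<l\le m+n$. The plan is to prove part (i) first and then reduce (ii) and (iii) to it along the lines of Theorem~\ref{5.4} (ii)--(iii). When condition (ii) of the theorem is \textbf{Case 1}, every relation stated in (i), (ii), (iii), translated into the combined ordering, fits Case 1 of Theorem~\ref{5.4}: the $t$-$t$ relations already lie in $k[y_1,\dots,y_{j'-1}] + k y_{j'}$ by hypothesis (i); the $x$-$x$ relations of (ii) Case 1 lie in $k[y_{m+1},\dots,y_{j'-1}] + k y_{j'}$; and each mixed relation from (iii), after rewriting $t_i x_j - c_{ji} x_j t_i \in \cdots$ as $y_{j'} y_{i'} - c_{ji}^{-1} y_{i'} y_{j'} \in \cdots$ (with $y_{j'}$ the $x$-variable and $y_{i'}$ the $t$-variable), lands in $k[y_1,\dots,y_{j'-1}] + k y_{j'}$ because in either subcase of (iii) Case 1 every index on the RHS is at most $j'-1$ and the possible $k x_j$ summand is absorbed into $k y_{j'}$. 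Conditions (4) and (5) supply $y_l k + k = k + k y_l$ and (6) supplies the centralizing subring $A$, so Theorem~\ref{5.4} Case 1 applies verbatim to $G=k[y_1,\dots,y_{m+n}]$ and yields all three conclusions at once.

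For \textbf{Cases 2 and 3} of condition (ii) the polynomial growth of the $t$-relations in (i) prevents a direct fit into Case 2 or Case 3 of Theorem~\ref{5.4}, so I employ a filtration argument that handles the three heterogeneous families of relations simultaneously. Set $d(s)=0$ for $s \in k$, $d(t_l) = D^l$ where $D$ is strictly larger than both the maximal total degree of any polynomial occurring on the RHS of (i) and $2^n$, and $d(x_l) = 2^n - 2^{n-l}$ (the choice from the proof of Theorem~\ref{5.4} Case 3, which also suffices for Case 2 since there the RHS is linear). A routine verification, mirroring the estimates in the proofs of Theorem~\ref{5.4} Cases 1 and 3, shows that each RHS in (i), (ii), (iii) has degree at most $d(y_{i'}) + d(y_{j'}) - 1$: the $t$-polynomials of (i) by $N_t \cdot D^{j-1} \le D^j - 1 \le D^i + D^j - 1$; the quadratic summands $g$ in (ii) Case 3 by the combinatorics of $d(x_l) = 2^n - 2^{n-l}$ already worked out in the proof of Theorem~\ref{5.4} Case 3; and the mixed linear stuff in (iii) because $u \le i$ forces $D^u \le D^i$ while $D > 2^n$ gives $d(x_v) \le 2^n - 1 < D \le D^i$. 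Consequently $\operatorname{gr}(G)$ is the quantum affine $k$-algebra $k[\bar y_1,\dots,\bar y_{m+n}]$ with $\bar y_{j'} \bar y_{i'} = q'_{j'i'} \bar y_{i'} \bar y_{j'}$ for all $j' > i'$ (units being $p_{ji}$, $q_{ji}$, or $c_{ji}^{-1}$ as appropriate) and $\bar y_l k = k \bar y_l$ inherited from (4)--(5). Lemma~\ref{5.3} applies to give $\operatorname{gr}(G)$ noetherian whenever $k$ is, and the standard filtration argument for non-negative exhaustive filtrations yields $G$ noetherian, completing part (i).

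Parts (ii) and (iii) are now obtained by imitating the proof of Theorem~\ref{5.4} (ii)--(iii). Given a finite subset $B \subset G$, use the axiom of choice to fix word representations of the elements of $B$ and collect the finite set $X(B) \subset k$ consisting of all $k$-coefficients appearing in those representations, in the chosen representations of the RHS's of (i), (ii), (iii) for the pairs of variables that show up, in the units $p_{ji}, q_{ji}, c_{ji}$ and their inverses, and in the commutation identities coming from (4) and (5). Using (6), rewrite the elements of $X(B)$ as words in $A \cup \{z_1, \dots, z_r\}$ and collect the finite set of $A$-coefficients; this set sits in a noetherian subring $A^* \subseteq A$ (chosen functorially in $B$ under the directed-F-noetherian hypothesis on $k$). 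Set $k^* := A^*[z_1, \dots, z_r]$. A direct extension of Lemma~\ref{lem5.5} to both families $\{t_l\}$ and $\{x_l\}$ -- the same induction on word length, now invoking (4) and (5) in place of the single condition (1) of Lemma~\ref{lem5.5} -- yields $y_l k^* + k^* = k^* + k^* y_l$ for every $l$. Hence $G^* := k^*[t_1, \dots, t_m, x_1, \dots, x_n]$ satisfies the hypotheses of Theorem~\ref{5.8} with base ring $k^*$; by part (i) $G^*$ is noetherian, and since $G^* \supseteq B$ the ring $G$ is F-noetherian, the functorial choice handling (iii).

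The main obstacle is the filtration bookkeeping in Cases 2 and 3 of part (i): the three heterogeneous families of relations (polynomial in the $t$'s from (i), linear or restricted-quadratic in the $x$'s from (ii), and mixed linear from (iii)) must all be forced into strictly lower filtration degree by one common choice of weights, and this is precisely what $d(t_l) = D^l$ with $D > \max(N_t, 2^n)$ together with $d(x_l) = 2^n - 2^{n-l}$ accomplishes uniformly.
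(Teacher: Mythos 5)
Your proposal is correct, and it reaches the conclusion by a route that differs from the paper's in instructive ways. For Case 1 you merge the two families into one ordered list $y_1,\dots,y_{m+n}$ ($t$'s first) and observe that every relation --- $t$-$t$, $x$-$x$, and the mixed ones after rewriting with the unit $c_{ji}^{-1}$ --- lands in $k[y_1,\dots,y_{j'-1}]+ky_{j'}$, so Theorem \ref{5.4} Case 1 applies verbatim and delivers (i), (ii), (iii) simultaneously; the paper instead re-runs the filtration machinery from scratch with two geometric weight families $\deg(t_i)=N^i$, $\deg(x_i)=M^i$. Your reduction is arguably cleaner for this case, while the paper's treatment keeps a single uniform template across all three cases. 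For Cases 2 and 3 you and the paper use essentially the same method (a non-negative filtration making $\operatorname{gr}(G)$ a quantum affine extension of $k$, then Lemma \ref{5.3} plus conditions (4)--(5)), but with different bookkeeping: the paper takes $\deg(t_i)=N^i$ with case-specific $x$-weights ($1$ in Case 2, $2^n-2^{n-i}$ in Case 3), whereas you take one uniform choice $\deg(t_l)=D^l$ with $D>\max(N_t,2^n)$ and the Brown--Goodearl weights $2^n-2^{n-l}$ on the $x$'s in both cases; your estimates ($N_t D^{j-1}\le D^j-1$, $d(x_v)\le 2^n-1<D\le D^i$, and the already-verified quadratic combinatorics) do check out, so either weight scheme works. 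For parts (ii) and (iii) the paper only says the argument is ``very similar to Theorem \ref{5.4}(ii)--(iii)''; you spell it out, including the needed extension of Lemma \ref{lem5.5} to both families via (4) and (5), which matches the intended argument. One small point to tighten: when assembling $X(B)$ you should collect the $k$-coefficients of the right-hand sides of the relations in (1)--(3) for \emph{all} pairs of generators (there are only finitely many in total), not merely ``the pairs that show up'' in the representations of $B$, since $G^*=k^*[t_1,\dots,t_m,x_1,\dots,x_n]$ contains every generator and must satisfy the full hypothesis list over $k^*$ before part (i) can be invoked; this is a trivial adjustment and is how the corresponding step is done in the proof of Theorem \ref{5.4}(ii).
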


\begin{proof}\leavevmode
\begin{enumerate}[(i)]
\item The proof is very similar to the proof of Theorem \ref{5.4} (i) with the following modifications.
\begin{description}
\item [Case 1:] We choose deg$(t_i)= N^i$ where $N$ is the maximum usual degree of all polynomials $f_{ji}$ in the relations $ t_j t_i� - p_{ji}� t_i t_j� = f_{ji}  \in   k[t_1, \dots, t_{j-1}] + k.t_j $.
We also choose deg$(x_i)= M^i$ where $M$ is the maximum usual degree of all polynomials $h_{ji}$ in the relations $x_jx_i� - q_{ji}� x_i x_j =h_{ji} � \in   k[x_1, \dots, x_{j-1}] + k.x_j $. Then the strategy of the proof in Theorem \ref{5.4} (i) works for the terms  containing  $ t_j t_i� - p_{ji}� t_i t_j$ and the terms containing $x_jx_i� - q_{ji}x_i x_j$.  So we still need to check the relations containing $ t_i x_j� - c_{ji}�x_j  t_i  $.   For this, assuming $(u, v) = (i, j-1)$, all we need is $ N^u + M^v= N^i+ M^{j-1} < N^i + M^j $ which is true.  Similarly, if $(u, v) = (i-1, j)$,
\item[Case 2: ] We go as in case 1 except that we choose deg$(x_i)=1$. Then the strategy of the proof in Theorem \ref{5.4} (i) works for the terms containing  $ t_j t_i� - p_{ji}� t_i t_j$  and the terms containing  $x_j x_i� - q_{ji}x_i x_j$.  So we still need to check the relations containing $t_i x_j� - c_{ji}�x_j  t_i  $. For this, all we need is 
$ N^u + 1= N^{i-1}+ 1 < N^i + 1$  which is true.

\item[Case 3: ] We go as in case 1 except that we choose deg$(x_i)=2^n - 2^{n-i}$ as done in the proof of Theorem \ref{5.4} (i). Then the strategy of the proof in that theorem works for the terms containing $ t_j t_i� - p_{ji}� t_i t_j$ and the terms containing  $x_j x_i� - q_{ji}x_i x_j$.  So we still need to check the relations containing  $ t_i x_j� - c_{ji}�x_j  t_i  $. For this, all we need is 
$ N^u + 2^n - 2^{n-v} = N^{i-1}+ 2^n - 2^{n-j}  <  N^i + 2^n - 2^{n-j} $  which is true.
\end{description}
\end{enumerate}

The proofs of part (ii) and (iii) are very similar to the proof of Theorem \ref{5.4} (ii) and (iii) and will be left to the reader.
This proves Theorem \ref{5.8}.
\end{proof}

\begin{thm}\label{5.9} Let $G= k[t_1,  \dots , t_m] [ x_1,  \dots , x_n ]= k[t_1,  \dots , t_m,  x_1,  \dots , x_n] $ be a finitely generated ring over a subring $k$ such that 
\begin{enumerate}[(1)]
\item For all $j > i$,�there exist units $p_{ji} \in k$  such that   
\[t_j t_i� - p_{ji}� t_i t_j� \in   k + k t_1 +  \cdots + kt_m\]
\item For all $j > i$,�there exist units $q_{ji} \in k$  such that 
\begin{description}
\item[Case 1: ]   $(x_jx_i� - q_{ji}� x_i x_j)� \in   k[x_1, \dots, x_{j-1}] + k.x_j $
\item[Case 2: ] $(x_j x_i� - q_{ji}� x_i  x_j)�  \in  k + kx_1+ \cdots + kx_n$
\end{description}

\item For all $j$ and $i$, there exist units $c_{ji}�\in k$ such that 
\[t_ix_j� - c_{ji}�x_j  t_i  \in  k + kt_1+ \cdots + k t_u+ kx_1+ \cdots + kx_v\]
where in case 1, $(u, v)=(i, j-1)$ or $(i-1, j)$; while in case 2, $(u, v)=(i, n)$.

\item for all $i$, $ k t_i +k = k + t_i k$,
\item for all $j$, $k x_j +k = k + x_j k$, and 
\item suppose $S = A[z_1, \dots, z_m]$ is a finitely generated ring over a subring $A$,  such that each $x_i$ and ecah $t_j$ commutes with the elements of $A$.  
\end{enumerate}

Then in all cases, we have the following.
\begin{enumerate}[(i)]
\item If $k$ is noetherian, then so is $G$.
\item If $k$ is F-noetherian, then so is $G$.  
\item If $k$ is directed F-noetherian, then so is $G$.  

\end{enumerate}  
\end{thm}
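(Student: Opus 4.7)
My plan is to adapt the filtration argument of Theorem \ref{5.8} to the present situation, where the only substantive change is that the relations among the $t_i$ now have the form of case 2 rather than case 1. For part (i), I would define a non-negative filtration on $G$ so that in the associated graded ring $\mathrm{gr}(G)$ every defining relation collapses to a pure quantum relation, after which Lemma \ref{5.3} applies to give noetherianity of $\mathrm{gr}(G)$, and hence of $G$.

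The degrees I would pick are $\deg(t_i) = 1$ for every $i$, reflecting the case 2 nature of the $t$-relations. In Case 1 for the $x$'s I would set $\deg(x_i) = M^i$ where $M$ bounds the usual degree of the polynomial tails coming from $x_jx_i - q_{ji}x_ix_j$, while in Case 2 I would set $\deg(x_i) = 1$. With $A_0 = k$ and, for $d \geq 1$, $A_d$ the set of all finite sums of words $y_1 \cdots y_r$ whose factors lie in $k \cup \{t_1,\dots,t_m,x_1,\dots,x_n\}$ and whose total weighted degree is at most $d$, one then checks filtration compatibility case by case. For the $t$-relations, both $t_j t_i$ and $p_{ji} t_i t_j$ have degree $2$ while the right-hand side lies in $A_1$. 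For the $x$-relations, the verifications are already done in the proof of Theorem \ref{5.4}. The new point is the cross relations $t_i x_j - c_{ji} x_j t_i \in k + kt_1 + \cdots + kt_u + kx_1 + \cdots + kx_v$: in Case 1 with $(u,v) = (i,j-1)$ one needs $1 + M^{j-1} < 1 + M^j$, with $(u,v) = (i-1,j)$ a similar inequality, both of which hold; in Case 2 one needs $1 < 2$. The resulting $\mathrm{gr}(G)$ is generated over $k$ by images $T_i, X_j$ satisfying $T_j T_i = p_{ji} T_i T_j$, $X_j X_i = q_{ji} X_i X_j$, and $T_i X_j = c_{ji} X_j T_i$, with the normalizing conditions from (4) and (5) preserved. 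Lemma \ref{5.3}, applied jointly to all $m+n$ generators, then shows $\mathrm{gr}(G)$ is noetherian whenever $k$ is, so $G$ is noetherian.

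For parts (ii) and (iii), I would follow the pattern of Theorem \ref{5.4}(ii)--(iii) almost verbatim. Given a finite subset $F \subset G$, fix a representation of each element of $F$ as a finite sum of words in $k \cup \{t_1,\dots,t_m,x_1,\dots,x_n\}$ and collect all $k$-coefficients that arise, together with all $p_{ji}, q_{ji}, c_{ji}$ and their inverses, and all the finitely many constants $b_{ij}, c_{ij}, d_{ij}, e_{ij}$ coming from the normalizing equations of conditions (4)--(5) for both the $t$'s and the $x$'s. Writing each such element of $k$ as a word in $A \cup \{z_1,\dots,z_m\}$ via $k = A[z_1,\dots,z_m]$, collect all the $A$-coefficients occurring; by F-noetherianity (resp.\ directed F-noetherianity) of $k$ they lie in a noetherian subring $A^* \subseteq A$, chosen functorially in $F$ for part (iii). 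Setting $k^* = A^*[z_1,\dots,z_m]$ and $G^* = k^*[t_1,\dots,t_m,x_1,\dots,x_n]$, Lemma \ref{lem5.5} applied twice gives the normalizing conditions $t_i k^* + k^* = k^* + k^* t_i$ and $x_j k^* + k^* = k^* + k^* x_j$, so $G^*$ is a well-defined subring of $G$ containing $F$ and satisfying the hypotheses of (i) with $k$ replaced by $k^*$. Hence $G^*$ is noetherian, establishing (ii); the functorial choice for (iii) then yields directedness.

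The main obstacle I expect lies in part (i): one must verify that the chosen weights make all three families of relations (the $t$-relations, the $x$-relations, and the cross relations) simultaneously strictly subordinate under the filtration, and that the normalizing hypotheses (4)--(5) lift intact to $\mathrm{gr}(G)$ so that Lemma \ref{5.3} can be applied to the full set of generators in a single stroke rather than via a more delicate iterated filtration. Parts (ii)--(iii) are then largely bookkeeping, being transcriptions of the arguments already in place for Theorems \ref{5.4} and \ref{5.8}.
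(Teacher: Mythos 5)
Your proposal is correct and follows essentially the same route as the paper: the paper proves Theorem 5.9 by repeating the filtration argument of Theorem 5.8 with the degrees adjusted to $\deg(t_i)=1$ (and the case-appropriate weights on the $x_j$), reducing $\mathrm{gr}(G)$ to Lemma 5.3, and handling (ii)--(iii) by the coefficient-collection argument of Theorem 5.4(ii)--(iii) with Lemma 5.5. Your explicit degree checks and the two applications of Lemma 5.5 are exactly the details the paper leaves to the reader.
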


The proof  is very similar to the proof of Theorem \ref{5.8} with the following modifications 
on degrees where for all $i, j$, we let $\text{deg}(t_i)=1$ and $\text{deg}(x_j)=1$ as in the proof of Theorem \ref{5.8}.

\begin{remark} If a ring $R$ has a non-negative filtration whose associated graded ring gr$(R)$ is F-noetherian, the ring $R$ may fail to be F-noetherian.
The failure may occur even if gr$(R)$ is commutative.
\end{remark}
\subparagraph{Example.} Let $S= \mathbb{Z} [x_0, x_1, \dots , x_n, \dots]$ be the commutative ring in infinitely many variables.  Let $R=S[y; d]$ be the differential Ore extension where $d$ is the derivation of $S$ such that� $d(x_i)=x_{i+1}$. �So $yx_i �- x_i y = x_{i+1}$ for all $i$.
Then $R$ is not noetherian because $R=S[t; d]$ is a free $S$-module and $S$ is not noetherian. 
Moreover, $\{x_0 , t\}$ with $1$ generate $R$ as a ring.  Hence $R$ is not F-noetherian.
We make the standard filtration on $R$ induced by $\text{degree}(y) =2$ and 
$\text{degree}(x_i) =1$ for all $i$. More precisely, $A_0= \mathbb{Z}$ and 
$ A_m �= �(Zy + Zx_0 + Zx_1 + \cdots +Zx_n+� \cdots)^ m $ for $m \geq 1$.
Let $y^* = y+ A_0$,� and let �$x_i^*= x_i + A_0$.  Since ��$y x_i �- x_i y = x_{i+1}$ for all $i$,
It follows that� $y^*.x_i^*= x_i^*.y^*$ for all $i$ and all $x_i^*$ commute.
Hence gr$(R)$� is commutative so gr$(R)$ is F-noetherian even though $R$ is not F-noetherian.  

Finally, we show another filtration on $R$ such that each $A_m$  is finite dimensional and the associated graded ring gr$(R)$ is still commutative.  We let $d(x_i)=i$ and $d(y)=2$, and modify $A_m$ accordingly to be the $\mathbb{Z}$  finite sum of monomials  $t_{i_1} t_{i_2}\dots  t_{i_k} $ where each $t_i  \in \{y, x_0, x_1, \dots , x_n, \dots\}$ such that
\[d(t_{i_1}) + d(t_{i_2}) + \cdots  + d(t_{i_k})  \leq  m.\]
Then each $A_m$  is evidently finite dimensional, and the associated graded ring gr$(R)$ is still commutative even though $R$ is not F-noetherian.        

\vspace{2em}
\section{Applications and Examples} \label{App}

\textit{Throughout this section, we shall say that $x$ and $y$ almost commute in a ring $R$ if $xy=qyx$ for some unit $q\in R$. In this case, we also say that $q$ is the supporting constant of $xy$.}

For the convenience of the reader, we recall Corollary 5.6.

\paragraph{Corollary 5.6} Let $R = S[x_1, x_2, \dots, x_n]$ be a finitely generated ring over a subring $S$ 
where each $x_i$ commutes with the elements of $S$, and 
for all $j > i$,�there exist units $q_{ji}� \in S$ such that we have one of the following cases.
\begin{description}
\item[Case 1:]  $x_j x_i� - q_{ji}�x_i x_j� \in   S[x_1, \dots, x_{j-1}] + S.x_j $
\item[Case 1':]  $ (x_jx_i� - q_{ji}� x_i x_j)� \in   Sx_i + S[x_{i+1}, \dots , x_n]$ 
\item[Case 2:]   $x_j x_i� - q_{ji}�x_i  x_j� \in  S + Sx_1+ \cdots + Sx_n$

\item[Case 3: ]  $(x_j x_i� - q_{ji} x_i x_j) = f + g$  where $f  \in S + Sx_1+ \cdots + Sx_n$ and if $i >1 $, $g$ is a finite sum of quadratic monomials  $s x_a x_b $ where $s \in S$ and either  $a$ or $b$ is at most $i-1$; however if $i=1$,  $g$ is a finite sum of quadratic monomials $s x_a x_b$ where $s \in S$ and $a=1$ and $b<j$ or vice versa ($b=1$ and $a<j$). 

\end{description}

Then 
\begin{enumerate}[(i)]
\item If $S$ is noetherian, then so is $R$.
\item If $S$ is F-noetherian, then so is $R$.  
\end{enumerate}
\begin{example}
Consider the following $k$-algebra $R$ over a ring $k$ in the variables $x, y, z$ with relations
\begin{align*}� 
&w(x,y, z) =0 \\
&xy =�� q_1 yx� + f(y,z)�� + p_1 x \\
&xz =�� q_2 zx +� g(y,z) + p_2 x \\
&zy + yz =�� h(z)�� + p_3 y\\
\end{align*} for some polynomials, $f(y, z), g(y, z) \in k[y,z]$, $w(x,y,z) \in k[x,y,z]$, each $p_i \in k$, and each $q_i$ is a unit of $k$.� 

Then if $k$ is noetherian or $k$ is F-noetherian, so is $R$.  To see this, let $R^{+}$ be the $k$-algebra with the last three given relations. Then $R$ is a homomorphic image of $R^{+}$.� Since the noetherian and F-noetherian properties are preserved by homomorphic images, it suffices to work with $R^{+}$.� Then Corollary \ref{cases} applies to $R^{+}$ with the ordering shown in� $R^{+}=k[z, y, x]$.
\end{example}
\begin{example} 
Let $R$ be the $k$-algebra generated by the variables $x_i, y_i, z_i$, $1 \leq i \leq n$ such that any pair of variables almost commute (whose supporting constants are units in $k$) except for the following pairs where we have
\[x_i y_i + q_i y_i x_i=  f_i + p_i x_i,\] 
where $f_i \in k[y_1, \dots, y_n, z_1, \dots, z_n, x_1,  \dots, x_{i-1}]$
for some units $q_i \in k$, some $p_i \in k$.  Then if $k$ is noetherian or $k$ is F-noetherian, so is $R$.  This follows from Corollary \ref{cases} with the ordering shown in   
\[R=k[y_1, \dots, y_n, z_1, \dots, z_n, x_1, \dots, x_n]\]
\end{example}

\begin{example} \emph{The Woronowicz algebra over an F-noetherian ring k}
  
See [W] or [L-R, p. 1215].  Let $R$ be the $k$-algebra generated by $x, y , z$ subject to the relations
\begin{align*}
&xz-v^4zx= (1+v^2)x\\ 	
&xy-v^2yx= vz\\		
&zy- v^4 yz= (1+v^2) y
\end{align*}
where $v$ is a unit of $k$ (which is not a root of unity). Then if $k$ is noetherian or $k$ is F-noetherian, so is $R$. This follows from Corollary \ref{cases} (case 2).
\end{example}
\begin{example}\emph{A generalization of the multi-parametrized quantum Weyl algebra over an F-noetherian ring k.} 

For the multi-parametrized \emph{quantum Weyl algebra} over a field $k$, see  [L-R, p. 1218].  Let $R$ be the $k$-algebra generated by the variables $x_i, y_i$,   $1 \leq i \leq n$ such that any pair of variables almost-commute (whose supporting constants are units in $k$) except for the pairs $(x_i, y_i)$ for all $i$, we have the relations, for all $i$,
\[y_i x_i  -  q_i x_i y_i = f_i + p_i y_i\] 
and \\
$$ \text{case 1: } f_i  \in  k [x_1, y_1, \dots, x_{i-1} , y_{i-1}, \bf{x_i}] $$
$$  \text{case 2: } f_i   \in  k [\mathbf{x_i} , x_{i+1}, y_{i+1}, \dots, x_n, y_n] $$
for some units $q_i \in k$ and some elements $p_i \in k$.  

Then if $k$ is noetherian or $k$ is F-noetherian, so is $R$.  The first case follows from Corollary \ref{cases} with the ordering shown in  

                      \[R=k[x_1, y_1, \cdots,  x_i, y_i, \cdots , x_n,y_n]\]

The second case follows from \ref{cases} with ordering shown in

                      \[R=k[x_n, y_n, \cdots,  x_i, y_i, \cdots , x_1,y_1]\]
\end{example}
\begin{example} \emph{A generalization of the Hayashi algebra over an F-noetherian ring $k$}.

For the Hayashi algebra over a field $k$, see [H] or [L-R, p. 1217]. Let $R$ be the $k$-algebra generated by the variables $x_i, y_i, z_i$,    $1 \leq i \leq n$ such that any pair of variables almost-commute (whose supporting constants are units in $k$) except for the pairs $(x_i, z_i)$ where we have the relations  
                       
                                        \[ (z_i x_i + qx_i z_i) y_i  = 1 =   y_i (z_i x_i + qx_i z_i) \]

for some unit $q \in S$. Observe that the above relation implies that
                                   
                                         \[ (y_i)^{-1} = z_i x_i + q x_i z_i .\]

Then if $k$ is noetherian or $k$ is F-noetherian, so is $R$. This follows from Corollary \ref{cases} by considering  
\[R=k[(y_1)^ {\pm 1},  \dots,  (y_n)^ {\pm 1}, x_1, \dots, x_n,  z_1, \dots, z_n].\]
(Note that if $xy=qyx$, then $xy^{-1}=qy^{-1}x$ if $q$ commutes with $x$ and $y$) .   
\end{example}
\begin{example}\label{6.6} \emph{On some quadratic algebras}. 
 
Let $R$ be the $k$-algebra generated by the variables $x_i$ with $1 \leq i \leq 5$ such that any pair of variables almost-commute (whose supporting constants are units in $k$) except for the pairs $(x_5, x_4)$ 
where we have the relation
\[x_5 x_4  -  q x_4 x_5 = p_1 x_5x_3 + p_2x_5x_2 +p_3(x_3)^2 \]

for some unit $q \in k$ and some constants $p_i \in k$ for  $i=1, 2, 3$.
Now we can apply Theorem \ref{5.4} to see that if $k$ is noetherian or F-noetherian, then so is $R$.
\end{example}
\begin{example}\label{enveloping} \emph{The Quantum enveloping algebra of $U_q(sl(2,  k)$ over an F-noetherian ring $k$}.

$U_q(sl(2, k))$ is defined as the $k$-algebra generated by $x, y, z, z^{-1}$ with relations 
\begin{align*}
&xz = q^{-2}zx\\       
&yz = q^2 zy\\
&xy - yx = (q - q^{-1}) ^{-1} (z - z^{-1})
\end{align*}                       
where $q \in k $ and $q\neq  \pm 1$.  
Then if $k$ is F-noetherian, so is $ U_q(sl(2, k)) $.

We can see this in many ways. For example, we may apply Theorem \ref{5.4} on 
$U_q(sl(2, k)) = S[x, y]$ where  $S=k[z, z-1]$. 
More precisely, since $x$ and $y$ are in the centralizer of $k$, and $xz = q^{-2}zx$, it follows that $xS+S=S+Sx$.  Similarly, $yS+S=S+Sy$.   
Here we can apply Theorem \ref{5.4} to see that if $k$ is F-noetherian, so is $U_q(sl(2, k)) $.
For another proof, see (\ref{6.8}) below.
\end{example}
\begin{secc}\label{6.8}
We can also prove \ref{6.6} by applying Prop. \ref{multi} part (ii) on $U_q(sl(2, k)) = k[x, y, z]Z^{-1}$ as a localization with respect to $Z = \{z^i\text{ with }i \in \mathbb{N}\}$ 

More precisely, since $xz = q^{-2}zx$, $yz = q^2 zy$,  $q \in k$ and the centralizer of $k$ contains  $Z \cup \{x, y,z\}$, it follows that  $Z$ satisfies all hypotheses of Proposition \ref{multi} part(ii) 
with the notation $R= k[x, y, z]$ and $Q=\{z\}$. Thus, if $k$ is F-noetherian, so is $ U_q(sl(2, k)) $.    
\end{secc}
\begin{example} \emph{A generalization of $U_q(sl(2))(k)$}.   

Let $U$ be the $k$-algebra generated by $\{x, y, z, z^{-1}\}$ with the relations 
\begin{align*}
&xz =  q_1zx,\\
&yz = q_2 zy,\text{ and}\\
&xy - yx =   f(z, z^{-1} , x)  + q_3 y,
\end{align*}
where $f(z, z^{-1} , x)  \in  k[z, z^{-1}, x]$ and each $q_i$ is a unit of $k$.
Then, verbatim, as in the proof (\ref{6.6}), we can apply Theorem \ref{5.4} on $U=S[x, y]$  where $S= k[z, z^{-1}]$ 
to see that, if $k$ is noetherian or F-noetherian, so is $U$.
\end{example}
\begin{example}\label{deformation} \emph{Witten deformation of  $U_q(sl(2))$ over an F-noetherian ring}.    
E. Witten  (see [L-R], p. 1217) introduced and studied a 7-parameter deformation of  the universal enveloping algebra $U(sl(2, k))$ depending on a 7-tuple of parameters  $\mathbf{\xi}= (\xi_1,\dots, \xi_7)$ 
and subject to the relations 
\begin{align*}
&xz - \xi_1zx = \xi_2 x,\\
&zy - \xi_3yz = \xi_4 y\text{, and}\\ 
&yx - \xi_5 xy = \xi_6 z ^2 + \xi_7z  
\end{align*}
In the usual case where $k$ is a field, it is assumed that  $\xi_1, \xi_2,  \xi_3 \neq 0$.
 
In our general case we assume that $\xi_1, \xi_2,   \xi_3$ are units in $k$ and the resulting algebra is denoted by $W(\xi)(k)$.   If we apply Corollary \ref{cases} with the ordering shown in $W(\xi)(k)= k[x, z, y]$ or $W(\xi)(k) = k[z, x, y]$, we see that if $k$ is noetherian or  F-noetherian, so is $W(\xi)(k)$.
\end{example}

\vspace{2em}
\section{Some Quantum Groups over F-noetherian Rings.}\label{some}

Recall that we have discussed in (\ref{enveloping}) -(\ref{deformation}) some variants of the Quantum enveloping algebra of $U_q(sl(2,  k)$ over an F-noetherian ring $k$.  

In this last section, we give many examples of some quantum groups over F-noetherian rings.  

\begin{secc}\label{7.0} For convenience, a skew quantum  \textit {fully triangular} extension over $k$ shall mean a  $k$-algebra
$k[t_1,  \dots , t_m]$ generated by the variables  $\{t_1,  \dots , t_m\}$ such that for all $j > i$,�there exist units $p_{ji} \in k$  such that   
\[ t_j t_i� - p_{ji}� t_i t_j� \in   k[t_1, \dots, t_{j-1}] + k.t_j.\]
 \end{secc}
 \begin{secc}\label{7.1} \textbf{A generalization of the quantum group $\mathcal{O}_q(M_n(k))$  and its multi-parameter version.}

Recall that the quantum group $\mathcal{O}_q(M(n, k))$ is the k-algebra generated by all variables $x_{ij}$ with $1 \leq i,j \leq n $ such that each $2 \times 2 $ square matrix of four generators at the positions
$(i,j) , (i, j^\prime ), (i^\prime ,j),(i ^\prime , j ^\prime )$  where $i < i^\prime $  and $j < j^\prime $ generate a copy of 
the well-known quantum group $\mathcal{O}_q (M(2, k))$ of  $2 \times 2 $ matrices
[B-G, I.2.2].  \\
   
Let  $ G(\mathcal{O}_q (M_n(k)) =   k[t_1,  \dots , t_m,  x_{11},  \dots , x_{nn}]$ be the k-algebra withe evident variables such that 
\begin{enumerate}[(i)]
\item $k[x_{11},  \dots , x_{nn}] = \mathcal{O}_q(M_n(k)) $ (with $q$ being a central unit of $k$) is the $k$-algebra generated by the variables $x_{ij}$  with $1 \leq i, j \leq n$ and with  the standard relations of the quantum group $\mathcal{O}_q(M_n(k))$  of  $n \times n$ matrices.  \textit{See [B-K, p. 16]}. 

\item For all $j > i$,there exist units $p_{ji} \in k$  such that   
\begin{description}
\item[Case 1: ]  $ t_j t_i� - p_{ji}� t_i t_j� \in   k[t_1, \dots, t_{j-1}] + kt_j $
\item [Case 2: ]  $ t_j t_i� - p_{ji}� t_i t_j� \in   k +k t_1+  \cdots + kt_m $
\end{description}

\item Let $\{x_1,  \dots , x_{n^{2}}\}$ be the lexicographic ordering of $ \{x_{11},  \dots , x_{nn}\}$. 
For all $j$ and $i$, we also assume there exist units $c_{ji}$ in $k$ such that 
\[t_ix_j� - c_{ji}�x_j  t_i  \in  k + kt_1+ \cdots + k t_u+ kx_1+ \cdots + kx_v\]
\end{enumerate}
where in case 1,  $(u, v)=(i, j-1)$ or $(i-1, j)$, while in case 2, $(u,v)= (m,n^{2})$  
 
Note that our given ring  can be written as   $ G(\mathcal{O}_q (M_n(k)) =   k[t_1,  \dots , t_m] [  x_{11},  \dots , x_{nn} ]$

Then, if $k$ is noetherian, or F-noetherian, or directed F-noetherian, 
so is $ G(\mathcal{O}_q(M_n(k))) $. 
\end{secc}

\begin{proof}
 This follows immediately from Theorem \ref{5.8} and Theorem \ref{5.9} since the ring extension $\mathcal{O}_q(M_n(k))$ is fully triangular in the sense of (\ref{7.0}).  \emph{See [B-K, p. 16] for details}.
 
\emph{Note}.  Similarly, we may consider a generalization of the multi-parameter version of  
$\mathcal{O }_q(M_n(k))$ where the parameters are central units of $k$. (See [B-K, p.16]).
 \end{proof}

 \begin{secc}\label{7.2}
 \textbf{A generalization of the quantum groups $\mathcal{O}_q(\mathrm{SL}_n(k))$ and 
$\mathcal{O}_q(\mathrm{GL}_n(k))$}

Let  $G(\mathcal{O}_q(M_n(k)) )$ be as in (\ref{7.1}) except that we replace condition (3) with the stronger condition 
\begin{itemize}
\item[(3)'] each $t_i$ commutes with each $x_j$.
\end{itemize}
Let  $D_q $ be the quantum determinant of $\mathcal{O}_q(M_n(k)) $, so  $D_q$ is central in $G(\mathcal{O}_q(M_n(k)) )$  
\text {[B-K p. 17]}. 
Hence $D_q$  is also central in $G(\mathcal{O}_q(M_n(k)) )$ by condition (3)' above.
So we can form
\begin{align*}
G(\mathcal{O}_q(\mathrm{SL}_n(k)))  &=   G(\mathcal{O}_q(M_n(k)))  / [ D_q - 1]\\
G(\mathcal{O}_q(\mathrm{GL}_n(k)))  &=   G(\mathcal{O}_q(M_n(k))) [(D_q)^{ - 1}] 
\end{align*}

If $k$ is noetherian, or F-noetherian, or directed F-noetherian, 
then so is $ G(\mathcal{O}_q(\mathrm{SL}_n(k))) $ and  $ G(\mathcal{O}_q(\mathrm{GL}_n(k)))  $.  
 \end{secc}
 \begin{proof}
 For $ G(\mathcal{O}_q(\mathrm{SL}_n(k)))  =   G(\mathcal{O}_q(M_n(k)))  / [ D_q - 1] $, this follows immediately from (\ref{7.1}) since the noetherian, the F-noetherian, and the directed F-noetherian properties are preserved under homomorphic images (\ref{3.4}). 
 
For $ G(\mathcal{O}_q(\mathrm{GL}_n(k)))  =   G(\mathcal{O}_q(M_n(k))) [(D_q)^{-1}] $, this follows immediately from (\ref{7.1}) since  the noetherian, the F-noetherian, and the directed F-noetherian properties are preserved under localizations by a central subset (see \ref{multi}).
 \end{proof}
 \begin{secc} \textbf{A generalization of the quantum symplectic algebra $ \mathcal{O}_q(\mathrm{SP}_n(k)))$ over an F-noetherian ring $k$}.

Recall that the algebra  $\mathcal{O}_q(\mathrm{SP}_n(k)))$ is the $k$-algebra generated by the variables 
$x_i, y_j$  with $i, j = 1, \dots , n$  subject to the relations
\begin{align*}
&y_j  x_i = q^{-1}x_i y_j,  \hspace{3mm}  yj yi = q y_i y_j, \hspace{3mm}   1 \le i < j \le n\\
&x_j x_i = q^{-1} x_i x_j,  \hspace{3mm}   x_j  y_i = q y_i x_j, \hspace{3mm}  	 1 \le i < j \le n \\
&x_i y_i - (q^2)y_i x_i   = ( q^2 - 1)\sum_{t=1}^{i-1} q^{i-t} y_t  x_t,  \hspace{3mm} 	1 \le i \le n
\end{align*}
Then $\mathcal{O}_q(\mathrm{SP}_n(k)))$ is a triangular extension as in (\ref{7.0}) via the ordering shown in
 $k[x_1, y_1, x_2, y_2, \dots, x_n, y_n]$.   

Again, we can generalize $\mathcal{O}_q(\mathrm{SP}_n(k)))$ to $G(\mathcal{O}_q(\mathrm{SP}_n(k))))$ exactly as in (\ref{7.1}).

In conclusion, if $k$ is noetherian or F-noetherian or directed F-noetherian, then so is 
$G(\mathcal{O}_q(\mathrm{SP}_n(k))))$.  

 \end{secc}
 \begin{remark}
 Let  $S$  be a ring containing a field $k$ of characteristic 0, and let $q$ be a unit of $k$.  
Then, by extension of scalers,  we may form the quantum enveloping algebra $U_q(L)(S)$   
of a simple Lie algebra $L$  from the $k$-algebra $U_q(L)(k)$ and we can form the quantum group 
$\mathcal{O}_q(G)(S)$ of a connected semi-simple algebraic group G from the $k$-algebra  
$\mathcal{O}_q(G)(k)$  (see [B-G, chapters 1.6, 1.7]).
It is well-known that $U_q(L)(k)$  is noetherian [B-G, p. 55] and  that  $\mathcal{O}_q(G)(k)$ is also noetherian [B-G, p. 78]. 
\vspace {1mm} \\ 
\textbf {Problem}.  I expect a positive answer to the problem whether if $S$ is noetherian or F-noetherian or directed F-noetherian, then so are $U_q(L)(S)$  and $\mathcal{O}_q(G)(S)$. 
\end{remark}

\vspace{2em}
 \section{Examples of F-noetherian Matricial Rings and F-noetherian Group Algebras.}\label{matricial}
 In this last section, we give many matricial examples and many group algebras examples of F-noetherian rings where some examples are tightly F-noetherian  and some are non-tightly F-noetherian. 
 \begin{prop} Let $M_n(R)$ be the ring of $n\times n$ matrices over a ring $R$.
\begin{enumerate}[(i)]
\item \textbf{Examples of tightly F-noetherian rings.}  
If $R=\mathbb{Z}$,  then $M_n(R)$ is tightly F-noetherian. More generally, $M_n(R)$ is tightly F-noetherian if $R$ a ring which is a module-finite ring extension of either $\mathbb{Z}$ or $R$ is a prime field $\mathbb{Z}_p$. 
(Interesting examples of  such coefficient rings $R$ in (iv) are the finite fields or the ring of algebraic integers of a number field (since it has a finite basis over $\mathbb{Z}$)  [Ma, p. 30]).
\item \textbf{Example of an F-noetherian ring that is not tightly F-noetherian.}  
$M_2(\mathbb{Z}[x])$ is noetherian but not tightly F-noetherian.
\item \textbf{Example of an F-noetherian ring that may not be tightly F-noetherian.}  
If $R$ is an F-noetherian ring, then so is $M_n(R)$.
\end{enumerate}
\end{prop}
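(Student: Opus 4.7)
The plan is to treat the three parts in increasing order of difficulty, with (iii) first as a template, then (i), and finally (ii) where the only nontrivial construction lives.

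For (iii), given a finite subset $X \subseteq M_n(R)$, I would let $E(X) \subseteq R$ be the finite set of entries of the matrices in $X$; since $R$ is F-noetherian, some noetherian subring $R_0 \subseteq R$ contains $E(X)$. Then $M_n(R_0)$ is noetherian (matrix rings preserve the noetherian property) and contains $X$, so $M_n(R)$ is F-noetherian. For (i), take a finite $X \subseteq M_n(R)$ and let $R_0 \subseteq R$ be the subring generated by the entries of $X$ together with $1$. In both subcases of the hypothesis, $R$ is a finitely generated $\mathbb{Z}$-module --- either directly, or because a module-finite extension of a finite prime field is a finite ring. Since $\mathbb{Z}$ is noetherian, the submodule $R_0$ is also a finitely generated $\mathbb{Z}$-module, and hence so is $M_n(R_0)$. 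The subring $S \subseteq M_n(R)$ generated by $X$ sits inside $M_n(R_0)$, so $S$ is a $\mathbb{Z}$-submodule of a noetherian $\mathbb{Z}$-module, hence itself finitely generated as a $\mathbb{Z}$-module and therefore noetherian as a ring. This is exactly the tightly F-noetherian condition for $M_n(R)$.

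Part (ii) carries the main content. Noetherian-ness of $M_2(\mathbb{Z}[x])$ is the Hilbert basis theorem combined with Morita-invariance of the noetherian property. For the negative half I would exhibit the ``triangular'' subring
\[
T = \begin{pmatrix} \mathbb{Z} & \mathbb{Z}[x] \\ 0 & \mathbb{Z}[x] \end{pmatrix},
\]
which is finitely generated as a ring (for instance by the four matrices $e_{11}$, $e_{12}$, $e_{22}$ and $x\, e_{22}$). The decisive computation is
\[
\begin{pmatrix} a & b \\ 0 & c \end{pmatrix}\begin{pmatrix} 0 & f \\ 0 & 0 \end{pmatrix} \;=\; \begin{pmatrix} 0 & af \\ 0 & 0 \end{pmatrix},
\]
so the left $T$-action on the ideal $J = \begin{pmatrix} 0 & \mathbb{Z}[x] \\ 0 & 0 \end{pmatrix}$ factors through the projection $T \to \mathbb{Z}$ onto the $(1,1)$-entry. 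Hence $J$, as a left $T$-module, is isomorphic to $\mathbb{Z}[x]$ viewed as a $\mathbb{Z}$-module, which is not finitely generated; so $T$ is not left noetherian, and $M_2(\mathbb{Z}[x])$ therefore fails to be tightly F-noetherian.

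The main obstacle is the construction in (ii): one needs a finitely generated subring of a noetherian matrix ring which is itself not noetherian, and any such example must break the Morita equivalence between $M_2(\mathbb{Z}[x])$ and $\mathbb{Z}[x]$. This forces an asymmetric (triangular-type) choice such as $T$ above. Parts (i) and (iii) are essentially bookkeeping once one isolates the key facts --- for (i) that submodules of a finitely generated module over a noetherian base ring are again finitely generated, and for (iii) that a noetherian hull of the entries pulls back to a noetherian matrix hull of the original finite subset.
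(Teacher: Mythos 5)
Your proposal is correct and follows essentially the same route as the paper: parts (i) and (iii) are argued just as in the text (subrings of the matrix ring are $\mathbb{Z}$-submodules of a finitely generated $\mathbb{Z}$-module, respectively one passes to a noetherian hull of the entries), and for (ii) you use the same key idea of a finitely generated triangular subring of $M_2(\mathbb{Z}[x])$ whose off-diagonal bimodule $\mathbb{Z}[x]$ fails to be finitely generated over the relevant diagonal piece. The only (harmless) differences are that your subring $\bigl(\begin{smallmatrix}\mathbb{Z} & \mathbb{Z}[x]\\ 0 & \mathbb{Z}[x]\end{smallmatrix}\bigr)$ is the mirror image of the paper's $\bigl(\begin{smallmatrix}\mathbb{Z}[x] & \mathbb{Z}[x]\\ 0 & \mathbb{Z}\end{smallmatrix}\bigr)$ (so you detect failure of the left rather than the right chain condition), and you exhibit an explicit four-element generating set with a direct verification of non-noetherianity instead of the paper's eighteen generators and citation to Lam.
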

\begin{proof} \leavevmode
\begin{enumerate}[(i)]
\item Every subring $A$ (with 1) of $M_n(\ZZ)$  is noetherian because $A$ is a finitely generated $\ZZ$-module. Hence $M_n(\ZZ)$ is tightly F-noetherian.
 
In general, let $A$ be any subring of $M_n(S)$. Then $\ZZ \subset A \subset M_n(S)$.  Now $M_n(S)$ is evidently a finitely generated $\ZZ$-module and $A$ is a $\ZZ$-submodule of $M_n(S)$. Hence $A$ is noetherian. Hence $M_n(S)$ is tightly F-noetherian. The second case (with $\ZZ_p$ instead of $\ZZ$) is very similar.   
\item The matrix ring $M_2(\ZZ[x])$ is noetherian since it is a finitely generated $\ZZ[x]$-module and $\ZZ[x]$ is noetherian. But this ring is not tightly F-noetherian as follows. Take the following set $X$ of 18 upper triangular $2\times 2$ matrices listed by rows as: 
\[X:= \bigg\{\left( \begin{array}{cc}
a & b \\
0 & c\\
\end{array} \right) \bigg\vert  \hspace{0.5em}a, b \in \{0, 1, x\}\text{ and �}c \in \{0, 1\}\bigg\}.\] Then the subring generated by $X$ is exactly the subring $A$ of $2\times 2$ matrices of the form \[\left( \begin{array}{cc}
\ZZ[x] & \ZZ[x] \\
0 & \ZZ \\
\end{array} \right)\]�with no relation among the entries.  But $\ZZ[x]$ is not a noetherian $\ZZ$-module. So $A$ is not noetherian [L 1, (1.22)]. Hence $M_2(\ZZ[x])$ is not tightly F-noetherian (while it is F-noetherian). 
\item Let $A$ be a finite subset of $M_n(R)$. Then the finitely many entries of the elements of $A$ are contained in a noetherian subring $R'$ of $R$. Hence the subring $M_n(R')$ is noetherian (and containing the finite subset $A$).  In general, such matricial rings may fail to be tightly F-noetherian (see Example (x) below).
In general, $M_n(R)$ may fail to be tightly F-noetherian by (ii).
\end{enumerate}
\end{proof}

\begin{secc}  
\begin{enumerate}
\textbf{Examples from Group Algebras}  

\item \textit{Example of a tightly F-noetherian ring}.  Let $Z[G]$  be the group algebra of a locally finite group $G$ over $Z$. Then $Z[G]$ is tightly F-noetherian. To see this, let $X$ be a finite subset of $Z[G]$. Then $X$ involves finitely many basis elements of $G$ which generate a finite subgroup $G'$ of $G$. Now $Z[G']$ is a finitely generated $\mathbb{Z}$-module. Hence the subring generated by $X$ is noetherian being a sub $\mathbb{Z}$-module of  $\mathbb{Z}[G']$.  Hence $\mathbb{Z}[G]$  is tightly F-noetherian.

\item \textit{Example of an F-noetherian rings which is  not tightly F-noetherian}.
Let $K$ be a noetherian domain, and let $G$ be a free group with at least two generators.  Note that $K$ can be embedded in a division ring $D$ [L 2, (10.23)].  Now let $R$ be any division hull of the group $K$-algebra $K[G]$ which exists since $D[G]$  has a division hull where one such division hull is the Malcev-Neumann construction for an ordered group $G$ over $D$ because free groups are ordered ([L 1, (6.31), (14.24)] or [Passman 1]).  Then $R$ is F-Noetherian since it is a division ring. Now let $G'$ be the free subgroup on two generators $\{a, b\}$. Then the subring $\mathbb{Z}[G']$  is not noetherian because, for example, the free algebra $\mathbb{Z}[ X]$ with $|X| \geq 2$ is not noetherian as in [L 2, (1.31)]).  Here $\mathbb{Z}[G'] = \mathbb{Z}[ X, X^{-1}]$. Hence $R$ (which is any division hull of $K[G]$) is not tightly F-noetherian although $R$ is F-noetherian.

\item \textit{Example of an F-noetherian rings which is  not tightly F-noetherian}.
A slight variation of the preceding example is when $R$ is any division hull of the free $k$-algebra $K[ X]$ with $|X|\geq 2 $. (Note that $K[ X]$ is a proper subring of $K[G]$ where $G$ is the free group generated by $X$). Then $R$ is F-noetherian. Moreover, if  $\{a, b\}$  is a two-element subset of $X$, then as above, the free subring $\mathbb{Z}[ a, b] $ is not noetherian.  Hence $R$ (which is any division hull of $K[X]$) is not tightly F-noetherian although $R$ is F-noetherian.

\item \textit {Example}.  Let $K[G]$  be the group algebra of a locally finite group $G$ over an F-noetherian ring $K$. (Recall that a group $G$ is locally finite group if every finitely generated subgroup is finite).   (Interesting examples of $G$ are the finitary symmetric/alternating groups on an infinite set). Then $K[G]$ is F-noetherian. To see this, let $X$ be a finite subset of $K[G]$. Then we have 
the elements in $X$ involve finitely many basis elements of $G$ which generate a finite subgroup $G'$ of  $X$, and 
the elements in $X$ involve finitely many elements of $K$ which are contained in a noetherian subring $K'$ of $K$, such that $X$ is contained in $K'[G']$. Now $K'[G']$ is a finitely generated $K'$-module. Hence $K'[G']$ is a noetherian ring containing $X$. Hence $K[G]$ is F-noetherian.  

\item \textit {Example}. Let $K[G]$  be the group algebra of a polycyclic-by-finite group $G$ (for example $G$ is a finitely generated nilpotent group)  over an F-noetherian ring $K$.  Recall that a group $G$ is polycyclic-by-finite if $G$ has a polycyclic normal subgroup of finite index. Then $K[G]$ is F-noetherian as follows. Let $X$ be a finite subset of $K[G]$. Then the coefficients of the elements of $X$ form a finite subset of $K$ which is contained in a noetherian subring $K'$ of $K$. Now $K'[G]$ is a noetherian ring [J, p. 305] containing $X$.  Hence $K[G]$ is F-noetherian.
\end{enumerate}

\vspace {2mm}
The following Proposition is known; but in the absence of a reference, we give a proof.

\begin{prop}  The direct limits of noetherian rings are precisely the directed F-noetherian rings (as defined in section 3).

\begin{proof} \leavevmode
\textit.  First note that the direct limit of noetherian  rings are evidently the rings which are directed unions of noetherian subrings.
Now every directed F-noetherian ring (as defined in section 3)  is evidently a directed union of noetherian subrings.
\par  Conversely, let $R$  be a directed union of noetherian subrings. 
Then there exists a set  $S$  of noetherian subrings of $R$  which is directed (every two members $R_1, R_2$ of  $S$  are contained in some member  $R_3$  of  $S$) and whose union is  $R$. Now, for every 1-element set  \{a\} , we can choose  $R(a) \in S$  which contains  $a$.
Now suppose inductively that for some natural number  $m \geq 1$  we have have chosen, for every m-element subset  $A$  of  $R$,  an  
$R(A)\in S$ which contains  $A$.  For each $(m+1)$-element set  $A$,  the directedness of  $S$  allows us to choose an  $R(A)\in S$  which contains $R(B)$  for all of the finitely many proper subsets  $B$  of  $A$. Thus induction on  m completes the required construction.  
\end{proof}
\end{prop}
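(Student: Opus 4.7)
The proposal is to prove the two inclusions separately, with the forward direction essentially immediate and the converse being the real content. Throughout I shall identify \emph{direct limit of noetherian rings} with \emph{directed union of noetherian subrings}; this is a standard and purely formal observation (if $R = \varinjlim R_\lambda$ with each $R_\lambda$ noetherian, the images of the $R_\lambda$ form a directed family of noetherian subrings of $R$ whose union is $R$, and conversely any directed union is manifestly a direct limit).

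For the easy direction, suppose $R$ is directed F-noetherian, so that for every finite $A \subset R$ we are given a noetherian subring $R(A) \subset R$ containing $A$, with the monotonicity $X \subset Y \Rightarrow R(X) \subset R(Y)$. Then the family $\{R(A) : A \subset R \text{ finite}\}$ covers $R$ (any $r$ lies in $R(\{r\})$) and is directed: given $R(A)$ and $R(B)$, the ring $R(A \cup B)$ contains both by monotonicity. Hence $R$ is a directed union of noetherian subrings.

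For the converse, let $R$ be a directed union of a family $\mathcal{S}$ of noetherian subrings. The plan is to construct the assignment $A \mapsto R(A) \in \mathcal{S}$ by induction on $|A|$, ensuring $R(A) \supset R(B)$ for every proper subset $B \subsetneq A$ (which in particular implies $R(B) \subset R(A)$ for every $B \subset A$, by an inner induction on $|A \setminus B|$). Base step: for each singleton $\{a\}$, invoke the axiom of choice to pick some $R(\{a\}) \in \mathcal{S}$ with $a \in R(\{a\})$. Inductive step: having defined $R(B)$ for all $B$ with $|B| \leq m$, fix a finite set $A$ with $|A| = m+1$; its proper subsets are finitely many, each of size $\leq m$, so the subrings $\{R(B) : B \subsetneq A\}$ form a finite subcollection of $\mathcal{S}$, and by directedness of $\mathcal{S}$ there exists $R(A) \in \mathcal{S}$ containing all of them (in particular containing $A$, since each $\{a\} \subsetneq A$). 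Another application of the axiom of choice picks one such $R(A)$ for each $A$.

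The main work, if any, is simply the bookkeeping verifying that the construction respects arbitrary inclusions $X \subset Y$ rather than only the covering inclusions $B \subsetneq A$ used in the recursion; this is immediate by telescoping through a maximal chain $X = Z_0 \subsetneq Z_1 \subsetneq \cdots \subsetneq Z_k = Y$ of finite sets, each $Z_i \subsetneq Z_{i+1}$ proper, giving $R(X) = R(Z_0) \subset R(Z_1) \subset \cdots \subset R(Z_k) = R(Y)$. No noetherianness or ring-theoretic content is needed for the construction itself; the only nontrivial ingredient is the axiom of choice applied at each inductive level, which is unavoidable because $R(A)$ is in general not canonically determined.
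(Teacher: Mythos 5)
Your proposal is correct and takes essentially the same route as the paper: the forward direction via monotonicity of $A \mapsto R(A)$, and the converse by an induction on $|A|$ that uses directedness of the family to choose $R(A) \in \mathcal{S}$ containing $R(B)$ for every proper subset $B \subsetneq A$. Your explicit telescoping check that arbitrary inclusions $X \subset Y$ (not just covering inclusions) yield $R(X) \subset R(Y)$ is a small bookkeeping detail the paper leaves implicit, but the argument is the same.
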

\end{secc}

\vspace{1 em}
\section{Open Problems} 
In this paper, our examples of F-noetherian rings can be shown to be directed F-noetherian (being built on directed F-noetherian rings).
We also know that F-noetherian rings are basic.  So we pose the following two problems.  
\begin {enumerate}
\item {\textbf {Problem}.} \textit{Find an example of an F-noetherian ring which is not directed F-noetherian. Or equivalently, find an example of an F-noetherian ring which is not a direct limit of noetherian rings}.  See (8.3).

\item {\textbf {Problem}.} \textit {Find an example of a basic ring which is not F-noetherian}.

\item We briefly recall Problem 7.5.    Let  $S$  be a ring containing a field $k$ of characteristic 0, and let $q$ be a unit of $k$.  
Then, by extension of scalers,  we may form the quantum enveloping algebra $U_q(L)(S)$   
of a simple Lie algebra $L$ and we can form the quantum group $\mathcal{O}_q(G)(S)$ of a connected semi-simple algebraic group G. (see [B-G, chapters 1.6, 1.7]).   I expect a positive answer to the question 
whether if $S$ is noetherian or F-noetherian or directed F-noetherian, then so are $U_q(L)(S)$  and $\mathcal{O}_q(G)(S)$.  
\end {enumerate}

\newpage

\vspace{2em}
\textit{Nazih Nahlus,  Math. Dept.,  American University of Beirut,  Beirut, Lebanon}

\textit{Email:  nahlus@aub.edu.lb}
\end{document}